\newcommand\substitute[2]{{#2}}
\newcommand\justsubstitute[2]{#2}
\newcommand\revise[2]{{#2}}
\title[Preconditioned RDA Method]{Preconditioned
NonSymmetric/Symmetric Discontinuous Galerkin Method for Elliptic
Problem with Reconstructed Discontinuous Approximation}
\author[R. Li]{Ruo Li} \address{CAPT, LMAM and School of Mathematical
Sciences, Peking University, Beijing 100871, P.R. China;
Chongqing Research Institute of Big Data, Peking University, Chongqing
401121, P.R. China}
\email{rli@math.pku.edu.cn}
\author[Q.-C. Liu]{Qicheng Liu} \address{School of Mathematical
Sciences, Peking University, Beijing 100871, P.R. China}
\email{qcliu@pku.edu.cn}
\author[F.-Y. Yang]{Fanyi Yang} \address{College of Mathematics
, Sichuan University, Chengdu 610065, P.R. China}
\email{yangfanyi@scu.edu.cn}
\newcommand{\bm}[1]{\boldsymbol{#1}}
\newcommand{\bmr}[1]{\bm{\mr{#1}}}
\newcommand{\lj}{[ \hspace{-2pt} [}
\newcommand{\rj}{] \hspace{-2pt} ]}
\newcommand{\mb}[1]{\mathbb{#1}}
\newcommand{\mc}[1]{\mathcal{#1}}
\newcommand{\mr}[1]{\mathrm{#1}}
\newcommand{\jump}[1]{\lj #1 \rj}
\newcommand{\aver}[1]{ \{#1\}  }
\newcommand{\wt}[1]{ \widetilde{ #1}}
\newcommand{\wh}[1]{ \widehat{ #1}}
\newcommand{\DGenorm}[1]{ \| #1\|_{\mr{DG}}}
\newcommand{\DGtenorm}[1]{|\!|\!| #1 |\!|\!|_{\mr{DG}}}
\renewcommand{\d}[1]{\mathrm d \boldsymbol{#1}}
\def\dim{\ifmmode \mathrm{dim} \else \text{dim}\fi}
\def\rank{\ifmmode \mathrm{rank} \else \text{rank}\fi}
\def\MTh{\mc{T}_h}
\def\MEh{\mc{E}_h}
\def\MEhB{\mc{E}_h^{\backslash \Gamma}}
\def\un{\bm{\mr n}}
\def\MEhI{\mc{E}_h^I}
\def\MEhB{\mc{E}_h^B}
\def\btau{\bm{\tau}}
\def\lLambda{\overline{\Lambda}}
\newtheorem{assumption}{Assumption}
\newtheorem{theorem}{Theorem}
\newtheorem{lemma}{Lemma}
\newtheorem{remark}{Remark}
\definecolor{orange}{rgb}{1, 0.5, 0}
\begin{document}

\maketitle

\begin{abstract}
  In this paper, we propose and analyze an efficient preconditioning
  method for the elliptic problem based on the reconstructed discontinuous
  approximation method. 
  This method is originally proposed in [Li et al., J. Sci. Comput. 80(1), 2019] 
  that an arbitrarily high-order approximation space with one unknown
  per element is reconstructed by solving a local least squares
  fitting problem.
  This space can be directly used with
  the symmetric/nonsymmetric interior penalty discontinuous Galerkin
  methods. The least squares problem is modified in this paper, which
  allows us to establish a norm equivalence result between the
  reconstructed high-order space and the piecewise constant space.
  This property further inspires us to construct a preconditioner from
  the piecewise constant space. The preconditioner is shown to be
  optimal that the upper bound of
  the condition number to the preconditioned symmetric/nonsymmetric
  system is independent of the mesh size. In addition, 
  we can enjoy the
  advantage on the efficiency of the approximation in number of
  degrees of freedom compared with the standard DG method.
  Numerical experiments are provided to demonstrate the validity of
  the theory and the efficiency of the proposed method.

\noindent \textbf{keywords}: discontinuous Galerkin method;
reconstructed discontinuous approximation; symmetric/ \\ nonsymmetric
interior penalty method; preconditioning; least squares fitting;

\end{abstract}


\section{Introduction}
\label{sec_introduction}
In recent years, there have been extensive studies focused on the
development of discontinuous Galerkin (DG) methods, and now DG methods
are the very standard numerical methods in solving a variety of
partial differential equations; \substitute{}{we refer to
\cite{Cangiani2017hpversion,Cockburn2000development,Riviere2008discontinuous} for some monographs.} 
The DG methods use totally discontinuous piecewise polynomial
approximation space, which gives several advantages over other
types of finite element methods \cite{Antonietti2017uniform}.  
The full discontinuity across the element faces brings the advantage 
of the flexibility on the mesh partition. The DG methods can be easily
applied to the polygonal mesh and the mesh with hanging nodes, see
\cite{Cangiani2014hp,Cangiani2017hpversion,Lipnikov:2013} for some examples.
Such meshes ease the triangulation of complex geometries and curved
boundaries. The implementation of the DG space is easy because the
basis functions are entirely local. 
On the other hand, the DG method may be computationally expensive and
inefficient on the approximation because of the significant increase
in the number of degrees of freedom \cite{Hughes2000comparison}.
This shortcoming is remarkable
especially for the high-order scheme. In addition, the final linear
system is ill-conditioned with the condition number grows like
$O(h^{-2})$ \cite{Castillo2002performance}.

To overcome the issue on the huge number of degrees of freedom, we
proposed a reconstructed discontinuous approximation method, where a 
high-order approximation space is constructed with only one unknown
per element \cite{Li2016discontinuous}. 
The main step of the reconstruction is solving a local least squares
fitting problem on the element patch. This space is a small subspace
of the standard discontinuous piecewise polynomial space, and inherits
the flexibility on the mesh partition. We prove the optimal
approximation estimate, while the number of degrees of freedom is
substantially reduced to the number of elements in the mesh. 
This approach has been successfully applied to a series of classical
problems \cite{Li2017discontinuous,Li2019eigenvalue,Li2020interface,Li2023curl,Li2019least}. 
In this paper, we modify the local least
squares problem in \cite{Li2016discontinuous} by 
adding a constraint at the barycenter. This modification 
essentially brings us the
non-degeneracy to the reconstruction operator, which allows us to
further develop an efficient preconditioner based on the piecewise
constant space. In solving the elliptic problem with the reconstructed
space, the first conclusion from this property is the $O(h^{-2})$
condition number estimate, which is the same as the standard DG
method.

Solving the resulting linear system efficiently is of special concern
in finite element methods, and the most common
method is to construct a proper preconditioner that can significantly
reduce the condition number in the preconditioned system.
The Schwarz method is one of the popular methods for
preconditioning the linear system arising from the DG discretization.  
In \cite{Feng2001additive,Antonietti2007schwarz,Antonietti2011domain,Antonietti2014domain,Barker2011overlapping,Karakashian2017additive}, a wide class of overlapping and
non-overlapping domain decomposition methods are proposed and
analyzed. The condition number of the preconditioned linear system has
the bounds $O(H/\delta)$ and $O(H/h)$, where $H, h$ and
$\delta$ stand for the granularity of the coarse and fine grids and
the size of the overlap, respectively. In addition, there are other kinds
of preconditioners based on the additive Schwarz framework designed
for DG methods, like balancing domain decomposition methods,
balancing domain decomposition with constraints and the auxiliary
space methods, see e.g. \cite{Dryja2008BDD,Dryja2007BDDC,Dobrev2006two,Brix2008multilevel,Antonietti2017uniform}.  Multigrid
methods are also the wildly used approaches to design preconditioners
for the DG discretization. The main idea behind these methods is to
correct the error after a few smoothing iterations on a coarser grid
\cite{Xu2017algebraic}. We refer to
\cite{Gopalakrishnan2003multilevel,Antonietti2017multigrid,Brenner2011multigrid} for geometrical multigrid methods and
\cite{Olson2011smoothed,Antonietti2020algebraic} for algebraic
multigrid methods. 

In this paper, we construct and analyze an efficient preconditioning
method for the reconstructed space in solving the elliptic system.
We first employ the symmetric/nonsymmetric interior penalty DG method
in the numerical scheme. The error estimates under error measurements 
are standard under the Lax-Milgram framework.  The main feature of 
the proposed scheme is that only one degree of freedom is involved per 
element, which first gives a higher efficiency on the finite element
approximation in number of degrees of freedom. We conduct a series of
numerical tests to show that our method can use much fewer degrees of
freedom to achieve the comparable numerical error.
Second, the size of the reconstructed space is always the same as the
piecewise constant space. From this fact, 
we can construct a preconditioner from the piecewise constant space to
any high-order reconstructed space. 
The low-order preconditioning is also a classical
technique in finite element methods for preconditioning the high-order
discretization, see \cite{Chalmers2018low,Pazner2023low,Pazner2020efficient}.
In our method,
the modified local least squares problem enables us to prove a norm
equivalence between the piecewise constant space and the high-order
reconstructed space on the same mesh.
This crucial property further allows us to 
prove that the preconditioner is optimal for any high-order accuracy
in the sense that the upper bound of the condition number to the 
preconditioned system is independent of the mesh size. 
A series of numerical experiments including an example on the
polygonal mesh are presented to demonstrate the efficiency of the
preconditioner in two and three dimensions.

The rest of this paper is organized as follows. In Section
\ref{sec_preliminaries}, we introduce the notation and recall some
inequalities. Section \ref{sec_space} introduces the reconstructed
discontinuous approximation method including the construction to the
reconstruction operator and some basic properties of the space.
Section \ref{sec_elliptic} presents the symmetric/nonsymmetric
interior penalty DG methods with the reconstructed space to the
elliptic problem. The preconditioning method is also given in this
section.  In Section \ref{sec_numericalresults}, the accuracy and the
efficiency of the proposed method are illustrated by a series of
numerical tests.  A brief conclusion is given in Section
\ref{sec_conclusion}.


\section{Preliminaries}
\label{sec_preliminaries}
Let $\Omega \subset \mb{R}^d(d = 2, 3)$ be a bounded convex polygonal
(polyhedral) domain with the boundary $\partial \Omega$.  We denote by
$\MTh$ a quasi-uniform triangulation over the domain $\Omega$. Let
$\MEhI$ and $\MEhB$ be the collections of all $d-1$ dimensional
interior faces in the partition $\MTh$, and faces lying on the
boundary $\partial \Omega$, respectively. 
We further set $\MEh := \MEhI \cup \MEhB$ as the set of all faces. 
For any element $K \in \MTh$ and any face $e \in \MEh$, we set $h_K :=
\text{diam}(K)$ and $h_e := \text{diam}(e)$ as their diameters, and we
let $\rho_K$ be the radius of the largest inscribed disk (ball) in
$K$.
We denote by $h:= \max_{K \in \MTh} h_K$ the mesh size to $\MTh$,
and by $\rho := \min_{K \in \MTh} \rho_K$. 
The mesh $\MTh$ is
assumed to be quasi-uniform in the sense that there exists a constant
$C_\sigma > 0$ such that $h \leq C_\sigma \rho$.

The quasi-uniformity of $\MTh$ brings us the following fundamental
estimates: the trace estimate and the inverse estimate: there exist
constants $C$ independent of $h$ such that 
\begin{align}
  \| v \|_{L^2(\partial K)}^2 &\leq C ( h_K^{-1} \| v \|_{L^2(K)}^2 +
  h_K \| v \|_{H^1(K)}^2), \quad \forall v \in H^1(K), \quad \forall K
  \in \MTh,
  \label{eq_trace} \\
  \|  v \|_{H^q(K)} &\leq C h_K^{p - q} \| v \|_{H^p(K)}, \quad q \geq
  p \geq 0, \quad \forall v \in \mb{P}_m(K), \quad \forall K \in \MTh,
  \label{eq_inverse}
\end{align}
where $\mb{P}_m(\cdot)$ denotes the space of polynomials of degree
less than $m$.

Next, we introduce the following trace operators associated with the
weak forms, which are commonly used in the DG framework.
Let $e \in \MEhI$ be any interior face shared by 
two elements $K^+$ and $K^-$ with the unit outward normal vectors
$\un^+$ and $\un^-$ along $e$, respectively. 
For any piecewise smooth scalar-valued function $v$ and vector-valued
function $\btau$, 
the jump operator $\jump{\cdot}$ and the average operator 
$\aver{\cdot}$ are defined as
\begin{displaymath}
  \begin{aligned}
    \jump{v}|_e &:= v^+|_{e} \un^+ + v^-|_e \un^-, \quad \aver{v}|_e
    := \frac{1}{2}(v^+|_e + v^-|_e), \\
    \jump{\bm{\tau}}|_e &:= \bm{\tau}^+|_e \cdot \un^+ +
    \bm{\tau}^-|_e \cdot \un^-, \quad \aver{\bm{\tau}}|_e :=
    \frac{1}{2}(\bm{\tau}^+|_e + \bm{\tau}^-|_e ),
  \end{aligned} \quad \forall e \in \MEhI,  
\end{displaymath}
where $v^{\pm} := v|_{K^{\pm}}$, $\btau^{\pm} :=
\btau|_{K^{\pm}}$.
For any boundary face $e \in \MEhB$, the trace operators are modified
as 
\begin{displaymath}
  \begin{aligned}
    \jump{v}|_e := v|_e \un, \quad \aver{v}|_e := v|_e , \quad
    \jump{\bm{\tau}}|_e := \bm{\tau}|_e \cdot \un, \quad
    \aver{\bm{\tau}}|_e  := \bm{\tau}|_e, \quad \forall e \in \MEhB,
  \end{aligned}
\end{displaymath}
where $\un$ is the unit outward normal to $e$.

For a bounded domain $D$, $L^2(D)$ and $H^r(D)$ denote the usual
Sobolev spaces with the exponent $r \geq 0$, and we also follow their
associated inner products, seminorms and norms. Throughout this
paper, $C$ and $C$ with subscripts are denoted to be generic constants
that may vary in different occurrences but are always independent of
the mesh size $h$. 
We also point out that these constants may depend on
the polynomial degree, the quasi-uniformity measure $C_\sigma$ and the
coefficient in the problem \eqref{eq_elliptic}, such as the constants
appearing in the trace and inverse estimates \eqref{eq_trace} - 
\eqref{eq_inverse}. We refer to \cite{Cangiani2017hpversion} for more
details about the constants in \eqref{eq_trace} - \eqref{eq_inverse}.

In this paper, we are concerned with the elliptic problem defined on
$\Omega$, which seeks $u$ such that
\begin{equation}
  \begin{aligned}
    -\nabla \cdot (A \nabla u) &= f, && \text{in } \Omega, \\
    u &= g, && \text{on } \partial \Omega, \\
  \end{aligned}
  \label{eq_elliptic}
\end{equation}
where $f \in L^2(\Omega)$ is a given source function and $g \in
H^{3/2}(\partial \Omega)$ is the boundary condition. The coefficient
$A \in \mb{R}^{d \times d}$ is assumed to be a symmetric positive
definite matrix.  By the elliptic regularity theory, the problem
\eqref{eq_elliptic} admits a unique solution in $H^2(\Omega)$.
We are aiming to give a preconditioned interior penalty discontinuous
Galerkin method for the problem \eqref{eq_elliptic}, based on a
reconstructed approximation space.


\section{The Reconstructed Discontinuous Approximation Method}
\label{sec_space}
In this section, we will introduce a linear reconstruction operator to
obtain a discontinuous approximation space for the given mesh $\MTh$.
The reconstructed space can achieve a high-order accuracy while the
number of degrees of freedom always remain the same as the number of
elements in $\MTh$.  The construction of the operator includes two
steps. 

Step 1. For each $K \in \MTh$, we construct an element patch $S(K)$,
which consists of $K$ itself and some surrounding elements. 
The size of the patch is controlled with a given threshold $\# S$. 
The construction of the element patch $S(K)$ is conducted by a
recursive algorithm. We begin by setting $S_0(K) = \{ K \}$, and
define $S_t(K)$ recursively: 
\begin{equation}
  S_t(K) = \bigcup_{K' \in S_{t-1}(K)} \bigcup_{ K'' \in \Delta(K')}
  K'', \quad t = 0,1, \dots
  \label{eq_patch}
\end{equation}
where $\Delta(K):= \{ K' \in \MTh \ | \  \overline{K} \cap \overline{K'}
\neq \varnothing \}$. The recursion stops once $t$ meets the condition 
that the cardinality $\# S_t(K) \geq \# S$, and we let the patch 
$S(K) := S_t(K)$. Applying the recursive algorithm \eqref{eq_patch} 
to all elements in $\MTh$, each $K \in \MTh$ has a large element patch
$S(K)$.

Step 2. For each $K \in \MTh$, we solve a local least squares fitting
problem on the patch $S(K)$. We let $\bm{x}_K$ be the barycenter of
the element $K$ and mark barycenters of all elements as
collocation points.  
Let $I(K)$ be the set of collocation points located inside the domain
of $S(K)$,
\begin{displaymath}
  I(K) := \{ \bm{x}_{K'} \ |\ K' \in S(K) \},
\end{displaymath}
Let $U_h^0$ be the piecewise constant space with respect to $\MTh$,
\begin{displaymath}
  U_h^0 := \{ v_h \in L^2(\Omega) \ | \  v_h|_K \in \mb{P}_0(K), \  
  \forall K \in \MTh\}.
\end{displaymath}
Given a piecewise constant function $v_h \in U_h^0$, for each $K \in
\MTh$, we seek a polynomial of degree $m(m \geq 1)$ by the following
constrained local least squares problem on the patch $S(K)$, 
\begin{equation}
  \begin{aligned}
    \mathop{\arg\min}_{p \in \mb{P}_m(S(K))} &\sum_{\bm{x} \in I(K)} (
    p(\bm{x}) - v_h(\bm{x}))^2, \\
    \text{s.t. } &p(\bm{x}_K) = v_h(\bm{x}_{K}).
  \end{aligned}
  \label{eq_leastsquares}
\end{equation}
We note that the unisolvence of the problem \eqref{eq_leastsquares}
depends on the distribution of collocation points in $I(K)$.  We make
the following assumption of $S(K)$ and $I(K)$. 
\begin{assumption}
  For any element $K \in \MTh$ and any polynomial $p \in
  \mb{P}_m(S(K))$, $p|_{I(K)} = 0$ implies $p|_{S(K)} = 0$.
  \label{as_1}
\end{assumption}
Assumption \ref{as_1} indicates that the points in $I(K)$ deviate from
being located on an algebraic curve (surface) of degree $m$. In
addition, Assumption \ref{as_1} implies the condition $\#I(K) =
\#S(K) \geq \dim(\mb{P}_m(\cdot))$.
The existence and the uniqueness of the solution to
\eqref{eq_leastsquares} are stated in the following lemma.
\begin{lemma}
  For each $K \in \MTh$, the problem \eqref{eq_leastsquares} admits a
  unique solution. 
  \label{le_unique_sol}
\end{lemma}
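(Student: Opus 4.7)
The plan is to recast \eqref{eq_leastsquares} as a finite-dimensional constrained convex quadratic program and invoke Assumption \ref{as_1} to extract the non-degeneracy that delivers both existence and uniqueness. Fix any basis $\{\phi_1, \dots, \phi_N\}$ of $\mb{P}_m(S(K))$ and write every candidate as $p = \sum_i c_i \phi_i$. Enumerating $I(K) = \{\bm{x}^{(1)}, \dots, \bm{x}^{(M)}\}$, the objective becomes $J(\bm{c}) = \| A\bm{c} - \bm{b} \|_2^2$ with $A_{ji} = \phi_i(\bm{x}^{(j)})$ and $b_j = v_h(\bm{x}^{(j)})$, and the side condition becomes the single linear equation $\bm{a}^\top \bm{c} = v_h(\bm{x}_K)$ with $a_i = \phi_i(\bm{x}_K)$. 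The feasible set is non-empty because the constant polynomial $p \equiv v_h(\bm{x}_K)$ supplies an admissible coefficient vector.

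The key input from Assumption \ref{as_1} is that $A$ has full column rank: if $A\bm{c} = 0$ then $p = \sum_i c_i \phi_i$ vanishes on $I(K)$, hence on all of $S(K)$ by the assumption, and then as a polynomial, so $\bm{c} = 0$. Consequently $A^\top A$ is symmetric positive definite, $J$ is strictly convex, and the inequality $\|A\bm{c} - \bm{b}\|_2 \geq \sigma_{\min}(A)\|\bm{c}\|_2 - \|\bm{b}\|_2$ shows that $J$ is coercive on $\mb{R}^N$. Restricting the continuous coercive $J$ to the closed non-empty affine feasible set therefore still yields a minimizer, and strict convexity forces it to be unique.

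Equivalently, one may characterize the minimizer through the KKT system
\begin{equation*}
\begin{bmatrix} A^\top A & \bm{a} \\ \bm{a}^\top & 0 \end{bmatrix} \begin{bmatrix} \bm{c} \\ \lambda \end{bmatrix} = \begin{bmatrix} A^\top \bm{b} \\ v_h(\bm{x}_K) \end{bmatrix},
\end{equation*}
whose saddle-point matrix is invertible because $A^\top A$ is positive definite and $\bm{a} \neq 0$ (take, e.g., $\phi_1 \equiv 1$ so $a_1 = 1$). I do not expect any step to be a real obstacle: the lemma is essentially a linear-algebraic consequence of Assumption \ref{as_1}. The only point worth flagging is that $\bm{x}_K \in I(K)$, which holds because $K \in S_0(K) \subseteq S(K)$; this ensures the imposed constraint is consistent with the least-squares data and merely forces the residual at $\bm{x}_K$ to vanish, leaving the strict convexity and coercivity arguments intact.
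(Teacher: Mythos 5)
Your proof is correct. It reaches the same destination as the paper's but by a more structural route: you identify Assumption \ref{as_1} as saying precisely that the collocation matrix $A$ has full column rank, so that $A^\top A$ is positive definite, the objective is strictly convex and coercive, and existence plus uniqueness on the nonempty affine feasible set follow from general convex-optimization facts (or from the invertibility of the KKT saddle-point matrix). The paper instead argues by hand: it dismisses existence as trivial and proves uniqueness by perturbing a minimizer $p_i$ in an admissible direction $q$ with $q(\bm{x}_K)=0$, which yields the first-order orthogonality relation \eqref{eq_qpiuorth}; taking $q = p_1 - p_2$ then shows the difference of two minimizers vanishes on $I(K)$, and Assumption \ref{as_1} finishes. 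The two arguments are mathematically the same at bottom (strict convexity of a quadratic is exactly the statement that the only null direction of its Hessian is zero), but each buys something: your version makes existence airtight via coercivity and exhibits the linear system one would actually assemble in code, while the paper's perturbation argument produces the identity \eqref{eq_qpiuorth} as a by-product, which it reuses verbatim in the proof of the stability estimate in Lemma \ref{le_recons_stability}. One small point: your closing remark that $\bm{x}_K \in I(K)$ "forces the residual at $\bm{x}_K$ to vanish" is a true consequence of the constraint but is not needed anywhere in your argument; feasibility of the constant polynomial already suffices.
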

\begin{proof}
  We mainly prove the uniqueness of the solution since the existence is
  trivial. Let $p_1$ and $p_2$ be the solutions to
  \eqref{eq_leastsquares}. Let $q \in \mb{P}_m(S(K))$ be any
  polynomial such that $q(\bm{x}_K) = 0$, and let $t > 0$ be
  arbitrary. For $i = 1,2$, $p_i + tq$ satisfies the constraint in
  \eqref{eq_leastsquares}. 
  Bringing $p_i + tq$ into \eqref{eq_leastsquares} yields that
  \begin{displaymath}
    \sum_{\bm{x} \in I(K)} (p_i(\bm{x}) + tq(\bm{x})
    - v_h(\bm{x}))^2 \geq \sum_{\bm{x} \in I(K)} (
    p_i(\bm{x}) - v_h(\bm{x}) )^2,
  \end{displaymath}
  which further gives 
  \begin{displaymath}
    \sum_{\bm{x} \in I(K)} -2t q(\bm{x})
    (p_i(\bm{x}) - v_h(\bm{x})) +
    t^2(q(\bm{x}))^2 \geq 0.
  \end{displaymath}
  Because $q$ and $t$ are arbitrary, the above inequality gives us
  that
  \begin{equation}
    \sum_{\bm{x} \in I(K)} q(\bm{x}) (p_i(\bm{x}) -
    v_h(\bm{x})) = 0 \quad \text{and} \quad  \sum_{\bm{x} \in
    I(K)} q(\bm{x}) (p_1(\bm{x}) -
    p_2(\bm{x})) = 0.
    \label{eq_qpiuorth}
  \end{equation}
  Note that $(p_1 - p_2) \in \mb{P}_m(S(K))$ with $(p_1 -
  p_2)(\bm{x}_K) = 0$. Letting $q = p_1 - p_2$ immediately shows that
  $p_1 - p_2$ vanishes at all points in $I(K)$. By Assumption
  \ref{as_1}, we know that $p_1 - p_2 = 0$. This completes the proof.
\end{proof}
We denote by $v_{h, S(K)} \in \mb{P}_m(S(K))$ the solution to
\eqref{eq_leastsquares}.  From the proof of Lemma \ref{le_unique_sol},
it can be seen that $v_{h, S(K)}$ linearly depends on the given
function $v_h$, which allows us to define a local linear operator
$\mc{R}_K^m: U_h^0 \rightarrow \mb{P}_m(S(K))$ such that $\mc{R}_K^m
v_h$ is the solution $v_{h, S(K)}$ of the problem
\eqref{eq_leastsquares} for $\forall v_h \in U_h^0$.

Further, we define a global reconstruction operator $\mc{R}^m$ in an
element-wise manner, which reads
\begin{equation}
  \begin{aligned}
    \mc{R}^m : \,\, U_h^0 &\longrightarrow U_h^m, \\
    v_h &\longrightarrow \mc{R}^m v_h,
  \end{aligned}
  \quad (\mc{R}^m v_h)|_K := (\mc{R}^m_K v_h)|_K, \quad \forall K \in
  \MTh.
  \label{eq_Rm}
\end{equation}
For any element $K \in \MTh$, $(\mc{R}^m v_h)|_K$ is the restriction
of $\mc{R}^m_K v_h$ on $K$. Thus, $(\mc{R}^m v_h)|_K$ has the same
expression as $\mc{R}^m_K v_h$.  By $\mc{R}^m$, any piecewise constant
function $v_h$ is mapped into a piecewise $m$-th degree polynomial
function $\mc{R}^m v_h$. Here $U_h^m := \mc{R}^m U_h^0$ is defined as
the image space, which is actually the approximation space in the
numerical scheme. From the linearity of $\mc{R}^m$, we know that
$\rank(\mc{R}^m) \leq \dim(U_h^0)$. In fact, we can prove that
$\mc{R}^m$ is non-degenerate. 
\substitute{}{
\begin{lemma}
  The operator $\mc{R}^m$ is full-rank, i.e. $\dim(U_h^m) =
  \dim(U_h^0)$.
  \label{le_fullrank}
\end{lemma}
\begin{proof}
  Since $\mc{R}^m$ is linear, it is equivalent to show that if some 
  $v_h \in U_h^0$ satisfy that $\mc{R}^m v_h = 0$, then $v_h = 0$.
  For such $v_h$, the constraint in \eqref{eq_leastsquares} indicates
  that $v_h(\bm{x}_K) = (\mc{R}^m v_h)(\bm{x}_K) = 0$ for every
  element $K \in \MTh$. Then, we conclude that $v_h$ must be the zero
  function, which completes the proof.
\end{proof}
}
We note that Lemma \ref{le_fullrank} is essentially established 
on the constraint in \eqref{eq_leastsquares}, which is the major
improvement compared with our previous methods in
\cite{Li2012efficient,Li2016discontinuous}. This
constraint and the non-degenerate property are also fundamental for us
to develop an \substitute{efficiency}{efficient} preconditioning
method based on the space $U_h^0$.

Let us give a group of basis functions to the reconstructed space
$U_h^m$. 
We denote by $n_e$ the number of elements in $\MTh$, and
there holds $\dim(U_h^m) = n_e$. 
For an element $K \in \MTh$ numbered by the index $i$, we associate
$K$ with a piecewise constant function $e_i \in U_h^0$ such that 
\begin{equation}
  e_i(\bm{x}) = \left\{ \begin{aligned}
    &1, \quad \bm{x} \in K, \\
    &0, \quad \text{otherwise.}
  \end{aligned}
  \right.
  \label{eq_Kei}
\end{equation}
Let $\lambda_j := \mc{R}^m e_j(1 \leq j \leq n_e)$.  
Then, we claim that $\{\lambda_j\}_{j = 1}^{n_e}$ are linearly
independent. Again from the
constraint in \eqref{eq_leastsquares}, we know that
$\lambda_j(\bm{x}_K) = 1$ with $K$ indexed by $j$, and $\lambda_j$
vanishes at other
collocation points. Consider the group of coefficients $\{a_j \}_{j =
1}^{n_e}$ such that
\begin{equation}
  a_1 \lambda_1(\bm{x}) + a_2 \lambda_2(\bm{x}) + \ldots + a_{n_e}
  \lambda_{n_e}(\bm{x}) = 0.
  \label{eq_ajlj}
\end{equation}
We let $\bm{x} = \bm{x}_K$ for each $K \in \MTh$ in \eqref{eq_ajlj},
and we can know that
all $a_j = 0$. The linear independence of $\{ \lambda_j
\}_{j=1}^{n_e}$ is reached. Since $\lambda_j \in U_h^m$ for $1 \leq j
\leq n_e$, we conclude that
$U_h^m = \text{span}(\{ \lambda_j\}_{j = 1}^{n_e})$. Equivalently, $\{
\lambda_j \}_{j=1}^{n_e}$ are basis functions of $U_h^m$.
In \eqref{eq_Kei}, for any element $K'$ such that $K \notin S(K')$,
there holds $e_i|_{S(K')} = 0$, and we can know that $\mc{R}_{K'}^m e_i =
0$. This fact implies that $\lambda_i$ has a compact support that
$\text{supp}(\lambda_i) = \bigcup_{K'|  K \in S(K')} \overline{K'}$.

Next, we extend the operator $\mc{R}^m$ to $C^0$ smooth functions in a
natural way.
Given any $v \in C^0(\Omega)$, we define an associated piecewise
constant function $\wt{v}_h \in U_h^0$ such
that $\wt{v}_h = v$ at all collocation points, i.e. 
$\wt{v}_h(\bm{x}_K) = v(\bm{x}_K)(\forall K \in \MTh)$. Then  
$\mc{R}^m v$ is defined as $\mc{R}^m v:= \mc{R}^m \wt{v}_h$.  By the
basis functions $\{ \lambda_j \}_{j = 1}^{n_e}$, $\mc{R}^m v$ has the
expansion that
\begin{equation}
  \mc{R}^m v = v(\bm{x}_{K_1}) \lambda_1 + v(\bm{x}_{K_2}) \lambda_2 +
  \ldots + v(\bm{x}_{K_{n_e}}) \lambda_{n_e}, \quad \forall v \in
  C^0(\Omega),
  \label{eq_Rmvexpansion}
\end{equation}
where $K_j$ is the element indexed by $j$.

\begin{remark}
  The least squares problem \eqref{eq_leastsquares} does not rely on
  the geometrical shape of the element. The reconstruction process
  can be applied to the polygonal mesh. We refer to
  \cite{DaVeiga2014} for the shape regularity conditions, which bring
  the trace estimate \eqref{eq_trace} and the inverse estimate
  \eqref{eq_inverse} for the polygonal mesh. All estimates in this
  paper can be extended on the shape-regular polygonal mesh without
  any difficulty.
  \label{re_polygonalmesh}
\end{remark}

We present the following stability property of the reconstruction
operator.
\begin{lemma}
  For any element $K \in \MTh$, there holds
  \begin{equation}
    \| \mc{R}^m_K g \|_{L^\infty(S(K))} \leq (1 + 2 \Lambda(m, K)
    \sqrt{ \# S(K)}) \max_{\bm{x} \in I(K)} |g|, \quad \forall g \in
    C^0(\Omega), 
    \label{eq_recons_stability}
  \end{equation}
  where
  \begin{equation}
    \Lambda(m,K) := \max_{p \in \mb{P}_m(S(K))} \frac{\max_{\bm{x} \in
    S(K)} | p(\bm{x}) |}{\max_{\bm{x} \in I(K)} | p(\bm{x})|}.
    \label{eq_Lambda}
  \end{equation}
  \label{le_recons_stability}
\end{lemma}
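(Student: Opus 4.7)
The plan is to exploit the constrained least-squares structure by translating the constraint into a linear one. Setting $p := \mc{R}^m_K g$ and $M := \max_{\bm{x} \in I(K)} |g(\bm{x})|$, I would write $p = g(\bm{x}_K) + q$ with $q \in \mb{P}_m(S(K))$ satisfying $q(\bm{x}_K) = 0$. Substituting into the objective of \eqref{eq_leastsquares} shows that $q$ minimizes $\sum_{\bm{x} \in I(K)} \bigl((g(\bm{x}) - g(\bm{x}_K)) - r(\bm{x})\bigr)^2$ over the linear subspace $W := \{ r \in \mb{P}_m(S(K)) : r(\bm{x}_K) = 0 \}$. By Assumption \ref{as_1} the evaluation map $W \to \mb{R}^{\#I(K)}$ is injective, so $q$ is the orthogonal projection of the data $g - g(\bm{x}_K)$ onto $W$ in the $\ell^2(I(K))$ inner product.

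From the Pythagorean identity for this projection I would obtain
\[
\sum_{\bm{x} \in I(K)} q(\bm{x})^2 \leq \sum_{\bm{x} \in I(K)} \bigl(g(\bm{x}) - g(\bm{x}_K)\bigr)^2.
\]
Since $|g(\bm{x}) - g(\bm{x}_K)| \leq 2M$ for every $\bm{x} \in I(K)$, the right-hand side is at most $4\, \# S(K)\, M^2$, and bounding the max by the $\ell^2$ norm then yields $\max_{\bm{x} \in I(K)} |q(\bm{x})| \leq 2 \sqrt{\# S(K)}\, M$.

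The concluding step is a triangle inequality on $S(K)$ together with the definition \eqref{eq_Lambda} of $\Lambda(m,K)$ applied to $q \in \mb{P}_m(S(K))$:
\[
\| p \|_{L^\infty(S(K))} \leq |g(\bm{x}_K)| + \| q \|_{L^\infty(S(K))} \leq M + \Lambda(m,K)\max_{\bm{x} \in I(K)} |q(\bm{x})| \leq \bigl(1 + 2\Lambda(m,K)\sqrt{\# S(K)}\bigr) M,
\]
which is exactly \eqref{eq_recons_stability}.

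The delicate point is the translation $p = g(\bm{x}_K) + q$. A more direct route, comparing $p$ with the admissible constant test polynomial $p_0 \equiv g(\bm{x}_K)$, still yields $\max_{\bm{x} \in I(K)} |p - g| \leq 2\sqrt{\# S(K)}\, M$, but applying $\Lambda(m,K)$ directly to $p$ introduces an extra factor of $\Lambda$ in front of the constant piece and produces a weaker bound of the form $\Lambda(m,K)(1 + 2\sqrt{\# S(K)})M$. Splitting off the constant $g(\bm{x}_K)$ before invoking $\Lambda$ is what isolates the factor $1$ from the factor $\Lambda(m,K)$ and gives the sharper form claimed in \eqref{eq_recons_stability}.
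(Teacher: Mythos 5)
Your proof is correct and is essentially the paper's own argument in different clothing: the orthogonal-projection/Pythagorean step is exactly the paper's use of the optimality condition $\sum_{\bm{x}\in I(K)} q(\bm{x})(p(\bm{x})-g(\bm{x}))=0$ with the test polynomial $q=p-g(\bm{x}_K)$ followed by Cauchy--Schwarz, and the concluding triangle inequality splitting off $g(\bm{x}_K)$ before invoking $\Lambda(m,K)$ is identical. No gaps.
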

\begin{proof}
  Let $p = \mc{R}_K^m g \in \mb{P}_m(S(K))$ be the solution to
  \eqref{eq_leastsquares} on $S(K)$. By \eqref{eq_qpiuorth}, there
  holds $\sum_{\bm{x} \in I(K)} q(\bm{x})(p(\bm{x}) -
  g(\bm{x})) = 0$ for any $q \in \mb{P}_m(S(K))$ with 
  $q(\bm{x}_K) = 0$. Setting $q(\bm{x}) = p(\bm{x}) - g(\bm{x}_K)$, we
  know that $\sum_{\bm{x} \in I(K)} (p(\bm{x}) -
  g(\bm{x}_K))(p(\bm{x}) - g(\bm{x})) = 0$. From this equality, we
  find that
  \begin{align*}
    \sum_{\bm{x} \in I(K)} (p(\bm{x}) - g(\bm{x}_K))^2 = \sum_{\bm{x}
    \in I(K)}(g - g(\bm{x}_K))(p - g(\bm{x}_K)).
  \end{align*}
  Applying the Cauchy-Schwarz inequality, we have that
  \begin{equation}
    \sum_{\bm{x} \in I(K)} (p - g(\bm{x}_K) )^2 \leq \sum_{\bm{x} \in
    I(K)} (g - g(\bm{x}_K))^2.
    \label{eq_recons_stability_proof}
  \end{equation}
  Combining \eqref{eq_recons_stability_proof} and the definition
  \eqref{eq_Lambda}, we obtain that
  \begin{displaymath}
    \begin{aligned}
      \| p - g(\bm{x}_K) \|^2_{L^\infty(S(K))} 
      &\leq \Lambda(m, K)^2 \max_{\bm{x} \in I(K)}(p - g(\bm{x}_K))^2
      & \leq \Lambda(m, K)^2 \sum_{\bm{x} \in I(K)} (g - g(\bm{x}_K))^2  \\
      &\leq 4 \Lambda(m, K)^2 \# S(K) \max_{\bm{x} \in I(K)} g^2.
    \end{aligned}
  \end{displaymath}
  Hence, we conclude that
  \begin{displaymath}
    \begin{aligned}
      \| p \|_{L^\infty(S(K))} &\leq \| p - g(\bm{x}_K)
      \|_{L^\infty(S(K))} + | g(\bm{x}_K) | \leq (1 +2  \Lambda(m,K)
      \sqrt{ \# S(K)}) \max_{\bm{x} \in I(K)} |g|,
    \end{aligned}
  \end{displaymath}
  which completes the proof.
\end{proof}
We introduce the constants ${\Lambda}_m$ and $\overline{\Lambda}_m$ by
\begin{equation}
  \Lambda_m := \max_{K \in \MTh} \Lambda(m, K), \quad
  \overline{\Lambda}_m :=  \max_{K \in \MTh} (1 +  \Lambda_m \sqrt{\#
  S(K)}).
  \label{eq_recon_const}
\end{equation}
From the stability estimate \eqref{eq_recons_stability}, we have that 
\begin{equation}
  \|g - \mc{R}_K^m g \|_{L^{\infty}(S(K))} \leq 2\overline{\Lambda}_m
  \inf_{p \in \mb{P}_m(S(K))} \| g - p \|_{L^{\infty}(S(K))}, \quad
  \forall g \in C^0(\Omega).
  \label{eq_bestapp}
\end{equation}
The approximation property of $\mc{R}_K^m$ can be established on
\eqref{eq_bestapp}.
\begin{lemma}
  There exists a constant $C$ such that 
  \begin{equation}
    \|g - \mc{R}^m_K g \|_{H^q(K)} \leq C \overline{\Lambda}_m
    h_K^{m+1-q} \| g \|_{H^{m+1}(S(K))}, \quad 0 \leq q \leq m, \quad
    \forall K \in \MTh.
    \label{eq_Rmapp}
  \end{equation}
  \label{le_Rmapp}
\end{lemma}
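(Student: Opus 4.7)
The plan is to combine the $L^\infty$-best-approximation estimate \eqref{eq_bestapp} already established for $\mc{R}_K^m$ with a Bramble--Hilbert-type polynomial approximation on the patch $S(K)$ and the inverse estimate \eqref{eq_inverse} on the single element $K$. The rate $h_K^{m+1-q}$ will emerge from an $h_K^{m+1-d/2}$ rate in $L^\infty(S(K))$, after paying the factor $h_K^{-q+d/2}$ incurred when passing from an $L^\infty(K)$ bound on a polynomial of degree $\leq m$ to its $H^q(K)$ norm.

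First I would use the quasi-uniformity of $\MTh$ together with the uniform bound on $\# S(K)$ to regard $S(K)$ as the image of a reference patch of $O(1)$ diameter under an affine scaling of factor $h_K$. On this reference configuration, the classical Bramble--Hilbert lemma combined with the Sobolev embedding $H^{m+1} \hookrightarrow L^\infty$ (valid for $m \geq 1$ in dimensions $d = 2, 3$) produces some $p^\star \in \mb{P}_m(S(K))$ for which
\begin{equation*}
\|g - p^\star\|_{L^\infty(S(K))} \leq C h_K^{m+1-d/2}\, |g|_{H^{m+1}(S(K))}, \qquad
|g - p^\star|_{H^q(S(K))} \leq C h_K^{m+1-q}\, |g|_{H^{m+1}(S(K))}.
\end{equation*}
Inserting this $p^\star$ into the best-approximation estimate \eqref{eq_bestapp} then yields
\begin{equation*}
\|g - \mc{R}_K^m g\|_{L^\infty(S(K))} \leq 2 \overline{\Lambda}_m \|g - p^\star\|_{L^\infty(S(K))} \leq C \overline{\Lambda}_m h_K^{m+1-d/2}\, |g|_{H^{m+1}(S(K))}.
\end{equation*}

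Next I would decompose $g - \mc{R}_K^m g = (g - p^\star) + (p^\star - \mc{R}_K^m g)$ and estimate the two pieces in $H^q(K)$ separately. The piece $g - p^\star$ is controlled directly by the Bramble--Hilbert bound above, restricted from $S(K)$ to $K$. For the polynomial piece $p^\star - \mc{R}_K^m g \in \mb{P}_m(K)$, the inverse estimate \eqref{eq_inverse} gives $\|\cdot\|_{H^q(K)} \leq C h_K^{-q} \|\cdot\|_{L^2(K)}$, which is in turn bounded by $C h_K^{d/2} \|\cdot\|_{L^\infty(K)}$; invoking the triangle inequality between the two previous $L^\infty$ bounds delivers the matching $h_K^{m+1-q}$ rate. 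Adding the two contributions and bounding the seminorm $|g|_{H^{m+1}(S(K))}$ by the full norm $\|g\|_{H^{m+1}(S(K))}$ completes the proof.

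The main obstacle is the first step, since Bramble--Hilbert is most often stated on a single shape-regular element, whereas here it is required on the irregular patch $S(K)$. However, because $S(K)$ consists of at most a uniformly bounded number of quasi-uniform elements, its shape remains controlled; the standard remedy is to scale to a fixed reference patch of $O(1)$ size and invoke the classical Bramble--Hilbert lemma there. As observed in Remark \ref{re_polygonalmesh}, the same reasoning extends to shape-regular polygonal meshes without additional difficulty.
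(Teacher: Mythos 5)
Your argument is correct and is essentially the proof the paper has in mind: the paper does not spell out the details but defers to \cite[Theorem 3.3]{Li2012efficient}, which follows exactly this route of combining the quasi-best-approximation property \eqref{eq_bestapp} with a scaled Bramble--Hilbert bound on the patch and the inverse estimate \eqref{eq_inverse} to trade the $L^\infty(S(K))$ rate $h_K^{m+1-d/2}$ for the $H^q(K)$ rate $h_K^{m+1-q}$. The one point that genuinely requires care --- that $S(K)$ is not a fixed reference domain and need not be star-shaped, so the Bramble--Hilbert constant must be made uniform via the bounded cardinality $\# S(K)$ and shape regularity (or an extension to the ball $B_{R_K}$) --- is exactly the point you flag, and your treatment of it is at the same level of rigor as the cited reference.
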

The proof is quite formal and we refer to \cite[Theorem
3.3]{Li2012efficient} for details.

By \eqref{eq_bestapp} - \eqref{eq_Rmapp}, the polynomial
$\mc{R}_K^m g$ has the optimal convergence rate on the element $K \in
\MTh$ if $\overline{\Lambda}_m$ admits a uniform upper bound. 
For each $K \in \MTh$, we let $B_{r_K}$ and $B_{R_K}$ be the largest
ball and the smallest ball such that $B_{r_K} \subset \bigcup_{K' \in
S(K)} \overline{K'} \subset B_{R_K}$, with the radius $r_K$ and $R_K$,
respectively. By \cite[Lemma 5]{Li2016discontinuous}, there holds
$\Lambda_m \leq 1 +\varepsilon$ under the condition $r_K \geq m
\sqrt{2R_K h_K(1 + 1/\varepsilon)}$. Generally speaking, if the patch
$S(K)$ is large enough, there will be $r_K \approx R_K$, and this
condition can be then satisfied.
Therefore, for this condition we are
required to construct a wide element patch to ensure a large $r_K$. 
In \cite[Lemma 5]{Li2016discontinuous}\cite[Lemma
3.4]{Li2012efficient}, we prove that 
this condition can be met when the threshold $\# S$ in
\eqref{eq_patch} is greater than a certain constant, which only
depends on $m$ and $C_\sigma$.  We also notice that this given bound of
the size of the patch is usually too large and is impractical in the
computer implementation. As
numerical observations, the reconstructed method works very well when
the size of the patch is far less than the theoretical value. In
Section \ref{sec_numericalresults}, we list the values $\# S$ in all
tests. The threshold is roughly taken as $\# S \approx
\frac{d+1}{2} \dim(\mb{P}_m(\cdot))$ on quasi-uniform meshes.

\substitute{}{
\begin{remark}
  For the patch $S(K)$, we consider the special case that the
  corresponding collocation points set $I(K)$ has exactly
  $\dim(\mb{P}_m(\cdot))$ points, then the solution to the least
  squares problem \eqref{eq_leastsquares} becomes the Lagrange
  interpolation polynomial and the constant $\Lambda(m, K)$ is equal
  to the Lebesgue constant \cite{Powell1981approximation}. To our best
  knowledge, in two and three dimensions there are few results 
  about the bound of the Lebesgue constant. 
  The Lebesgue constant may grow very fast as $h$ tends to zero, which
  will hamper the convergence of the scheme. 
  Currently, we can prove that the constant $\Lambda_m$ admits a
  uniform upper bound for the wider element patch.  The wider element
  patch will bring more computational cost for filling the stiffness
  matrix and increase the width of the banded structure, which also
  leads to more computational cost in the matrix-vector product for
  our method.
  This can be regarded as the price we pay for the upper bound to
  $\Lambda_m$. 
  On the other hand, we can give a preconditioning method based on the
  piecewise constant space for any high-order accuracy, which can be
  computed efficiently. Consequently, solving the resulting linear
  system is still observed to be fast in the numerical tests.

  Furthermore, the upper bound of $\overline{\Lambda}_m$ certainly
  depends on the polynomial degree $m$, and $\overline{\Lambda}_m$
  will grow larger as $m$ increases. The precise dependence between
  $\overline{\Lambda}_m$ and $m$ and the $h$-$m$ version of the
  reconstructed space are considered in the future research.
  \label{re_Lambda}
\end{remark}
}


\section{Approximation to the elliptic problem}
\label{sec_elliptic}
In this section, we present the numerical scheme for the elliptic
problem \eqref{eq_elliptic}, based on the interior penalty method and
the reconstructed space $U_h^m$.  We seek $u_h \in U_h^m$ such that
\begin{equation}
  a_{h, \theta}(u_h, v_h) = l_{h, \theta}(v_h), \quad \forall v_h \in
  U_h^m,
  \label{eq_dvar}
\end{equation}
where
\begin{equation}
  \begin{aligned}
    a_{h, \theta}(u_h, v_h) := \sum_{K \in \MTh} \int_K A\nabla u_h \cdot \nabla
    v_h \d{\bm{x}} &-\sum_{e \in \MEh} \int_e \aver{A\nabla u_h} \cdot
    \jump{v_h} \d{\bm{s}} \\
    & + \theta \sum_{e \in \MEh} \int_e \aver{A\nabla v_h}
    \cdot \jump{u_h} \d{\bm{s}} + \sum_{e \in \MEh} \int_e {\mu}
    h_e^{-1} \jump{u_h} \jump{v_h} \d{\bm{s}},
  \end{aligned}
  \label{eq_ah}
\end{equation}
and
\begin{displaymath}
  l_{h, \theta}(v_h) := \sum_{K \in \MTh} \int_K f_h v_h \d{\bm{x}} + \theta
  \sum_{e \in \MEhB} \int_e g \aver{A\nabla v_h} \d{\bm{s}} + \sum_{e
  \in \MEhB} \int_e \mu h_e^{-1}  g \jump{v_h} \d{s}.
\end{displaymath}
Here $\mu$ is the penalty parameter.  We refer to
\cite{Arnold2002unified,Arnold1982interior} for the derivation of the
bilinear form for the interior penalty method.  The forms with $\theta
= -1/1$ are known as the symmetric/nonsymmetric interior penalty DG
methods. 

The convergence analysis follows from the standard procedure under the
Lax-Milgram framework. Let $U_h := U_h^m + H^2(\Omega)$, and we define
the following \substitute{energy norms}{mesh-dependent energy norms}
for the error estimation, which read
\begin{displaymath}
  \DGenorm{v_h}^2 := \sum_{K \in \MTh} \| \nabla v_h \|_{L^2(K)}^2 +
  \sum_{e \in \MEh} h_e^{-1} \| \jump{v_h} \|^2_{L^2(e)}, \quad \forall
  v_h \in U_h, 
\end{displaymath}
and
\begin{displaymath}
  \DGtenorm{v_h}^2 := \DGenorm{v_h}^2 + \sum_{e \in \MEh} h_e \|
  \aver{\nabla v_h} \|^2_{L^2(e)}, \quad \forall v_h \in U_h.
\end{displaymath}
It is noticeable that both norms are equivalent restricted on the
piecewise polynomial space $U_h^m$, i.e.
\begin{equation}
  \DGenorm{v_h} \leq \DGtenorm{v_h} \leq C \DGenorm{v_h}, \quad
  \forall v_h \in U_h^m.
  \label{eq_DGnorm_equi}
\end{equation}
The above equivalence estimate follows from the trace estimate
\eqref{eq_trace} and the inverse estimate \eqref{eq_inverse}. In
addition, 
from \cite[Lemma 2.1]{Arnold1982interior}, we give the relationship
between the energy norm and the $L^2$ norm, which reads
\begin{equation}
  \| v_h \|_{L^2(\Omega)} \leq C \DGenorm{v_h} \leq Ch^{-1} \| v_h
  \|_{L^2(\Omega)}, \quad \forall v_h \in U_h^m.
  \label{eq_DGnormL2bound}
\end{equation}
Because the
reconstructed space $U_h^m$ is a subspace of the standard
discontinuous piecewise polynomial space, 
\substitute{the above steps are standard in the DG framework
\cite{Arnold2002unified}}{the following steps, including the
boundedness and the coercivity of $a_{h, \theta}(\cdot, \cdot)$ and
the Galerkin orthogonality, can be derived with a standard procedure
in the DG framework \cite{Arnold2002unified}.}
\begin{lemma}
  For the symmetric method $\theta = -1$, we let $a_{h, \theta}(\cdot,
  \cdot)$ be defined with a sufficiently large $\mu$, and for the
  nonsymmetric method $\theta = 1$, we let $a_{h, \theta}(\cdot,
  \cdot)$ be defined with any positive $\mu > 0$, there exist
  constants $C$ such that
  \begin{align}
    |a_{h, \theta}(v_h, w_h)| & \leq C \DGtenorm{v_h} \DGtenorm{w_h},
    \quad \forall v_h, w_h \in U_h, \label{eq_ahb} \\
    a_{h, \theta}(v_h, v_h) & \geq C \DGtenorm{v_h}^2, \quad \forall
    v_h \in U_h^m.
    \label{eq_ahc}
  \end{align}
  \label{le_ahbc}
\end{lemma}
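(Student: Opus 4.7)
The plan is to prove boundedness \eqref{eq_ahb} and coercivity \eqref{eq_ahc} separately, treating $\theta = \pm 1$ uniformly for the boundedness argument and splitting cases for the coercivity. The four ingredients I will rely on are the trace estimate \eqref{eq_trace}, the inverse estimate \eqref{eq_inverse}, the norm equivalence \eqref{eq_DGnorm_equi}, and Cauchy--Schwarz/Young inequalities. Throughout, I will use that $A$ is symmetric positive definite, so there exist constants $0 < \alpha \leq \beta$ with $\alpha |\bm{\xi}|^2 \leq \bm{\xi}^\top A \bm{\xi} \leq \beta |\bm{\xi}|^2$.

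For boundedness, I would apply Cauchy--Schwarz termwise in the definition \eqref{eq_ah}. The volume term is immediately bounded by $\beta \bigl(\sum_K \|\nabla v_h\|_{L^2(K)}^2\bigr)^{1/2} \bigl(\sum_K \|\nabla w_h\|_{L^2(K)}^2\bigr)^{1/2} \leq C\DGtenorm{v_h}\DGtenorm{w_h}$. For each consistency/symmetry face term, I would split the weight as $h_e^{1/2} \cdot h_e^{-1/2}$, writing
\begin{displaymath}
\Bigl| \sum_{e \in \MEh} \int_e \aver{A\nabla v_h}\cdot \jump{w_h} \,\mr{d}\bm{s} \Bigr| \leq \Bigl( \sum_{e \in \MEh} h_e \|\aver{A\nabla v_h}\|_{L^2(e)}^2 \Bigr)^{1/2} \Bigl( \sum_{e \in \MEh} h_e^{-1}\|\jump{w_h}\|_{L^2(e)}^2 \Bigr)^{1/2},
\end{displaymath}
which is $\leq C\DGtenorm{v_h}\DGtenorm{w_h}$ directly from the definitions of both norms. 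The penalty term is handled analogously. Summing the four contributions delivers \eqref{eq_ahb}.

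For the coercivity in the nonsymmetric case $\theta = 1$, the consistency and symmetry face integrals cancel exactly, so
\begin{displaymath}
a_{h,1}(v_h, v_h) = \sum_{K \in \MTh} \int_K A\nabla v_h \cdot \nabla v_h \,\mr{d}\bm{x} + \sum_{e \in \MEh} \int_e \mu h_e^{-1} |\jump{v_h}|^2 \,\mr{d}\bm{s} \geq \min(\alpha, \mu) \DGenorm{v_h}^2,
\end{displaymath}
and \eqref{eq_DGnorm_equi} converts this into a bound in terms of $\DGtenorm{v_h}^2$. Any $\mu > 0$ suffices.

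The symmetric case $\theta = -1$ is the main obstacle, because the two face integrals add rather than cancel, giving an indefinite cross term $-2 \sum_{e} \int_e \aver{A\nabla v_h} \cdot \jump{v_h}\, \mr{d}\bm{s}$ that must be absorbed by the volume term and the penalty. Here I would combine the trace estimate \eqref{eq_trace}, the inverse estimate \eqref{eq_inverse} applied on $U_h^m \subset \mb{P}_m$ elementwise, and a finite-overlap counting argument for faces to obtain
\begin{displaymath}
\sum_{e \in \MEh} h_e \|\aver{A\nabla v_h}\|_{L^2(e)}^2 \leq C_{\mr{tr}} \sum_{K \in \MTh} \|\nabla v_h\|_{L^2(K)}^2.
\end{displaymath}
Applying Young's inequality with a small parameter $\varepsilon$ then yields
\begin{displaymath}
2 \Bigl| \sum_{e \in \MEh} \int_e \aver{A\nabla v_h}\cdot \jump{v_h}\,\mr{d}\bm{s} \Bigr| \leq \varepsilon C_{\mr{tr}} \sum_{K} \|\nabla v_h\|_{L^2(K)}^2 + \frac{1}{\varepsilon}\sum_{e} h_e^{-1}\|\jump{v_h}\|_{L^2(e)}^2.
\end{displaymath}
Choosing $\varepsilon$ small enough that $\alpha - \varepsilon C_{\mr{tr}} > 0$, and then $\mu$ large enough that $\mu - 1/\varepsilon > 0$, leaves a strictly positive multiple of $\DGenorm{v_h}^2$ on the right-hand side, and invoking \eqref{eq_DGnorm_equi} one more time upgrades this to $\DGtenorm{v_h}^2$, completing \eqref{eq_ahc}. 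The only subtle point in this final step is that the face-to-element trace bound genuinely requires $v_h$ to be a piecewise polynomial (hence the restriction to $U_h^m$ in the coercivity statement, while boundedness holds on the larger space $U_h$).
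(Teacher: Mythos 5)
Your proof is correct and is precisely the standard interior-penalty argument that the paper itself invokes by reference to the DG literature (it omits the proof, noting that $U_h^m$ is a subspace of the usual piecewise polynomial space so the steps are standard): termwise Cauchy--Schwarz with the $h_e^{1/2}\cdot h_e^{-1/2}$ splitting for boundedness, exact cancellation of the face terms for $\theta=1$, and trace/inverse estimates plus Young's inequality with a sufficiently large $\mu$ for $\theta=-1$. Your closing remark correctly identifies why coercivity is restricted to $U_h^m$ while boundedness holds on all of $U_h$.
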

\substitute{}{We can also share the estimation of the penalty
parameter in the symmetric interior penalty DG method, i.e. there
exists a threshold $\mu_0(m, C_\sigma, A)$ such that $\mu \geq \mu_0$
yields the coercivity \eqref{eq_ahc}. We refer to \cite[Remark 12 and
Remark 17]{Epshteyn2007estimation} for more details on $\mu_0$, and we
also list the choice of $\mu$ in numerical tests in Section
\ref{sec_numericalresults}.
}
\begin{lemma}
  Let $u \in H^2(\Omega)$ be the exact solution to
  \eqref{eq_elliptic}, and let $u_h \in U_h^m$ be the numerical
  solution to \eqref{eq_dvar}, there holds
  \begin{equation}
    a_{h, \theta}(u - u_h, v_h) = 0, \quad \forall v_h \in U_h^m.
    \label{eq_ahorth}
  \end{equation}
  \label{le_ahorth}
\end{lemma}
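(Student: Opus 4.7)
The plan is to derive the consistency identity $a_{h,\theta}(u, v_h) = l_{h,\theta}(v_h)$ for every $v_h \in U_h^m$ and then subtract the defining equation \eqref{eq_dvar} for $u_h$. Galerkin orthogonality follows immediately from the bilinearity of $a_{h,\theta}$. So the real work is purely the consistency step, which is standard but requires careful bookkeeping of the interface terms.

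First I would take $u \in H^2(\Omega)$, multiply $-\nabla\cdot(A\nabla u) = f$ by $v_h$, integrate element-wise, and apply Green's formula on each $K \in \MTh$ to obtain
\begin{equation*}
  \sum_{K \in \MTh} \int_K A\nabla u \cdot \nabla v_h \, \d{x}
  = \sum_{K \in \MTh} \int_K f v_h \, \d{x}
   + \sum_{K \in \MTh} \int_{\partial K} (A\nabla u) \cdot \un_K \, v_h \, \d{s}.
\end{equation*}
Next I would regroup the boundary sum into a sum over faces via the standard DG identity
\begin{equation*}
  \sum_{K \in \MTh} \int_{\partial K} (A\nabla u)\cdot \un_K\, v_h \,\d{s}
   = \sum_{e \in \MEhI} \int_e \bigl(\aver{A\nabla u}\cdot \jump{v_h}
   + \jump{A\nabla u}\,\aver{v_h}\bigr) \,\d{s}
   + \sum_{e \in \MEhB} \int_e \aver{A\nabla u}\cdot\jump{v_h}\,\d{s},
\end{equation*}
using the boundary-face conventions fixed in Section \ref{sec_preliminaries}. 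Because $u \in H^2(\Omega)$, the normal component of $A\nabla u$ has a single-valued trace on every interior face, so $\jump{A\nabla u}|_e = 0$ for $e \in \MEhI$, and the $\aver{v_h}$ term drops out.

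At this point the diffusion and face contributions match the first two terms in \eqref{eq_ah}. The remaining two terms in $a_{h,\theta}(u,v_h)$ involve $\jump{u}$. Since $u$ is continuous across $\MEhI$, we have $\jump{u}|_e = 0$ on interior faces, while on $e \in \MEhB$ the convention gives $\jump{u}|_e = g\un$. Thus both the symmetrization term $\theta\sum_e\int_e \aver{A\nabla v_h}\cdot \jump{u}\,\d{s}$ and the penalty term $\sum_e\int_e \mu h_e^{-1}\jump{u}\cdot\jump{v_h}\,\d{s}$ collapse to sums over $\MEhB$ in which $u|_e$ is replaced by $g$, reproducing precisely the two boundary contributions to $l_{h,\theta}(v_h)$. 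Combining everything yields $a_{h,\theta}(u,v_h) = l_{h,\theta}(v_h)$ (tacitly identifying $f_h$ with $f$ in the load form, as is standard for the consistency statement). Subtracting \eqref{eq_dvar} gives \eqref{eq_ahorth}.

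The only mildly delicate point — and the one I would double-check carefully — is the justification that $\jump{A\nabla u}|_e$ vanishes on interior faces. This uses the fact that, for $u \in H^2(\Omega)$ solving \eqref{eq_elliptic}, the flux $A\nabla u$ belongs to $H(\mathrm{div};\Omega)$, so its normal trace is single-valued across interelement boundaries in $L^2(e)$; no extra regularity beyond what is assumed is needed. Everything else is just bookkeeping of jumps and averages that is entirely parallel to the classical IPDG consistency argument \cite{Arnold2002unified}, and the fact that $U_h^m$ is a subspace of the standard broken polynomial space means nothing about the reconstruction enters the proof.
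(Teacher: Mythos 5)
Your proof is correct: the element-wise integration by parts, the face-regrouping identity, the vanishing of $\jump{A\nabla u}$ and $\jump{u}$ on interior faces, and the boundary-face bookkeeping together give the consistency identity $a_{h,\theta}(u,v_h)=l_{h,\theta}(v_h)$ (under the tacit identification $f_h=f$, which the lemma's exact-orthogonality claim requires), and subtracting \eqref{eq_dvar} yields \eqref{eq_ahorth}. The paper omits the proof entirely, deferring to the standard IPDG framework of \cite{Arnold2002unified}; your argument is exactly that standard consistency proof, so there is nothing to compare beyond noting that you correctly observe that the reconstruction plays no role here since $U_h^m$ is a subspace of the broken polynomial space.
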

Combining the approximation result \eqref{eq_Rmapp} of the space
$U_h^m$ and Lemma \ref{le_ahbc} - Lemma \ref{le_ahorth} yields the
desired error estimation.
\begin{theorem}
  Let $u \in H^{m+1}(\Omega)$ be the exact solution to
  \eqref{eq_elliptic}, and let $u_h \in U_h^m$ be the numerical
  solution to \eqref{eq_dvar}, and let the penalty parameter $\mu$ be
  taken as in Lemma \ref{le_ahbc}, there exists a constant $C$ such
  that
  \begin{equation}
    \DGenorm{u - u_h} \leq C \lLambda_m h^m \| u \|_{H^{m+1}(\Omega)}.
    \label{eq_DGerror}
  \end{equation}
  In addition, for the symmetric method $\theta = -1$, there holds
  \begin{equation}
    \|u - u_h \|_{L^2(\Omega)} \leq C \lLambda_m h^{m+1} \|
    u \|_{H^{m+1}(\Omega)}.
    \label{eq_L2error}
  \end{equation}
  \label{th_ellerror}
\end{theorem}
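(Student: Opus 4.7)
The plan is to prove the two bounds by the standard DG error-analysis pattern: a Cea-type energy estimate for \eqref{eq_DGerror} followed by a Nitsche duality argument for \eqref{eq_L2error}, with the reconstruction approximation estimate \eqref{eq_Rmapp} supplying the $h$ powers throughout.

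First I would establish \eqref{eq_DGerror}. Set $w_h := \mc{R}^m u - u_h \in U_h^m$ and split $u - u_h = (u - \mc{R}^m u) + w_h$. Using coercivity \eqref{eq_ahc} on $w_h$, Galerkin orthogonality \eqref{eq_ahorth}, and boundedness \eqref{eq_ahb}, I get
\begin{equation*}
C\DGtenorm{w_h}^2 \leq a_{h,\theta}(w_h, w_h) = a_{h,\theta}(\mc{R}^m u - u, w_h) \leq C \DGtenorm{u - \mc{R}^m u}\DGtenorm{w_h},
\end{equation*}
so that $\DGtenorm{u - u_h} \leq C \DGtenorm{u - \mc{R}^m u}$ by the triangle inequality. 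It remains to bound the triple norm of $u - \mc{R}^m u$. The two volumetric and jump contributions forming $\DGenorm{\cdot}$ are controlled element-wise by \eqref{eq_Rmapp} with $q=0,1$ after using the trace estimate \eqref{eq_trace} on each face (noting $\jump{u} = 0$ in the interior and $u = g$ on $\partial\Omega$, so jumps of $u - \mc{R}^m u$ reduce to jumps of $\mc{R}^m u$, which in turn are bounded by $\|u - \mc{R}^m u\|_{L^2(\partial K)}$ summed over the two adjacent elements). The extra average term $\sum_e h_e \|\aver{\nabla(u - \mc{R}^m u)}\|_{L^2(e)}^2$ is handled in the same way via \eqref{eq_trace} applied to $\nabla(u - \mc{R}^m u) \in H^1$. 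Summing and invoking \eqref{eq_Rmapp} gives $\DGtenorm{u - \mc{R}^m u} \leq C \overline{\Lambda}_m h^m \|u\|_{H^{m+1}(\Omega)}$, and combining with \eqref{eq_DGnorm_equi} yields \eqref{eq_DGerror}.

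For the $L^2$ estimate in the symmetric case $\theta = -1$, I would use the Aubin--Nitsche duality. Let $w \in H_0^1(\Omega)$ solve the adjoint problem $-\nabla\cdot(A \nabla w) = u - u_h$ in $\Omega$ with $w = 0$ on $\partial \Omega$; by elliptic regularity on the convex domain $\Omega$ and the symmetry of $A$, $w \in H^2(\Omega)$ with $\|w\|_{H^2(\Omega)} \leq C\|u - u_h\|_{L^2(\Omega)}$. Testing with $u - u_h$ and integrating by parts element by element (the boundary and interelement face terms cancel or combine into the standard DG form because $w \in H^2 \cap H_0^1$ has no jumps), one obtains $\|u - u_h\|_{L^2(\Omega)}^2 = a_{h,-1}(u - u_h, w)$. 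Symmetry of $a_{h,-1}$ together with Galerkin orthogonality allows me to subtract any $v_h \in U_h^m$ from $w$; taking $v_h = \mc{R}^m w$ and applying boundedness \eqref{eq_ahb} and \eqref{eq_DGerror} to $u - u_h$, plus the same triple-norm approximation argument as above to $w - \mc{R}^m w$ with $\|w\|_{H^2}$ in place of $\|u\|_{H^{m+1}}$, gives the additional factor of $h$ and completes \eqref{eq_L2error}.

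The routine parts are the algebra and constants; the main technical point that needs care is converting \eqref{eq_Rmapp}, which is an element-local Sobolev estimate on the patch $S(K)$, into a global bound on $\DGtenorm{u - \mc{R}^m u}$. This requires handling the face jump and average terms through the trace inequality \eqref{eq_trace} and then controlling the resulting overlap of patches $S(K)$ by a constant depending only on the shape-regularity parameter $C_\sigma$ and the threshold $\# S$, so that summing the local estimates from \eqref{eq_Rmapp} does not introduce any $h$-dependent factor. Once this is set up, the rest of the argument follows the standard interior-penalty DG template.
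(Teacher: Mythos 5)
Your proposal is correct and follows essentially the same route as the paper: a Cea-type energy estimate obtained from coercivity, boundedness and Galerkin orthogonality with the choice $v_h = \mc{R}^m u$, the approximation property \eqref{eq_Rmapp} combined with the trace estimate \eqref{eq_trace} to bound $\DGtenorm{u - \mc{R}^m u}$, and the standard Aubin--Nitsche duality argument for the $L^2$ bound in the symmetric case. Your added attention to the bounded overlap of the patches $S(K)$ when summing the local estimates is a detail the paper leaves implicit, but it does not change the argument.
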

\begin{proof}
  From Lemma \ref{le_ahbc} - Lemma \ref{le_ahorth}, one can find
  that
  \begin{displaymath}
    \DGtenorm{u - u_h} \leq C \DGtenorm{u - v_h}, \quad\forall v_h \in
    U_h^m.
  \end{displaymath}
  Let $v_h := \mc{R}^m u$. 
  By the approximation property \eqref{eq_Rmapp} and the trace
  estimate \eqref{eq_trace}, we have that
  \begin{displaymath}
    \DGtenorm{u - v_h} \leq C{\lLambda}_m h^m \| u
    \|_{H^{m+1}(\Omega)}.
  \end{displaymath}
  Collecting the above two estimates gives the error estimate
  \eqref{eq_DGerror}. 

  For the symmetric scheme with $\theta = -1$, 
  the $L^2$ error estimation \eqref{eq_L2error} can be obtained by the
  standard dual argument, which completes the proof.
\end{proof}

The error estimates for the nonsymmetric/symmetric
interior penalty DG methods have been completed. 
From Theorem \ref{th_ellerror}, the numerical solution has the optimal
convergence rates, as the standard finite element methods. In
addition, the proposed method also shares the flexibility in the 
mesh partition, i.e. the polygonal elements are allowed in the scheme.
Since the number of degrees of freedom is fixed as $n_e$ for any $m
\geq 1$, one of the advantages is that the scheme with $U_h^m$ will enjoy
a better efficiency on the finite element approximation, see
Subsection \ref{subsec_convergence} for a numerical comparison on the
proposed method and the standard DG method.

Next, we focus on the resulting linear system arising in our method.
We begin by estimating the condition number. 
Let $A_{m, \theta}$ be the linear system with respect to the bilinear
form $a_{h, \theta}(\cdot, \cdot)$ of order $m$. As commented in Section
\ref{sec_space}, $U_h^m$ is spanned by the group of basis functions
$\{ \lambda_j \}_{j = 1}^{n_e}$. Hence, $A_{m, \theta}$ can be
expressed
as $A_{m, \theta} = (a_{h,\theta}(\lambda_i, \lambda_j))_{n_e \times
n_e}$. We define $M_m := (\lambda_i, \lambda_j)_{n_e \times n_e}$ as
the mass matrix. For any $\bmr{v} = \{ v_j\}_{j=1}^{n_e} \in
\mb{R}^{n_e}$, we can associate $\bmr{v}$ with a finite element
function $v_h \in U_h^m$ such that $v_h = \sum_{j =1}^{n_e} v_j
\lambda_j$, i.e. $\bmr{v}$ is the vector of coefficients in
\eqref{eq_Rmvexpansion} for $v_h$. 
Conversely, any $v_h \in U_h^m$ has the decomposition $v_h =
\sum_{j=1}^{n_e} v_h(\bm{x}_{K_j})\lambda_j$, and we define 
$\bmr{v} :=\{ v_h(\bm{x}_{K_j}) \}_{j=1}^{n_e} \in
\mb{R}^{n_e}$ as the corresponding vector for $v_h$. We have that
\begin{displaymath}
  a_{h, \theta}(v_h, v_h) = \bmr{v}^T A_{m, \theta} \bmr{v}, \quad (v_h,
  v_h)_{L^2(\Omega)} = \bmr{v}^T M_m \bmr{v}, \quad \forall v_h \in
  U_h^m.
\end{displaymath}
Let us bound the term $(\bmr{v}^T M_m \bmr{v})/(\bmr{v}^T \bmr{v})$,
which is established on the constraint in \eqref{eq_leastsquares}. 
From \eqref{eq_leastsquares} and the inverse estimate
\eqref{eq_inverse}, we find that
\begin{equation}
  \begin{aligned}
    \bmr{v}^T\bmr{v} = \sum_{K \in \MTh} (v_h(\bm{x}_K))^2 \leq
    \sum_{K \in \MTh} \| v_h \|_{L^{\infty}(K)}^2 \leq Ch^{-d} \sum_{K
    \in \MTh} \| v_h \|_{L^2(K)}^2 = Ch^{-d} \bmr{v}^T M_m \bmr{v},
    \quad \forall \bmr{v} \in \mb{R}^{n_e}.
  \end{aligned}
  \label{eq_vTvupper}
\end{equation}
Again by the constraint in \eqref{eq_leastsquares}, we deduce that
\begin{equation}
    \bmr{v}^T M_m \bmr{v} 
    \leq Ch^d \sum_{K \in \MTh} \| v_h
    \|_{L^{\infty}(K)}^2  
    \leq Ch^d \Lambda_m^2 \sum_{K \in \MTh} \max_{\bm{x} \in I(K)} (
    v_h(\bm{x}))^2 \leq Ch^d {\overline{\Lambda}}_m^2 \bmr{v}^T
    \bmr{v}, \quad \forall \bmr{v} \in \mb{R}^{n_e}.
  \label{eq_vTvlower}
\end{equation}
Combining \eqref{eq_vTvupper} and \eqref{eq_vTvlower} yields that
\begin{equation}
  Ch^d \leq \frac{\bmr{v}^T M_m \bmr{v} }{ \bmr{v}^T \bmr{v}} \leq
  C{\overline{\Lambda}}_m^2 h^d, \quad \forall  \bmr{v} \neq \bmr{0} \in
  \mb{R}^{n_e}.
  \label{eq_vTMv}
\end{equation}
For the symmetric case $\theta = -1$, we have that
\begin{equation}
  \frac{\bmr{v}^T A_{m, \theta} \bmr{v}}{\bmr{v}^T \bmr{v}} =
  \frac{a_{h,\theta}( {v}_h,
   {v}_h)}{ ({v}_h, {v}_h)_{L^2(\Omega)} }
   \frac{\bmr{v}^T M_m \bmr{v}}{ \bmr{v}^T \bmr{v}}, \quad \forall
   \bmr{v} \neq \bmr{0} \in \mb{R}^{n_e}
   \label{eq_vTAv}
\end{equation}
Collecting the boundedness \eqref{eq_ahb}, the coercivity
\eqref{eq_ahc} and the estimate \eqref{eq_DGnormL2bound} immediately
brings us that 
\begin{displaymath}
  \| v_h \|_{L^2(\Omega)}^2 \leq C a_{h, \theta}(v_h, v_h) \leq C
  h^{-2} \| v_h \|_{L^2(\Omega)}^2, \quad \forall v_h \in U_h^m.
\end{displaymath}
From \eqref{eq_vTMv}, there holds
\begin{equation}
  Ch^d \leq \frac{\bmr{v}^TA_{m, \theta} \bmr{v} }{\bmr{v}^T \bmr{v}}
  \leq Ch^{d-2} {\overline{\Lambda}}_m^2, \quad \forall 
  \bmr{v} \neq \bmr{0} \in \mb{R}^{n_e},
  \label{eq_Amtheta}
\end{equation}
which indicates that $\kappa(A_{m, \theta}) \leq
C{\overline{\Lambda}}_m^2 h^{-2}$ for the symmetric case.


Next, we turn to the nonsymmetric case, i.e. $\theta = 1$. We split
the bilinear form $a_{h, \theta}(\cdot, \cdot)$ into a symmetric part
and an antisymmetric part. We define the following bilinear forms 
\begin{displaymath}
  a_{h, \theta}^S(v_h, w_h) := \sum_{K \in \MTh} \int_K A\nabla v_h
  \cdot \nabla w_h \d{x} + \sum_{e \in \MEh} \int_e \mu h_e^{-1}
  \jump{v_h} \cdot \jump{w_h} \d{s}, \quad \forall v_h, w_h \in U_h^m
\end{displaymath}
and
\begin{displaymath}
  a_{h, \theta}^N(v_h, w_h) := \sum_{e \in \MEh} \int_e \aver{A\nabla
  v_h} \cdot \jump{w_h} \d{s} -  \sum_{e \in \MEh} \int_e \aver{A\nabla
  w_h} \cdot \jump{v_h} \d{s}, \quad \forall v_h, w_h \in U_h^m.
\end{displaymath}
Clearly, there holds $a_{h, \theta}(v_h, w_h) = a_{h, \theta}^S(v_h,
w_h) - a_{h, \theta}^N(v_h, w_h)$ for $\forall v_h, w_h \in U_h^m$.
We denote by $A_{m, \theta}^S$ and by $A_{m, \theta}^N$ the
symmetric/antisymmetric linear
systems \justsubstitute{with respect to}{to} $a_{h, \theta}^S(\cdot, \cdot)$ and $a_{h,
\theta}^N(\cdot, \cdot)$, respectively. We have that $A_{m, \theta} =
A_{m, \theta}^S - A_{m, \theta}^N$. From
Lemma \ref{le_ahbc}, it can be easily seen that 
\begin{equation}
  \begin{aligned}
    |a_{h, \theta}^{S}(v_h, w_h)| + |a_{h, \theta}^{N}(v_h, w_h)| &\leq
    C \DGenorm{v_h}\DGenorm{w_h}, && \forall v_h, w_h \in U_h^m, \\
    a_{h, \theta}^S(v_h, v_h) & \geq C
    \substitute{\DGenorm{v_h}}{\DGenorm{v_h}^2}, && \forall
    v_h \in U_h^m.
  \end{aligned}
  \label{eq_aSaNbc}
\end{equation}
Similar to $A_{m, \theta}$ with $\theta = -1$, the symmetric part
$A_{m, \theta}^S$ also satisfies the estimate \eqref{eq_Amtheta},
which gives the estimate $\kappa(A_{m, \theta}^S) \leq C
{\overline{\Lambda}}_m^2 h^{-2}$.

For any $\bmr{v}, \bmr{w} \in \mb{R}^{n_e}$ with $\| \bmr{v} \|_{l^2}
= \| \bmr{w} \|_{l^2} = 1$, we have that 
\begin{displaymath}
  \begin{aligned}
    \bmr{v}^T A_{h, \theta}^N \bmr{w} & = a_{h, \theta}^N(v_h, w_h)
    \leq C \DGenorm{v_h}\DGenorm{w_h} \leq Ch^{-2} \| v_h
    \|_{L^2(\Omega)} \| w_h \|_{L^2(\Omega)} \\
    & \leq C h^{-2} (\bmr{v}^T M_m \bmr{v})^{1/2} (\bmr{w}^T M_m
    \bmr{w})^{1/2} \leq C  {\overline{\Lambda}}_m^2 h^{d-2}.
  \end{aligned}
\end{displaymath}
Since $A_{m, \theta}^N$ is antisymmetric, all eigenvalues are purely
imaginary. Then, the spectral radius of $A_{m, \theta}^N$ satisfies
that $\rho(A_{m, \theta}^N) \leq  C {\overline{\Lambda}}_m^2h^{d-2}$.
From $A_{m, \theta} = A_{m, \theta}^S - A_{m, \theta}^N$, we conclude
that $\kappa(A_{m, \theta}) \leq C{\overline{\Lambda}}_m^2 h^{-2}$.

Consequently, we have the following estimate to the upper bound of the
condition number.
\begin{theorem}
  Let the penalty parameter $\mu$ be defined as in Lemma
  \ref{le_ahbc}, there exists a constant $C$ such that
  \begin{equation}
    \kappa(A_{m, \theta}) \leq C {\overline{\Lambda}}_m^2 h^{-2},
    \quad \theta = \pm 1.
    \label{eq_kappaA}
  \end{equation}
  \label{th_kappaA}
\end{theorem}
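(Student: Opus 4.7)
The plan is to bound the Rayleigh quotient $\bmr{v}^T A_{m,\theta} \bmr{v} / \bmr{v}^T \bmr{v}$ from above and below, and then separately bound $\kappa(A_{m,\theta})$ by combining these with an analysis of the symmetric/antisymmetric splitting when $\theta = 1$. The key bridge is a two-sided estimate on the mass matrix $M_m$ of the form $Ch^d \bmr{v}^T \bmr{v} \leq \bmr{v}^T M_m \bmr{v} \leq C \Lambda_m^2 h^d \bmr{v}^T \bmr{v}$, which transfers between the Euclidean norm on the coefficient vector and the $L^2$ norm on the reconstructed function. I would derive the upper half ($\bmr{v}^T \bmr{v} \leq C h^{-d} \bmr{v}^T M_m \bmr{v}$) from the point-value identity $v_h(\bm{x}_K) = \bmr{v}_K$ enforced by the constraint in \eqref{eq_leastsquares}, combined with the inverse estimate \eqref{eq_inverse} that converts $\|v_h\|_{L^\infty(K)}$ to $\|v_h\|_{L^2(K)}$. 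The lower half uses the reconstruction stability bound \eqref{eq_recons_stability}, which controls $\|v_h\|_{L^\infty(S(K))}$ by $\Lambda_m \max_{\bm{x} \in I(K)} |v_h(\bm{x})|$, and then the inverse estimate again.

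For the symmetric case $\theta = -1$, I would combine the coercivity and boundedness \eqref{eq_ahb}--\eqref{eq_ahc} with the relation \eqref{eq_DGnormL2bound} between $\DGenorm{\cdot}$ and $\|\cdot\|_{L^2(\Omega)}$ to obtain
\begin{displaymath}
  C \| v_h \|_{L^2(\Omega)}^2 \leq a_{h,\theta}(v_h, v_h) \leq C h^{-2} \| v_h \|_{L^2(\Omega)}^2, \quad \forall v_h \in U_h^m.
\end{displaymath}
Translating to the matrix $A_{m,\theta}$ through $(v_h, v_h)_{L^2(\Omega)} = \bmr{v}^T M_m \bmr{v}$ and applying the mass matrix bound yields $C h^d \leq \bmr{v}^T A_{m,\theta} \bmr{v}/\bmr{v}^T \bmr{v} \leq C \Lambda_m^2 h^{d-2}$, from which \eqref{eq_kappaA} follows.

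The nonsymmetric case $\theta = 1$ is the more delicate step, since $A_{m,\theta}$ is no longer symmetric and Rayleigh quotient estimates do not directly control the spectrum. I would handle it by the splitting $A_{m,\theta} = A^S_{m,\theta} - A^N_{m,\theta}$ into its symmetric part (which contains the volume, penalty, and mass contributions) and antisymmetric part (the difference of the two consistency terms on faces). The symmetric part satisfies a Rayleigh bound of the same form as in the $\theta = -1$ case by \eqref{eq_aSaNbc}, so $\kappa(A^S_{m,\theta}) \leq C \Lambda_m^2 h^{-2}$. For the antisymmetric part, I would bound bilinear pairings $\bmr{v}^T A^N_{m,\theta} \bmr{w}$ by $a_{h,\theta}^N(v_h, w_h) \leq C \DGenorm{v_h} \DGenorm{w_h}$ and cascade through \eqref{eq_DGnormL2bound} and the mass-matrix bound to obtain $\rho(A^N_{m,\theta}) \leq C \Lambda_m^2 h^{d-2}$, using that all eigenvalues of a real antisymmetric matrix are purely imaginary, so the spectral radius equals the operator norm in the Euclidean inner product. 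Adding the bounds on $A^S_{m,\theta}$ and $A^N_{m,\theta}$ then yields \eqref{eq_kappaA}.

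The main obstacle is the lower mass matrix bound, since it is the only place where the constant $\Lambda_m$ enters: without the constrained interpolation condition $v_h(\bm{x}_K) = \bmr{v}_K$, one would have no direct way to pass from coefficient norms to $L^\infty$ norms on the patch, and without the stability estimate \eqref{eq_recons_stability} the reconstruction could be arbitrarily large relative to its collocation values. All subsequent steps are standard DG manipulations.
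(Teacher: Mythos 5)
Your proposal is correct and follows essentially the same route as the paper: the two-sided mass-matrix estimate from the barycenter constraint and the reconstruction stability, the Rayleigh-quotient argument for $\theta=-1$, and the symmetric/antisymmetric splitting with a spectral-radius bound on the antisymmetric part for $\theta=1$. The only nitpick is that $a_{h,\theta}^S$ contains no mass contribution, merely the volume and penalty terms, but this does not affect the argument.
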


By \eqref{eq_kappaA}, the condition number is the same as in the 
standard finite element method, i.e. $O(h^{-2})$. As $h$ tends to $0$,
the matrix $A_{m, \theta}$ becomes ill-conditioned. Hence, an
effective preconditioner is desired in finite element methods
especially for the high-order accuracy. 
In the reconstructed approximation, one of attractive features 
is that the
matrix $A_{m, \theta}$ always has the size $n_e \times n_e$,
independent of the degree $m$. 
Consider the matrix that corresponds to the bilinear form $a_{h,
\theta}(\cdot, \cdot)$ over the piecewise constant spaces $U_h^0
\times U_h^0$, and clearly, this matrix still has the size $n_e \times
n_e$. We will show that this matrix can be used as an effective
preconditioner.  We define \justsubstitute{a}{the} bilinear form
$a_h^0(\cdot, \cdot)$ as 
\begin{equation}
  a_h^0(v_h, w_h) = \sum_{e \in \MEh} \int_e h_e^{-1} \jump{v_h} \cdot
  \jump{w_h} \d{s}, \quad \forall v_h, w_h \in U_h^0,
  \label{eq_ah0}
\end{equation}
which corresponds to the bilinear form $a_{h, \theta}(\cdot, \cdot)$
over the spaces $U_h^0 \times U_h^0$ in the sense that 
\justsubstitute{there holds}{}
$a_{h, \theta}(v_h, w_h) = \mu a_h^0(v_h, w_h)$ for $\forall v_h, w_h
\in U_h^0$ and $\theta = \pm 1$. Because $a_h^0(v_h, v_h) =
\DGenorm{v_h}^2$ for $\forall v_h \in U_h^0$, $a_h^0(\cdot, \cdot)$ is
bounded and coercive under the energy norm $\DGenorm{\cdot}$.
Let $A_0$ be the matrix of $a_h^0(\cdot, \cdot)$.
Clearly, $A_0$ is a symmetric positive definite matrix, and we will
show that 
\justsubstitute{$A_0^{-1}$ is an effective preconditioner such that}
$\kappa(A_0^{-1}A_{m, \theta}) \leq C{\overline{\Lambda}}_m^2$ for
both $\theta = \pm 1$.

For any piecewise constant function $v_h \in U_h^0$, we know that
$\mc{R}^m v_h \in U_h^m$ is a piecewise $m$-th degree polynomial
function after the reconstruction. The following equivalence
between $\DGenorm{v_h}$ and $\DGenorm{\mc{R}^m v_h}$ is crucial in the
analysis.
\begin{lemma}
  There exist constants $C$ such that 
  \begin{equation}
    \DGenorm{v_h} \leq C \DGenorm{\mc{R}^m v_h} \leq C
    {\overline{\Lambda}}_m \DGenorm{v_h},
    \quad \forall v_h \in U_h^0.
    \label{eq_DGenormCequi}
  \end{equation}
  \label{le_DGenormCequi}
\end{lemma}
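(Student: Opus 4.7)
The plan is to prove the two inequalities separately. A preliminary observation is that since $v_h \in U_h^0$ is piecewise constant, $\nabla v_h \equiv 0$, so $\DGenorm{v_h}^2 = \sum_{e \in \MEh} h_e^{-1}\|\jump{v_h}\|_{L^2(e)}^2$, i.e.\ a weighted measure of the jumps of the collocation values $\{v_h(\bm{x}_K)\}$ across faces. A second observation, used repeatedly, is that the constraint in \eqref{eq_leastsquares} yields $\mc{R}_K^m c = c$ for every constant $c$, hence by linearity $\mc{R}^m(v_h - v_h|_K) = \mc{R}^m v_h - v_h|_K$ on $S(K)$.

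For the lower bound $\DGenorm{v_h} \leq C \DGenorm{\mc{R}^m v_h}$, fix an interior face $e = \partial K_1 \cap \partial K_2$ and let $p_i := \mc{R}^m v_h|_{K_i}$. Using $p_i(\bm{x}_{K_i}) = v_h|_{K_i}$ and inserting face averages $\bar p_i^e := |e|^{-1} \int_e p_i \, \mathrm{d}s$, I decompose
\begin{displaymath}
v_h|_{K_1} - v_h|_{K_2} = (p_1(\bm{x}_{K_1}) - \bar p_1^e) + (\bar p_1^e - \bar p_2^e) + (\bar p_2^e - p_2(\bm{x}_{K_2})).
\end{displaymath}
Cauchy--Schwarz bounds the middle term by $|e|^{-1/2}\|\jump{\mc{R}^m v_h}\|_{L^2(e)}$, while the outer terms are controlled by $C h_{K_i}\|\nabla p_i\|_{L^\infty(K_i)}$ and then by $\|\nabla \mc{R}^m v_h\|_{L^2(K_i)}$ through the inverse estimate \eqref{eq_inverse}. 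Squaring, multiplying by $|e| \sim h_e^{d-1}$ and $h_e^{-1}$, and summing over all faces (boundary faces being handled in the same fashion, with the second piece dropped) yields the result without any $\Lambda_m$ factor, consistent with the sharper left constant in the statement.

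For the upper bound $\DGenorm{\mc{R}^m v_h} \leq C\Lambda_m \DGenorm{v_h}$, I apply Lemma \ref{le_recons_stability} (whose proof only uses values of $g$ at the collocation points in $I(K)$ and thus extends verbatim to piecewise constants) to $v_h - v_h|_K$, to obtain
\begin{displaymath}
\|\mc{R}^m v_h - v_h|_K\|_{L^\infty(S(K))} \leq C \overline{\Lambda}_m \max_{K' \in S(K)} |v_h|_{K'} - v_h|_K|.
\end{displaymath}
This $L^\infty$ estimate is converted into an $L^2$ bound on $\|\nabla \mc{R}^m v_h\|_{L^2(K)}$ via the inverse estimate \eqref{eq_inverse} applied to the polynomial $\mc{R}^m v_h - v_h|_K$, and into an $L^2(e)$ bound on $\|\mc{R}^m v_h - v_h|_K\|_{L^2(e)}$ via the trace estimate \eqref{eq_trace}. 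The jump contribution is then treated through the splitting $\jump{\mc{R}^m v_h}|_e = (\mc{R}^m v_h - v_h|_{K_1})\un^+ + (\mc{R}^m v_h - v_h|_{K_2})\un^- + \jump{v_h}|_e$. After these reductions, everything comes down to controlling $\max_{K' \in S(K)} (v_h|_{K'} - v_h|_K)^2$ by local face-jump quantities of the form $h^{1-d}\|\jump{v_h}\|_{L^2(e)}^2$.

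I expect this last step to be the main obstacle. Since $S(K)$ is built by recursively taking vertex neighbors, any $K' \in S(K)$ is reachable from $K$ by a chain $K = K_0, K_1, \ldots, K_n = K'$ of elements inside a slight enlargement of $S(K)$, and by quasi-uniformity both $n$ and the number of face-sharing substeps needed per vertex-adjacency are uniformly bounded. A telescoping sum $v_h|_{K'} - v_h|_K = \sum_i (v_h|_{K_{i+1}} - v_h|_{K_i})$ combined with Cauchy--Schwarz then yields $(v_h|_{K'} - v_h|_K)^2 \leq C \sum_{e \in E(K)} h^{1-d}\|\jump{v_h}\|_{L^2(e)}^2$ over a finite local face set $E(K)$. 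Using the finite-overlap property of the patches to collapse the resulting double sum gives $\DGenorm{\mc{R}^m v_h}^2 \leq C \overline{\Lambda}_m^2 \DGenorm{v_h}^2$, and since $\Lambda_m \geq 1$ (take $p$ constant in \eqref{eq_Lambda}) one has $\overline{\Lambda}_m \leq C \Lambda_m$, delivering the claimed constant.
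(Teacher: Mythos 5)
Your proposal is correct and follows essentially the same route as the paper's proof: the lower bound is obtained by telescoping through values on each face using the barycenter constraint, and the upper bound by reducing both the gradient and jump terms to the local oscillation $\max_{K' \in S(K)} |v_h|_{K'} - v_h|_K|$ and then controlling that oscillation by a chain of face jumps inside the patch, which is exactly the paper's estimate \eqref{eq_vmaxmin}. The only cosmetic differences are that you insert face averages rather than a point value $\bm{x}_e$ on the face, and you extract the $\Lambda_m$ factor by applying Lemma \ref{le_recons_stability} to $v_h - v_h|_K$ rather than directly from the definition \eqref{eq_Lambda}; if anything yours is the more careful variant, since a least-squares fit need not interpolate $v_h$ at every point of $I(K)$ as the paper's argument tacitly assumes.
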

\begin{proof}
  We first prove the lower bound in \eqref{eq_DGenormCequi}, i.e. 
  \begin{displaymath}
    \sum_{e \in \MEh} h_e^{-1} \| \jump{v_h} \|_{L^2(e)}^2 \leq C (
    \sum_{K \in \MTh} \| \nabla \mc{R}^m v_h \|_{L^2(K)}^2 + \sum_{e
    \in \MEh} h_e^{-1} \| \jump{\mc{R}^m v_h} \|_{L^2(e)}^2), \quad
    \forall v_h \in U_h^0.
  \end{displaymath}
  \justsubstitute{For any interior $e \in \MEhI$ shared by $K_0, K_1$,
  i.e. $e = \partial K_0 \cap \partial K_1$.}{For any interior face $e
  \in \MEhI$, we let $e$ be shared by two elements $K_0, K_1$, i.e. $e
  = \partial K_0 \cap \partial K_1$.}
  From the constraint in \eqref{eq_leastsquares},  we know that
  $(\mc{R}^m v_h)(\bm{x}_{K_0}) = (\mc{R}_{K_0}^m v_h)(\bm{x}_{K_0}) =
  v_h(\bm{x}_{K_0})$ and $(\mc{R}^m v_h)(\bm{x}_{K_1}) =
  (\mc{R}_{K_1}^m v_h)(\bm{x}_{K_1}) = v_h(\bm{x}_{K_1})$.
  Let $\bm{x}_e$ be any point on $e$, together with the inverse
  estimate \eqref{eq_inverse}, we deduce that
  \begin{align*}
    h_e^{-1} &\| \jump{v_h} \|_{L^2(e)}^2  \leq C h_e^{d-2}
    (v_h(\bm{x}_{K_0}) - v_h(\bm{x}_{K_1}))^2 = C h_e^{d-2} (
    (\mc{R}_{K_0}^m v_h)(\bm{x}_{K_0}) - (\mc{R}_{K_1}^m
    v_h)(\bm{x}_{K_1}) )^2 \\
    & \leq  C h_e^{d-2} ( ( (\mc{R}_{K_0}^m v_h)(\bm{x}_{K_0}) -
    (\mc{R}_{K_0}^m v_h)(\bm{x}_e)  )^2 \\
    & \hspace{50pt} + ( (\mc{R}_{K_1}^m
    v_h)(\bm{x}_{K_1}) - (\mc{R}_{K_1}^m v_h)(\bm{x}_e) )^2 + (
    (\mc{R}_{K_0}^m v_h)(\bm{x}_e) - (\mc{R}_{K_1}^m v_h)(\bm{x}_e)
    )^2) \\
    & \leq Ch_e^{d-2}( h_{K_0}^2 \| \nabla \mc{R}_{K_0}^m v_h
    \|_{L^{\infty}(K_0)}^2 + h_{K_1}^2 \| \nabla \mc{R}_{K_1}^m v_h
    \|_{L^{\infty}(K_1)}^2  + \| \jump{\mc{R}^m v_h}
    \|_{L^{\infty}(e)}^2) \\
    & \leq C ( \| \nabla \mc{R}_{K_0}^m v_h \|_{L^{2}(K_0)}^2  + 
    \| \nabla \mc{R}_{K_1}^m v_h \|_{L^{2}(K_1)}^2 + h_e^{-1} \|
    \jump{\mc{R}^m v_h} \|_{L^{2}(e)}^2).
  \end{align*}
  For any boundary face $e \in \MEhB$, we let $e$ be a face of an
  element $K_e$. Similarly, we have that $(\mc{R}^m v_h)(\bm{x}_{K_e})
  = (\mc{R}_{K_e}^m v_h)(\bm{x}_{K_e}) = v_h(\bm{x}_{K_e})$. Let
  $\bm{x}_e$ be any point on $e$, we have that
  \begin{align*}
    h_e^{-1} \| v_h ||_{L^2(e)}^2 & = h_e^{d-2}
    (v_h(\bm{x}_{K_e}))^2 = h_e^{d-2}
    ( (\mc{R}_{K_e}^m v_h)(\bm{x}_{K_e}))^2 \\
    & \leq  C h_e^{d-2} ( ( (\mc{R}_{K_e}^m v_h)(\bm{x}_{K_e}) -
    (\mc{R}_{K_e}^m v_h)(\bm{x}_e))^2 +  ((\mc{R}_{K_e}^m
    v_h)(\bm{x}_e))^2) \\
    & \leq C ( \| \nabla \mc{R}_{K_e}^m v_h \|_{L^2(K_e)}^2 + h_e^{-1}
    \| \jump{\mc{R}^m_{K_e} v_h} \|_{L^2(e)}^2).
  \end{align*}
  Summation over all faces yields that $\DGenorm{v_h} \leq C
  \DGenorm{\mc{R}^m v_h}$. The lower bound of \eqref{eq_DGenormCequi}
  is reached.

  Then, we focus on the upper bound, i.e.
  \begin{displaymath}
    \sum_{K \in \MTh} \| \nabla \mc{R}^m v_h \|_{L^2(K)}^2 + \sum_{e
    \in \MEh} h_e^{-1} \| \jump{\mc{R}^m v_h} \|_{L^2(e)}^2 \leq C
    {\overline{\Lambda}}_m^2 \sum_{e \in \MEh} h_e^{-1} \| \jump{v_h}
    \|_{L^2(e)}^2, \quad \forall v_h \in U_h^0.
  \end{displaymath}
  For $v_h \in U_h^0$, we let
  $v_{K, \max} := \max_{\bm{x} \in I(K)} v_h(\bm{x})$ and $v_{K, \min}
  := \min_{\bm{x} \in I(K)} v_h(\bm{x})$ for $\forall K \in \MTh$.
  We apply the inverse estimate \eqref{eq_inverse} and Lemma
  \ref{le_recons_stability} to see that
  for $\forall K \in \MTh$, there holds
  \begin{align}
    \| \nabla \mc{R}^m v_h \|_{L^2(K)}^2 &=  \| \nabla \mc{R}^m_K v_h
    \|_{L^2(K)}^2  = \| \nabla (\mc{R}^m_K v_h - v_{K, \min})
    \|_{L^2(K)}^2 \leq C h_K^{d - 2} \| \mc{R}_K^m ( v_h - v_{K,
    \min}) \|_{L^{\infty}(K)}^2  \nonumber \\
    & \leq  Ch_K^{d-2} \overline{\Lambda}_m^2 \max_{\bm{x} \in I(K)} |
    v_h(\bm{x}) - v_{K, \min} |^2 \leq Ch_K^{d-2}
    {\overline{\Lambda}}_m^2 (v_{K, \max} - v_{K, \min})^2. 
    \label{eq_nablavhupper}
  \end{align}
  \justsubstitute{Let $e \in \MEhI$ be shared by $K_0$ and $K_1$, we
  derive that}{For any interior face $e \in \MEhI$, we also let $e$ be
  shared by $K_0$ and $K_1$, and we derive that}
  \begin{align*}
    & h_e^{-1}\| \jump{\mc{R}^m v_h} \|_{L^2(e)}^2 = h_e^{-1} \|
    \mc{R}_{K_0}^m v_h - \mc{R}_{K_1}^m v_h \|_{L^2(e)}^2 \\
    &= h_e^{-1} \| \mc{R}_{K_0}^m v_h - (\mc{R}_{K_0}^m
    v_h)(\bm{x}_{K_0}) - \mc{R}_{K_1}^m v_h +  (\mc{R}_{K_1}^m
    v_h)(\bm{x}_{K_1}) + (\mc{R}_{K_0}^m
    v_h)(\bm{x}_{K_0}) -  (\mc{R}_{K_1}^m v_h)(\bm{x}_{K_1}) 
    \|_{L^2(e)}^2 \\
    & \leq Ch_e^{-1} (h_{K_0}^{d+1} \| \nabla \mc{R}_{K_0}^m
    v_h\|^2_{L^{\infty}(K_0)} +
    h_{K_1}^{d+1} \| \nabla \mc{R}_{K_1}^m v_h\|^2_{L^{\infty}(K_1)})
    + Ch_e^{d-2} (v_h(\bm{x}_{K_0}) - v_h(\bm{x}_{K_1}))^2 \\
    & \leq C ( \| \nabla \mc{R}_{K_0}^m
    v_h\|^2_{L^{2}(K_0)} + \| \nabla \mc{R}_{K_1}^m
    v_h\|^2_{L^{2}(K_1)} + h_e^{-1} \| \jump{v_h} \|_{L^2(e)}^2).
  \end{align*}
  For any boundary face $e \in \MEhB$, we let $e$ be a face of an
  element $K_e$, and there holds
  \begin{displaymath}
    \begin{aligned}
      h_e^{-1} \| \jump{\mc{R}^m v_h} \|_{L^2(e)}^2 = h_e^{-1} \|
      \mc{R}^m_{K_e} v_h \|_{L^2(e)}^2 \leq C ( \| \nabla \mc{R}^m_{K_e} 
      v_h \|_{L^2(K_e)}^2 + h_e^{-1} \| \jump{v_h} \|_{L^2(e)}^2).
    \end{aligned}
  \end{displaymath}
  Collecting above two estimates brings us that
  \begin{equation}
    \DGenorm{\mc{R}^m v_h}^2 \leq C  \overline{\Lambda}_m^2 \sum_{K
    \in \MTh} h_K^{d-2} (v_{K, \max} - v_{K, \min})^2 + C
    \DGenorm{v_h}^2.
    \label{eq_DGnrmRmvh}
  \end{equation}
  For $\forall K \in \MTh$, we let $\mc{E}_{S(K)} := \bigcup_{K' \in
  S(K)} \mc{E}_{K'}$, where $\mc{E}_{K'} := \{ e \in \MEh \ | \ e
  \subset \partial K'\}$, be the set of all faces corresponding to all
  elements in $S(K)$.  We then show that for $\forall K \in \MTh$,
  there holds
  \begin{equation}
    (v_{K, \max} - v_{K, \min})^2 \leq C\sum_{e \in \mc{E}_{S(K)}}
    h_e^{1-d} \| \jump{v_h} \|_{L^2(e)}^2.
    \label{eq_vmaxmin}
  \end{equation}
  Let $K', K'' \in S(K)$ such that $v_{K, \max} = v_h|_{K'}$ and
  $v_{K, \min} = v_h|_{K''}$. Clearly, there exists a sequence
  $\wt{K}_0, \wt{K}_1, \ldots, \wt{K}_M$ such that $\wt{K}_0 = K'$,
  $\wt{K}_M = K''$, $\wt{K}_j \in S(K)(0 \leq j \leq M)$, and
  $\wt{K}_j$ is adjacent to $\wt{K}_{j+1}$, and we let
  $\wt{e}_j = \partial
  \wt{K}_j \cap \partial \wt{K}_{j+1}$. We have that
  \begin{align*}
    (v_{K, \max} - v_{K, \min})^2 \leq C \sum_{j = 0}^{M - 1}
    (v_h|_{K_j} - v_h|_{K_{j+1}})^2  \leq C \sum_{j = 0}^{M - 1}
    h_{\wt{e}_j}^{1-d} \| \jump{v_h} \|_{L^2(\wt{e}_j)}^2 \leq C
    \sum_{e \in \mc{E}_{S(K)}} h_e^{1-d} \| \jump{v_h} \|_{L^2(e)}^2.
  \end{align*}
  Combining \eqref{eq_DGnrmRmvh} and \eqref{eq_vmaxmin} leads
  to the upper bound in \eqref{eq_DGenormCequi}. This completes the
  proof.
\end{proof}
The condition number of the preconditioned system $A_0^{-1} A_{m,
\theta}$ will be established on Lemma \ref{le_DGenormCequi}. 
We begin the analysis from the symmetric case, i.e. $\theta = -1$.
In the estimation of \substitute{the condition number}{the condition
number to the matrix $A_{m,
\theta}$}, we stated that any vector
$\bmr{v} \in \mb{R}^{n_e}$ corresponds to a finite element function
$v_h \in U_h^m$. For the analysis to the preconditioned system, 
we let any $\bmr{v} = \{v_j\}_{j = 1}^{n_e} \in \mb{R}^{n_e}$
correspond to a piecewise constant function $v_h \in U_h^0$ such that
$v_h(\bm{x}_K) = v_j$, where $K$ is the element indexed by $j$. 
Now, we have that 
\begin{displaymath}
  \bmr{v}^T A_0 \bmr{w} = a_h^0(v_h, w_h), \quad \bmr{v}^T A_{m,
  \theta} \bmr{w} = a_{h, \theta}(\mc{R}^m v_h, \mc{R}^m w_h), \quad
  \forall \bmr{v}, \bmr{w} \in \mb{R}^{n_e}.
\end{displaymath}
By Lemma \ref{le_DGenormCequi} and the boundedness and the coercivity
of $a_{h, \theta}(\cdot, \cdot)$, we find that 
\begin{equation}
  C_0 {\overline{\Lambda}}_m^{-2} \bmr{v}^T A_{m, \theta} \bmr{v} \leq
  \bmr{v}^T A_{0} \bmr{v} \leq C_1 \bmr{v}^T A_{m, \theta} \bmr{v},
  \quad \forall \bmr{v} \in \mb{R}^{n_e}.
  \label{eq_LmvTAv}
\end{equation}
From \cite[Lemma 2.1]{Xu1992iterative}, the above estimate yields the
desired estimate $\kappa(A_0^{-1} A_{m, \theta}) \leq C
\overline{\Lambda}_m^2$
for the symmetric case $\theta = -1$.

Next, we consider the preconditioned system for the nonsymmetric
case $\theta = 1$. In this case, we have that
\begin{equation}
  \bmr{v}^T A_{h, \theta}^S \bmr{w} = a_{h, \theta}^S(\mc{R}^m v_h,
  \mc{R}^m w_h), \quad \bmr{v}^T A_{h, \theta}^N \bmr{w} = a_{h,
  \theta}^N(\mc{R}^m v_h, \mc{R}^m w_h), \quad \forall \bmr{v},
  \bmr{w} \in \mb{R}^{n_e}.
  \label{eq_vTASw}
\end{equation}
The matrix $A_0^{-1} A_{m, \theta}$ can be split as $A_0^{-1} A_{m,
\theta} = A_0^{-1} A_{m, \theta}^S - A_0^{-1}A_{m, \theta}^N$. From
\eqref{eq_vTASw}, the estimate \eqref{eq_LmvTAv} also holds for the
symmetric part $A_{m, \theta}^S$, which gives that $\kappa(A_0^{-1}
A_{m, \theta}^S) \leq C \overline{\Lambda}_m^2$. For the antisymmetric
part, we deduce that 
\begin{equation}
  \begin{aligned}
  \bmr{v}^T A_{h, \theta}^N \bmr{w} & = a_{h, \theta}^N(\mc{R}^m v_h,
  \mc{R}^m w_h) \leq C \DGenorm{\mc{R}^m v_h} \DGenorm{\mc{R}^m w_h}
  \leq C \overline{\Lambda}_m^2 \DGenorm{v_h} \DGenorm{w_h} \\
  &\leq C \overline{\Lambda}_m^2 \sqrt{\bmr{v}^T A_0
  \bmr{v}}\sqrt{\bmr{w}^T A_0 \bmr{w}},
\end{aligned}
  \label{eq_vTANw}
\end{equation}
which implies that the spectral radius of the system $A_0^{-1} A_{h,
\theta}^N$ satifies that $\rho(A_0^{-1} A_{h, \theta}^N) \leq C
\overline{\Lambda}_m^2$. 

Ultimately, we summarize the estimation of the condition number for
the preconditioned system in the following theorem.
\begin{theorem}
  Let $\mu$ be defined as in Lemma \ref{le_ahbc}, there exists a
  constant $C$ such that
  \begin{equation}
    \kappa(A_0^{-1} A_{m, \theta}) \leq C \overline{\Lambda}_m^2,
    \quad \theta  = \pm 1.
    \label{eq_A0Amcond}
  \end{equation}
  \label{th_A0Amcond}
\end{theorem}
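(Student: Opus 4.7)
The plan is to show that the bilinear forms defining $A_0$ and $A_{m,\theta}$ are spectrally equivalent, up to a factor of $\Lambda_m^2$, under the identification of $\bmr{v} \in \mb{R}^{n_e}$ with a piecewise constant function $v_h \in U_h^0$, and then to invoke standard preconditioning theory to translate this equivalence into a condition number bound. Lemma \ref{le_DGenormCequi} is the crucial structural ingredient that allows $U_h^0$ to ``see'' the DG energy norm of its reconstructed image in $U_h^m$.

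For the symmetric case $\theta = -1$, I would first observe that $\bmr{v}^T A_0 \bmr{v} = \DGenorm{v_h}^2$ (from the definition of $a_h^0$) and $\bmr{v}^T A_{m,\theta} \bmr{v} = a_{h,\theta}(\mc{R}^m v_h, \mc{R}^m v_h)$. Combining Lemma \ref{le_ahbc} (boundedness and coercivity of $a_{h,\theta}$ under $\DGenorm{\cdot}$) with Lemma \ref{le_DGenormCequi} (equivalence of $\DGenorm{v_h}$ and $\DGenorm{\mc{R}^m v_h}$ up to $\Lambda_m$) yields the two-sided spectral equivalence $C_0 \Lambda_m^{-2}\,\bmr{v}^T A_{m,\theta}\bmr{v} \leq \bmr{v}^T A_0 \bmr{v} \leq C_1\,\bmr{v}^T A_{m,\theta}\bmr{v}$, and \cite[Lemma 2.1]{Xu1992iterative} then immediately produces the bound on $\kappa(A_0^{-1} A_{m,\theta})$.

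For the nonsymmetric case $\theta = 1$, I would split $A_{m,\theta} = A_{m,\theta}^S - A_{m,\theta}^N$. The symmetric part is handled exactly as in the previous paragraph, since $a_{h,\theta}^S$ is bounded and coercive under $\DGenorm{\cdot}$ by \eqref{eq_aSaNbc}, yielding $\kappa(A_0^{-1} A_{m,\theta}^S) \leq C \Lambda_m^2$. For the antisymmetric part, I would combine the continuity of $a_{h,\theta}^N$ with Lemma \ref{le_DGenormCequi} to obtain the bilinear bound $|\bmr{v}^T A_{m,\theta}^N \bmr{w}| \leq C \Lambda_m^2 \sqrt{\bmr{v}^T A_0 \bmr{v}}\sqrt{\bmr{w}^T A_0 \bmr{w}}$, which controls the spectral radius $\rho(A_0^{-1} A_{m,\theta}^N) \leq C \Lambda_m^2$.

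The hard part is merging these two pieces into a single condition number bound in the nonsymmetric case. The cleanest way is to pass to the $A_0$-inner product, in which $A_0^{-1} A_{m,\theta}^S$ becomes self-adjoint and positive definite while $A_0^{-1} A_{m,\theta}^N$ becomes skew-adjoint. The symmetric equivalence then delivers a positive lower bound on the Hermitian part of $A_0^{-1} A_{m,\theta}$, while a triangle inequality applied to both parts yields an upper bound on its $A_0$-operator norm. These together give the uniform estimate $\kappa(A_0^{-1} A_{m,\theta}) \leq C \Lambda_m^2$ independent of $h$.
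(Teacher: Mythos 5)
Your proposal follows essentially the same route as the paper: the symmetric case via the spectral equivalence $C_0\Lambda_m^{-2}\,\bmr{v}^TA_{m,\theta}\bmr{v} \le \bmr{v}^TA_0\bmr{v} \le C_1\,\bmr{v}^TA_{m,\theta}\bmr{v}$ obtained from Lemma \ref{le_ahbc} and Lemma \ref{le_DGenormCequi} together with \cite[Lemma 2.1]{Xu1992iterative}, and the nonsymmetric case via the splitting $A_{m,\theta}=A_{m,\theta}^S-A_{m,\theta}^N$ with the same equivalence for the symmetric part and the bound $\rho(A_0^{-1}A_{m,\theta}^N)\le C\Lambda_m^2$ for the antisymmetric part. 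Your final paragraph merely spells out, via the $A_0$-inner product, the combination step that the paper leaves implicit; the argument is correct.
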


\substitute{}{
\begin{remark}
  The estimate of the condition number is based on the
  boundedness and the coercivity of the bilinear form $a_{h,
  \theta}(\cdot, \cdot)$, i.e. 
  \begin{displaymath}
    \wh{C} \DGenorm{v_h}^2 \leq a_{h, \theta}(v_h, v_h) \leq \wt{C}
    \DGenorm{v_h}^2, \quad \forall  v_h \in U_h^m. 
  \end{displaymath}
  For the symmetric scheme, by \eqref{eq_vTAv} and \eqref{eq_LmvTAv}
  we obtain that the constants in \eqref{eq_kappaA} and
  \eqref{eq_A0Amcond} depend on $\wt{C} / \wh{C}$, i.e.
  $\kappa(A_{m, \theta}) \leq C_0 (\wt{C} / \wh{C})
  \overline{\Lambda}_m^2 h^{-2}$ and $\kappa(A_0^{-1} A_{m, \theta})
  \leq C_1 (\wt{C} / \wh{C}) \overline{\Lambda}_m^2$, where $C_0$ and
  $C_1$ are independent of $\mu$ and $A$. We refer to
  \cite{Epshteyn2007estimation} for the detailed analysis of the
  relationship between $\wh{C}$, $\wt{C}$ and the penalty parameter
  $\mu$ and the coefficient $A$.
  \label{re_constant}
\end{remark}
}

\substitute{}{
By Theorem \ref{th_A0Amcond}, 
the resulting linear system $A_{m, \theta} \bmr{x} = \bmr{b}$ can be
solved by using an iterative approach from the Krylov-subspace family
with the preconditioner $A_0^{-1}$. In numerical tests, the
preconditioned CG/GMRES methods are applied to the
symmetric/nonsymmetric linear systems, respectively.
For the nonsymmetric system, we present more details about the
convergence analysis to the GMRES solver. 
We define an inner product $(\bmr{v}, \bmr{w})_{A_0} := \bmr{v}^T A_0
\bmr{w}$ for $\forall \bmr{v}, \bmr{w} \in \mb{R}^{n_e}$ and its
induced norm by $\| \cdot \|_{A_0}$.
By \cite[Theorem 1]{Xu1992GMRES}, the convergence estimate of the
preconditioned GMRES method follows from the estimates
\begin{equation}
  \| A_0^{-1} A_{m, 1} \bmr{v} \|_{A_0} \leq C_0
  \overline{\Lambda}_m^2 \| \bmr{v} \|_{A_0}, \quad \forall \bmr{v}
  \in \mb{R}^{n_e},
  \label{eq_A0Am1bounded}
\end{equation}
and
\begin{equation}
  (A_0^{-1} A_{m, 1} \bmr{v}, \bmr{v})_{A_0} \geq C_1 (\bmr{v},
  \bmr{v})_{A_0}, \quad \forall \bmr{v} \in \mb{R}^{n_e}.
  \label{eq_A0Am1coer}
\end{equation}
From \eqref{eq_vTASw} and \eqref{eq_LmvTAv}, we derive that
\begin{align*}
  (A_0^{-1} A_{m, 1} \bmr{v}, \bmr{v})_{A_0} = \bmr{v}^T A_{m, 1}
  \bmr{v} = \bmr{v}^T A_{m, 1}^S \bmr{v} \geq C \bmr{v}^T
  A_0 \bmr{v} = C (\bmr{v}, \bmr{v})_{A_0}, \quad \forall \bmr{v} \in
  \mb{R}^{n_e},
\end{align*}
which gives the second estimate \eqref{eq_A0Am1coer}.
By letting $\bmr{x} = A_0^{1/2} \bmr{v}$ and $\bmr{y} = A_0^{1/2}
\bmr{w}$ in \eqref{eq_vTANw}, we find that
\begin{displaymath}
  \bmr{x}^T A_0^{-1/2} A_{m, 1}^N A_0^{-1/2} \bmr{y}^T  \leq C
  \overline{\Lambda}_m^2 \sqrt{ \bmr{x}^T \bmr{x}} \sqrt{ \bmr{y}^T \bmr{y}} ,
  \quad \forall \bmr{x}, \bmr{y} \in \mb{R}^{n_e},
\end{displaymath}
which leads to $\sigma_{\max}(A_0^{-1/2} A_{m, 1}^N A_0^{-1/2}) \leq
C \overline{\Lambda}_m^2$. From the triangle inequality, 
we have that 
\begin{align*}
  \| A_0^{-1} A_{m, 1} \bmr{v} \|_{A_0} \leq  \| A_0^{-1} A_{m, 1}^S
  \bmr{v} \|_{A_0} +   \| A_0^{-1} A_{m, 1}^N
  \bmr{v} \|_{A_0}, \quad \forall \bmr{v} \in \mb{R}^{n_e}.
\end{align*}
By \eqref{eq_LmvTAv} and \cite[Lemma 2.1]{Xu1992iterative}, there holds
\begin{align*}
  \| A_0^{-1} A_{m, 1}^S \bmr{v} \|_{A_0}^2 = \bmr{v}^T A_{m, 1}^S
  A_0^{-1} A_{m, 1}^S \bmr{v} \leq C \overline{\Lambda}_m^4 \bmr{v}^T A_0
  \bmr{v} = C \overline{\Lambda}_m^4 \| \bmr{v} \|_{A_0}^2, \quad
  \forall \bmr{v} \in \mb{R}^{n_e}.
\end{align*}
For the second term, we derive that 
\begin{align*}
  \| A_0^{-1} A_{m, 1}^N \bmr{v} \|_{A_0}^2 & = \bmr{v}^T (A_{m,
  1}^N)^T A_0^{-1} A_{m, 1}^N \bmr{v} \\
  & = \bmr{w}^T (A_0^{-1/2} A_{m, 1}^N A_0^{-1/2} )^T A_0^{-1/2} A_{m,
  1}^N A_0^{-1/2} \bmr{w}  \text{ (let $\bmr{w} = A_0^{1/2} \bmr{v}$)}
    \\
  & \leq (\sigma_{\max}(  A_0^{-1/2} A_{m, 1}^N A_0^{-1/2} ))^2
  \bmr{w}^T \bmr{w} \leq C \overline{\Lambda}_m^4 \bmr{v}^T A_0
  \bmr{v} = C \overline{\Lambda}_m^4 \| \bmr{v} \|_{A_0}^2, \quad
  \forall \bmr{v} \in \mb{R}^{n_e}.
\end{align*}
The estimate \eqref{eq_A0Am1bounded} is reached.
From \eqref{eq_A0Am1bounded} and \eqref{eq_A0Am1coer}, the
preconditioned GMRES method for solving the system $A_{m, 1} \bmr{x} =
\bmr{b}$ has the following convergence rate \cite[Theorem
2]{Xu1992GMRES}, 
\begin{displaymath}
  \| \bmr{r}_k \|_{A_0} \leq ( 1 - (C_1/(C_0
  \overline{\Lambda}_m^2))^2)^{k/2} \| \bmr{r}_0 \|_{A_0}.
\end{displaymath}
where $\bmr{r}_k = \bmr{b} - A_{m, 1} \bmr{x}_k$ is the residual at
the iteration step $k$.
}

\substitute{From Theorem \ref{th_A0Amcond}, the linear system
$A_{m, \theta}$ can be solved using an iterative approach from the
Krylov-subspace family, such as BiCGSTAB, GMRES, with the
preconditioner $A_0^{-1}$.
In the
iteration, we are required to solve a linear system of the form $A_0
\bmr{x} = \bmr{b}$.}{
In the Krylov iteration step, we are required to compute the
matrix-vector product $A_0^{-1} \bmr{y}$. Hence, a fast and accurate
method
to solve the linear system of the form $A_0 \bmr{z} = \bmr{y}$ is
desired in our scheme. 
}
On the triangular meshes, we outline a \substitute{geometrical
multigrid method}{$\mc{V}$-cycle multigrid method} for this system.
Let $\mc{T}_{1}, \mc{T}_2, \ldots, \mc{T}_r$ be a series of
successively refined meshes, i.e. $\mc{T}_{l+1}$ is created by
subdividing all of triangular (tetrahedral) elements in $\mc{T}_l$.
Let $U_l^0$ be the piecewise constant \justsubstitute{function}{space} on
the partition $\mc{T}_l$, and we have that 
\begin{displaymath}
  U_1^0 \subset U_2^0 \subset U_3^0 \ldots \subset U_r^0.
\end{displaymath}
We let $I_{k}^{k+1}: U_{k}^0 \rightarrow U_{k+1}^0$ be the canonical
prolongation operator, i.e. $I_{k}^{k+1} v_h = v_h(\forall v_h \in
U_{k}^0)$, and we let $I_{k+1}^k: U_{k+1}^0 \rightarrow U_k^0$ be the
transpose of $I_k^{k+1}$. Let $A_{k}^0$ be the matrix for the bilinear
form $a_h^0(\cdot, \cdot)$ over the spaces $U_k^0 \times U_k^0$. 
Then, the standard recursive structure of the multigrid algorithm is
depicted in Algorithm ~\ref{alg_multigrid}.
Consequently, the preconditioner in preconditioned CG/GMRES methods to
the linear system  $A_{m, \theta} \bmr{x} = \bmr{b}$ can be chosen as
Algorithm \ref{alg_multigrid}. 
\substitute{ Consequently, the linear system $A_{m, \theta} \bmr{x} =
\bmr{b}$ is solved by an Krylov iterative method, and the
preconditioner is selected as the multigrid method of $A_0^{-1}$ in
Algorithm ~\ref{alg_multigrid}. For the polygonal mesh, we solve the
system $A_0 \bmr{x} = \bmr{b}$ by the algebraic multigrid method in
the package HYPRE \cite{Falgout2006design}.
}{We also apply the algebraic multigrid method (BoomerAMG in
the package HYPRE \cite{Falgout2006design}) and the direct LU method
to approximate $A_0^{-1}$ to precondition the linear system. 
The three different preconditioners give a close convergence step for
CG/GMRES solvers in numerical results. 
The theoretical analysis for multigrid methods in approximating
$A_0^{-1}$ is considered as a future work for us.
}

\begin{algorithm}[t]
  \caption{\substitute{Multigrid Solver}{$\mc{V}$-cycle Multigrid
  Solver}, MGSolver($\bmr{x}_k, \bmr{b}_k, k$)}
  \label{alg_multigrid}
  \KwIn{ the initial guess $\bmr{x}_k$, the right hand side
  $\bmr{b}_k$, the level $k$;}
  \KwOut{the solution $\bmr{x}_k$;}
  \If{$k=1$}{
  $\bmr{x}_1 = (A_1^{0})^{-1} \bmr{b}_1$, $\bmr{x}_1$ is the solution
  obtained from a direct method; \\[1mm]

  return $\bmr{x}_1$;
  }

  \If{$k > 1$}{
  presmoothing step: Gauss-Seidel sweep on $A_k^{0} \bmr{x}_k =
  \bmr{b}_k$;

  error correction step: let $\bmr{y} = I_k^{k-1}(\bmr{b}_k - A_k^0
  \bmr{x}_k)$, and \substitute{}{let $\bmr{z}_0 = \bm{0}$, and let 
  \begin{displaymath}
    \bmr{z}_1 = \text{MGSolver}(\bmr{z}_{0}, \bmr{y}, k - 1);
  \end{displaymath}}
  set $\bmr{x}_k = \bmr{x}_k + I_{k-1}^k\bmr{z}_1$.

  postsmoothing step: Gauss-Seidel sweep on $A_k^{0} \bmr{x}_k =
  \bmr{b}_k$;

  return $\bmr{x}_k$;
  }
\end{algorithm}

\section{Numerical Results}
\label{sec_numericalresults}
In this section, a series of numerical experiments are conducted to
demonstrate the performance of the proposed method and the efficiency
of the preconditioning method.  The meshes we used in the following
examples are shown in Fig.~\ref{fig_mesh}.  The threshold $\# S$ used
in 
all tests are listed in Tab.~\ref{tab_S2} and Tab.~\ref{tab_S3}.
\substitute{}{For the symmetric scheme $\theta = -1$, we take the
penalty parameter $\mu = 3m^2 + 5$, and for the nonsymmetric scheme
$\theta = 1$, we fix the penalty parameter $\mu = 1$.  }

\begin{table}
  \centering
  \renewcommand\arraystretch{1.35}
  \scalebox{0.9}{
  \begin{tabular}{p{1.0cm}|p{3.6cm}|p{1cm}|p{1cm}|p{1cm}|p{1cm}}
    \hline\hline
    \multicolumn{2}{c|}{$m$} & 1 & 2 & 3 & 4 \\
    \hline
    \multirow{2}{*}{$\# S$} & the triangular mesh & 
    5 & 9 & 15 & 21 \\
    \cline{2-6}
    & the polygonal mesh & 6 & 10 & 16 & 23  \\
    \hline
    \multicolumn{2}{c|}{$\dim(\mb{P}_m)$} &  3 & 6 & 10 & 15 \\
    \hline\hline
  \end{tabular}}
  \caption{The threshold $\# S$ in two dimensions.}
  \label{tab_S2}
\end{table}

\begin{table}
  \centering
  \renewcommand\arraystretch{1.3}
  \scalebox{0.9}{
  \begin{tabular}{p{1.0cm}|p{3.6cm}|p{1cm}|p{1cm}|p{1cm}}
    \hline\hline
    \multicolumn{2}{c|}{$m$} & 1 & 2 & 3 \\
    \hline
    $\# S$  & the tetrahedral mesh & 9 & 19 & 38 \\
    \hline
    \multicolumn{2}{c|}{$\dim(\mb{P}_m)$} & 4 & 10 & 20 \\
    \hline\hline
  \end{tabular}}
  \caption{The threshold $\# S$ in three dimensions.}
  \label{tab_S3}
\end{table}

\begin{figure}[htbp]
  \centering
  \includegraphics[width=0.25\textwidth]{./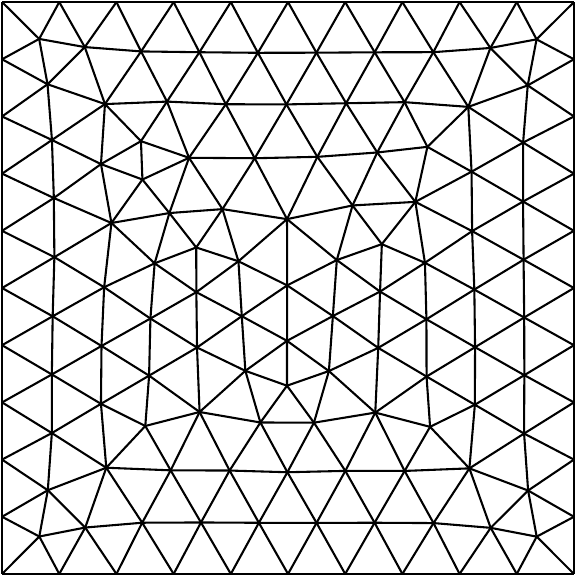}
  \hspace{30pt}
  \begin{minipage}[t]{0.25\textwidth}
    \centering
    \begin{tikzpicture}[scale = 2]
      \input{./figure/polymesh.tex}
    \end{tikzpicture}
  \end{minipage}
  \hspace{30pt}
  \includegraphics[width=0.25\textwidth]{./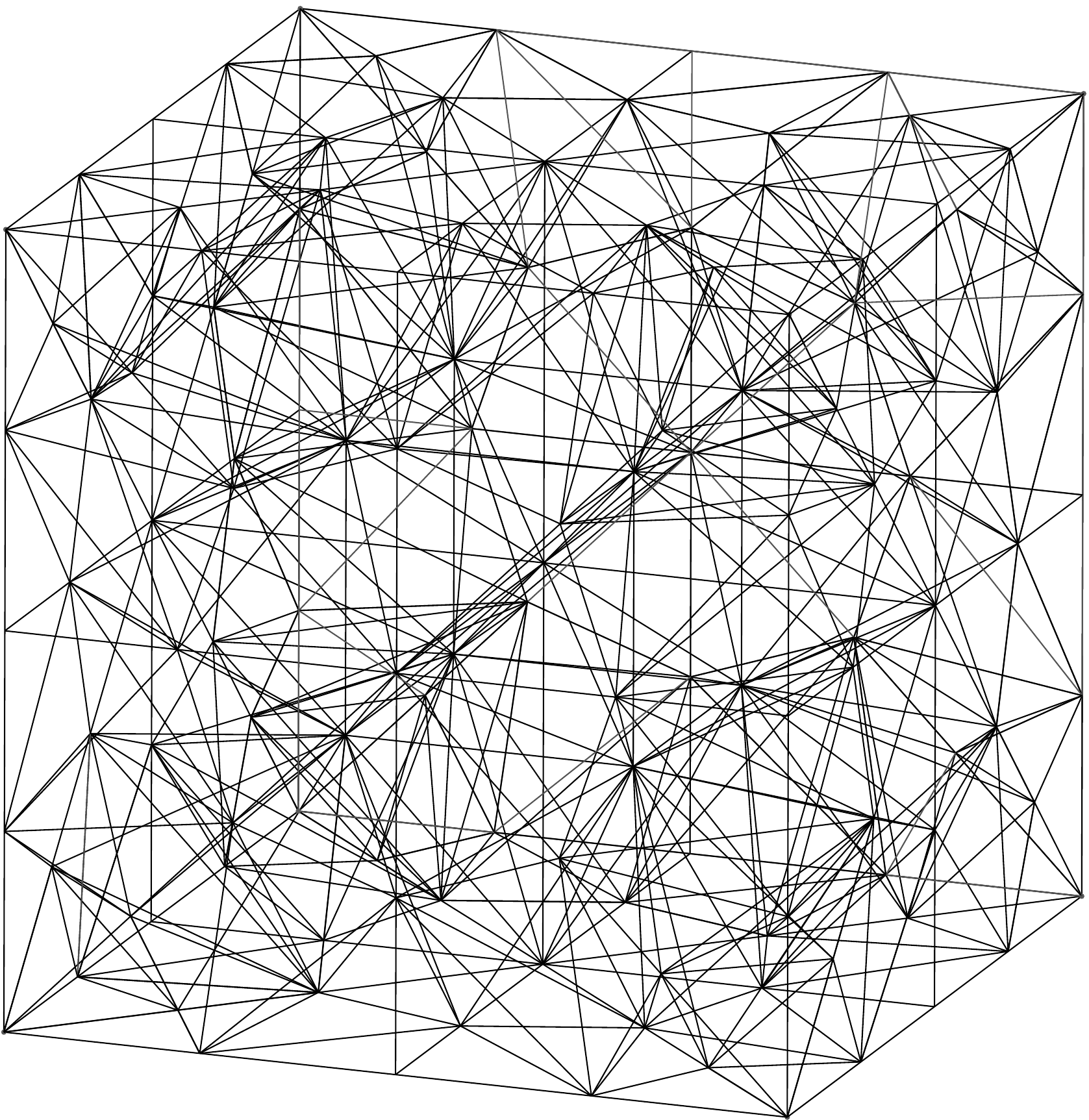}
  \caption{The triangular mesh (left), polygonal mesh (middle) and tetrahedral mesh (right).}
  \label{fig_mesh}
\end{figure}

\subsection{Study on convergence rate}
\label{subsec_convergence}
We first demonstrate the convergence behavior to examine the
theoretical predictions and show the efficiency of the proposed
method. 

\paragraph{\textbf{Example 1.}}
In the first example, we consider an elliptic problem in the squared
domain $\Omega = (-1, 1)^2$. The coefficient matrix $A$ is taken as
the identical matrix $I$, and the exact solution $u(x, y)$ is given by
\begin{displaymath}
  u(x,y) = \sin(2\pi(x+y)) \sin(2\pi y) + x^2 y.
\end{displaymath}
We solve this problem with the reconstructed space $U_h^m$, and
the numerical results are displayed in Tab.~\ref{tab_ex1err}. 
Clearly, the
numerical errors under both the energy norm and the $L^2$ norm
approach zero at the optimal convergence rates, which confirm the
theoretical results. 

\substitute{}{According to \cite{Hughes2000comparison}, the number of
degrees of freedom to the specific discrete system can serve as a
proper indicator for the scheme's efficiency. This indicator mainly
reveals the efficiency of the finite element approximation to Sobolev
spaces.} For the proposed scheme, the number of degrees
of freedom is always the number of elements in the partition.  
Let $V_h^m := \{ v_h \in L^2(\Omega) \ | \ v_h|_K \in
\mb{P}_m(K), \ \forall K \in \MTh \}$, which is usually the
approximation space in the standard DG method.
We also solve this problem by the space $V_h^m$.  The $L^2$ errors for
both methods against the number of degrees of freedom are plotted in
Fig.~\ref{fig_ex12_L2err}. 
It can be observed that fewer degrees of freedom are needed by the
reconstructed space to achieve a comparable $L^2$ error.
Tab.~\ref{tab_ex1_dofs} lists the ratio of the number of degrees of
freedom by two methods when the same $L^2$ errors are achieved. The
saving of degrees of freedom is more remarkable for $U_h^m$ when using
the high-order approximation.

\begin{table}[htbp]
  \centering
  \renewcommand{\arraystretch}{1.25}
  \scalebox{0.9}{
  \begin{tabular}{p{0.5cm}|p{2cm}|p{1.5cm}|p{1.5cm}|p{1.5cm}|p{1.5cm}|p{1.5cm}|p{1.5cm}}
    \hline\hline
    $m$   & $h$ & $1/10$ & $1/20$ & $1/40$ & $1/80$ & $1/160$ & order \\ \hline
    \multirow{2}{*}{1} & $\DGtenorm{u-u_h}$ 
    & 3.98e-0 & 1.93e-0 & 9.30e-1 & 4.55e-1 & 2.25e-1 & 1.03 \\  \cline{2-8}
    & $\|u-u_h\|_{L^2(\Omega)}$ 
    & 1.53e-1 & 3.50e-2 & 9.44e-3 & 2.42e-3 & 6.09e-4 & 2.00 \\ \hline
    \multirow{2}{*}{2} & $\DGtenorm{u-u_h}$ 
    & 1.78e-0 & 4.34e-1 & 1.02e-1 & 2.44e-2 & 5.95e-3 & 2.05 \\ \cline{2-8} 
    & $\|u-u_h\|_{L^2(\Omega)}$ 
    & 4.69e-2 & 5.41e-3 & 5.93e-4 & 6.81e-5 & 8.17e-6 & 3.06 \\ \hline
    \multirow{2}{*}{3} & $\DGtenorm{u-u_h}$
    & 7.36e-1 & 8.70e-2 & 1.03e-2 & 1.18e-3 & 1.38e-4 & 3.09 \\ \cline{2-8}
    &  $\|u-u_h\|_{L^2(\Omega)}$
    & 1.43e-2 & 7.04e-4 & 3.96e-5 & 2.40e-6 & 1.49e-7 & 4.01 \\ \hline
    \multirow{2}{*}{4} & $\DGtenorm{u-u_h}$
    & 3.34e-1 & 2.02e-2 & 1.32e-3 & 8.03e-5 & 4.80e-6 & 4.02 \\ \cline{2-8}
    &  $\|u-u_h\|_{L^2(\Omega)}$
    & 8.65e-3 & 2.38e-4 & 6.85e-6 & 1.96e-7 & 5.75e-9 & 5.08 \\ 
    \hline\hline
  \end{tabular}}
  \caption{The convergence histories for Example 1 with the
  reconstructed space $U_h^m$.}
  \label{tab_ex1err}
\end{table}

\begin{table}
  \centering
  \renewcommand{\arraystretch}{1.3}
  \scalebox{0.9}{
  \begin{tabular}{p{2cm}|p{1cm}|p{1cm}|p{1cm}|p{1cm}}
    \hline\hline
    $m$ & 1 & 2 & 3 & 4 \\ 
    \hline
    RDA/DGM & 72\% & 53\% & 42\% & 34\% \\ 
    \hline\hline
  \end{tabular}}
  \caption{The ratio of the number of degrees of freedom
  used in $U_h^m$ and $V_h^m$ when achieving a comparable $L^2$ error
  in Example 1.}
  \label{tab_ex1_dofs}
\end{table}

\paragraph{\textbf{Example 2.}}
Next, we consider the elliptic problem in three dimensions defined on
the cubic domain $\Omega = (0, 1)^3$. We choose the smooth function
\begin{displaymath}
  u(x,y, z) = \sin(2\pi(x + y + z)),
\end{displaymath}
as the exact solution. The coefficient matrix $A$ is taken as the
identical matrix $I$.
The convergence histories with the reconstructed space $U_h^m$ are
gathered in Tab.~\ref{tab_ex2err}, which clearly illustrate the
predictions in Theorem \ref{th_ellerror}.
Then, we solve this problem by the space $V_h^m$, and the numerical
results are reported in Fig.~\ref{fig_ex12_L2err} and
Tab.~\ref{tab_ex2_dofs}. 
One can observe that the reconstructed space $U_h^m$ still has a
better performance on the efficiency of \substitute{approximation
efficiency}{the finite element
approximation}, and the advantage
becomes prominent with the increasing of $m$ in three
dimensions.

\begin{table}[htbp]
  \centering
  \renewcommand{\arraystretch}{1.3}
  \scalebox{0.9}{
  \begin{tabular}{p{0.5cm}|p{2cm}|p{1.5cm}|p{1.5cm}|p{1.5cm}|p{1.5cm}|p{1.5cm}}
    \hline\hline
    $m$ &  $h$ & $1/4$ & $1/8$ & $1/16$ & $1/32$ & order \\ \hline
    \multirow{2}{*}{1} & $\DGtenorm{u-u_h}$
    & 1.23e-0 & 5.95e-1 & 2.89e-1 & 1.43e-1 & 1.03 \\ \cline{2-7}
    &  $\|u-u_h\|_{L^2(\Omega)}$
    & 5.46e-2 & 1.41e-2 & 3.70e-3 & 1.00e-3 & 1.92 \\ \hline
    \multirow{2}{*}{2} & $\DGtenorm{u-u_h}$  
    & 3.93e-1 & 9.52e-2 & 2.32e-2 & 5.68e-3 & 2.03 \\ \cline{2-7}
    &  $\|u-u_h\|_{L^2(\Omega)}$ 
    & 1.51e-2 & 1.99e-3 & 2.43e-4 & 3.09e-5 & 2.98 \\ \hline
    \multirow{2}{*}{3} & $\DGtenorm{u-u_h}$ 
    & 1.81e-1 & 1.74e-2 & 1.93e-2 & 2.30e-4 & 3.20 \\ \cline{2-7}
    &  $\|u-u_h\|_{L^2(\Omega)}$
    & 7.78e-3 & 3.62e-4 & 2.04e-5 & 1.25e-6 & 4.20 \\ 
    \hline\hline
  \end{tabular}}
  \caption{The convergence histories for Example 2 with the reconstructed space $U_h^m$.}
  \label{tab_ex2err}
\end{table}

\begin{figure}[htbp]
  \centering
  \includegraphics[width=0.45\textwidth]{./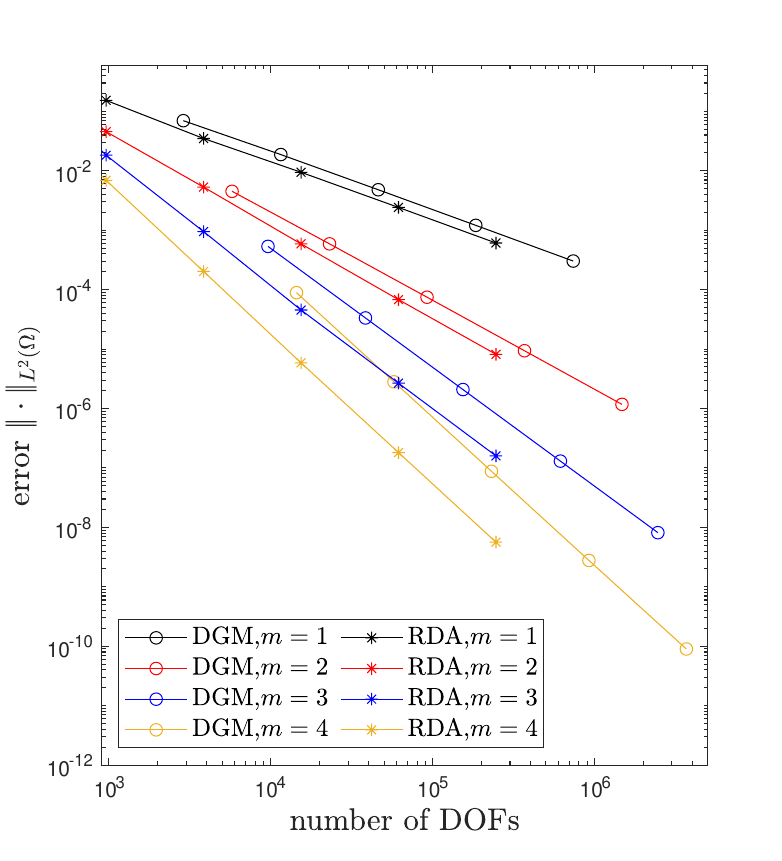}
  \includegraphics[width=0.45\textwidth]{./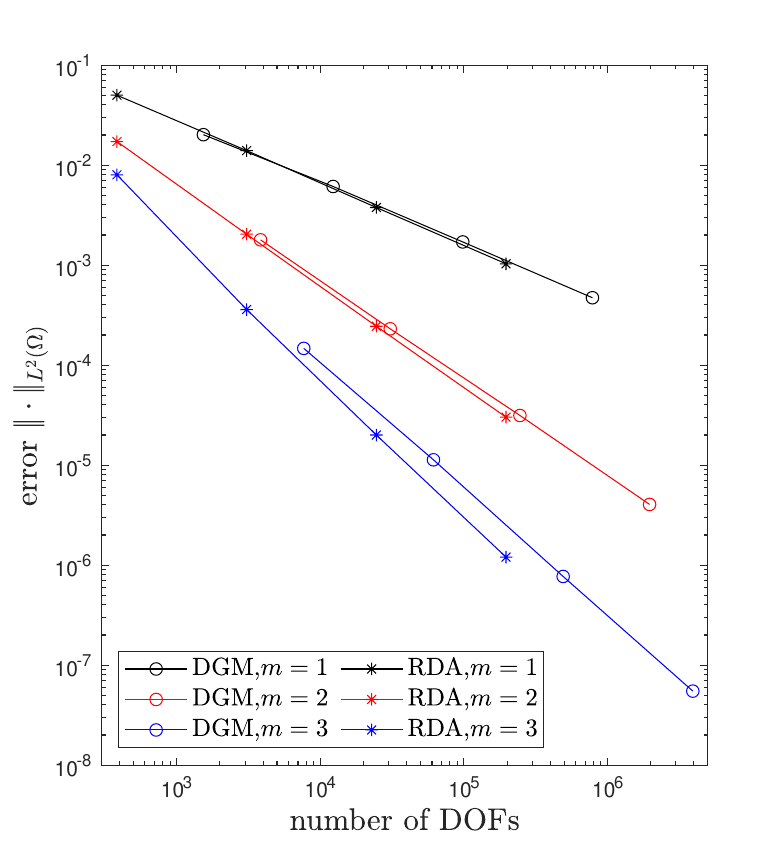}
  \caption{The comparison of the $L^2$ error for the spaces $U_h^m$ and $V_h^m$ in Example 1 (left) / Example 2
  (right).}
  \label{fig_ex12_L2err}
\end{figure}

\begin{table}
  \centering
  \renewcommand{\arraystretch}{1.3}
  \scalebox{0.9}{
  \begin{tabular}{p{2cm}|p{1cm}|p{1cm}|p{1cm}}
    \hline\hline
    $m$ & 1 & 2 & 3 \\
    \hline
    RDA/DGM & 57\% & 46\% & 34\% \\
    \hline\hline
  \end{tabular}}
  \caption{The ratio of the number of degrees of freedom used in
  $U_h^m$ and $V_h^m$ when achieving a comparable $L^2$ error in
  Example 2.}
  \label{tab_ex2_dofs}
\end{table}

\subsection{Study on preconditioned system}
\label{subsec_precondition} 
In this subsection, we illustrate the efficiency on solving the
preconditioned system $A_0^{-1} A_{m, \theta} \bmr{x} = \bmr{b}$ in
two and three dimensions.  \substitute{For both symmetric and
nonsymmetric schemes, the linear system is solved by the
preconditioned GMRES iterative method, and the preconditioner is
chosen as the multigrid method for $A_0^{-1}$ given in Algorithm
\ref{alg_multigrid}. The number of the presmoothing steps, the error
correction steps and the postsmoothing steps in Algorithm
\ref{alg_multigrid} are taken to be $10, 1, 10$.}{ For the symmetric
and nonsymmetric schemes, the linear systems are solved by the
preconditioned CG/GMRES methods, respectively. 
In Algorithm \ref{alg_multigrid}, the number of the
presmoothing and postsmoothing steps are fixed as $4$. We also use the
BoomerAMG method and the direct LU method to approximate
$A_0^{-1}$ to precondition the linear systems.} 
The iteration stops when the relative error 
\begin{displaymath}
  \frac{\| \bmr{b} - A_{m,\theta} \bmr{x}_k \|_{l^2}}{\| \bmr{b} 
  \|_{l^2}}.
\end{displaymath}
at the stage $k$ is smaller than the tolerance $\varepsilon = 10^{-8}$.

\paragraph{\textbf{Example 3.}}
In this example, we examine the condition numbers of the preconditioned
system $A_0^{-1} A_{m, \theta}$ for both $\theta = \pm 1$.  We
assemble the linear system on the triangular meshes with $h = 1/10,
1/20, 1/40$.  The condition numbers for different $m$ are
gathered in Tab.~\ref{tab_cond2d}.  It can be observed that the condition
numbers for both symmetric/nonsymmetric systems are nearly constants
as the mesh size $h$ tends to zero.  This numerical observation fairly
matches the estimate in Theorem \ref{th_A0Amcond}.

\begin{table}[htbp]
  \centering
  \renewcommand{\arraystretch}{1.2}
  \begin{tabular}{p{1.3cm}|p{1.6cm}|p{1.5cm}|p{1.5cm} || p{1.5cm}|
    p{1.5cm}|p{1.5cm}}
    \hline\hline
    \multirow{2}{*}{$h$} &  \multicolumn{3}{c||}{$\kappa(A_0^{-1} A_{m, -1})$} 
    &   \multicolumn{3}{c}{$\kappa(A_0^{-1} A_{m, 1})$} \\
    \cline{2-7}
    & $m=1$ & $m=2$ & $m=3$ & $m=1$ & $m=2$ & $m=3$ \\
    \hline 
    $1/10$ & 10.73 & 14.81 & 29.28 & 11.48 & 21.61 & 40.90 \\ 
    \hline
    $1/20$ & 11.14 & 17.62 & 27.99 & 12.23 & 23.28 & 39.29 \\ 
    \hline
    $1/40$ & 12.78 & 18.81 & 29.73 & 13.70 & 24.89 & 41.61 \\ 
    \hline\hline
  \end{tabular}
  \caption{The condition numbers of the preconditioned systems in two dimensions.}
  \label{tab_cond2d}
\end{table}

\paragraph{\textbf{Example 4.}}  
In this example, we solve an elliptic problem on the domain $\Omega =
(-1, 1)^2$. We adopt a family of triangular meshes with the mesh size
$h = 1/10, 1/20, 1/40, 1/80, 1/160$. 
The exact solution is chosen to be the same as in Example 1.
\substitute{}{Tab.~\ref{tab_ex4_steps} lists the 
convergence steps for CG/GMRES solvers. 
Besides preconditioning with $A_0^{-1}$, we try to use the
BoomerAMG algorithm to the matrix $A_{m, \theta}$ as the
preconditioner, and we also directly apply the standard CG/GMRES
methods in solving linear systems.
It can be observed that the methods preconditioning with $A_0^{-1}$
have a much faster convergence speed than other methods.}
For all accuracy $1 \leq m \leq 4$, the convergence steps are
numerically detected to be independent of $h$, which
illustrates the results in Theorem \ref{th_A0Amcond}. 
Fig.~\ref{fig_ex4_relative_error} displays the convergence histories
of CG/GMRES solvers. 
The relative errors in iterations of the system $A_0^{-1} A_{m,
\theta}$ decrease sharply, compared
to the standard CG/GMRES methods. 
Tab.~\ref{tab_ex4_times} gives the CPU times in solving the linear
system by different methods. 
It is evident that the CG/GMRES methods, when employing the Algorithm
\ref{alg_multigrid} for $A_0^{-1}$ as a preconditioner, is faster than
other methods.

As we stated in Remark \ref{re_Lambda}, the element patch will bring
more computational cost in assembling the stiffness matrix and
increase the width of the banded structure.
Tab.~\ref{tab_ex4_alltime} lists the CPU times in different steps for
both our symmetric scheme and the standard symmetric DG scheme.
It can be observed that for our method, solving local least squares
problems per element is very cheap, and as expected, our method
requires more CPU time in the step of assembling the stiffness matrix. 
For our method, the most time-consuming step is assembling the
stiffness matrix, and solving the final linear system is much faster
than this step. Generally, the step of assembling the stiffness matrix
can be easily accelerated by a parallel implementation typically
exploiting the multithreading technique.
For the DG method solving this example, we also try to apply the
preconditioned CG method using BoomerAMG algorithm as the
preconditioner. 
\revise{ The CPU time is essentially dominated by solving the linear
system, which is different from the reconstructed method. }{
We observe that the step of solving the linear system is more
time-consuming compared with assembling the stiffness matrix, which
appears to be different from the reconstructed method.
}

\begin{table}[htbp]
  \centering
  \renewcommand{\arraystretch}{1.15}
  \scalebox{0.80}{
  \revise{}{
  \begin{tabular}{p{0.25cm}|p{0.5cm}|p{2.5cm}|p{0.85cm}|p{0.85cm}|p{0.85cm}|p{0.85cm}
    |p{0.85cm}||p{0.85cm}|p{0.85cm}|p{0.85cm}|p{0.85cm}|p{0.85cm}}
    \hline\hline
    \multirow{2}{*}{$m$} &
    \multicolumn{2}{c|}{\diagbox[width=3.85cm]{Preconditioner}{$1/h$}}
    & 10 & 20 & 40 & 80 & 160 & 10 & 20 & 40 & 80 & 160 \\
    \cline{2-13}
    & \multicolumn{2}{c|}{$\theta$} & \multicolumn{5}{c||}{-1:
    symmetric}  & \multicolumn{5}{c}{1: nonsymmetric} \\
    \hline
    \multirow{5}{*}{1} & \multirow{3}{*}{$A_0^{-1}$} & GMG & 
    16 & 17 & 18 & 18 & 18 & 
    23 & 25 & 25 & 26 & 26 \\
    \cline{3-13}
    & & BoomerAMG & 
    16 & 17 & 18 & 19 & 20 &
    23 & 24 & 26 & 27 & 28 \\ 
    \cline{3-13}        
    & & DirectSolver & 
    16 & 17 & 18 & 18 & 18 &
    23 & 24 & 25 & 25 & 26 \\ \cline{2-13}        
    & \multicolumn{2}{c|}{BoomerAMG} &
    15 & 16 & 20 & 26 & 35 & 
    16 & 20 & 25 & 32 & 47 \\ \cline{2-13}
    & \multicolumn{2}{c|}{Identity} & 
    109 & 251 & 507 & 1019 & 1994 & 
    80 & 269 & 827 & 2690 & - \\ \hline
    \multirow{5}{*}{2} & \multirow{3}{*}{$A_0^{-1}$} & GMG & 
    20 & 22 & 23 & 23 & 23 & 
    32 & 32 & 33 & 34 & 35 \\ \cline{3-13}
    \cline{3-13}
    & & BoomerAMG & 
    20 & 23 & 24 & 24 & 24 &
    33 & 33 & 35 & 35 & 36 \\ \cline{3-13}        
    & & DirectSolver & 
    20 & 21 & 23 & 23 & 23 &
    32 & 32 & 32 & 33 & 33 \\ \cline{2-13}        
    & \multicolumn{2}{c|}{BoomerAMG} & 
    20 & 25 & 32 & 41 & 63 &
    24 & 26 & 31 & 39 & 56 \\ \cline{2-13}
    & \multicolumn{2}{c|}{Identity} & 
    274 & 548 & 1067 & 2071 & - &
    173 & 432 & 994 & 2722 & - \\ \hline
    \multirow{5}{*}{3} & \multirow{3}{*}{$A_0^{-1}$} & GMG & 
    44 & 45 & 45 & 46 & 46 & 
    49 & 51 & 52 & 53 & 54 \\
    \cline{3-13}
    & & BoomerAMG & 
    43 & 45 & 45 & 46 & 46 &
    48 & 51 & 51 & 51 & 52 \\ \cline{3-13}
    & & DirectSolver &
    41 & 43 & 44 & 44 & 44 & 
    48 & 50 & 49 & 49 & 50 \\ \cline{2-13}        
    & \multicolumn{2}{c|}{BoomerAMG} &
    32 & 35 & 42 & 53 & 70 & 
    35 & 38 & 41 & 49 & 64 \\ \cline{2-13}
    & \multicolumn{2}{c|}{Identity} & 
    411 & 806 & 1598 & - & - &
    238 & 899 & 2345 & - & - \\ \hline
    \multirow{5}{*}{4} & \multirow{3}{*}{$A_0^{-1}$} & GMG & 
    68 & 72 & 73 & 74 & 75 &
    60 & 62 & 69 & 74 & 77 \\ \cline{3-13}
    & & BoomerAMG & 
    63 & 68 & 70 & 71 & 71 & 
    59 & 61 & 66 & 71 & 72 \\ \cline{3-13}
    & & DirectSolver & 
    61 & 66 & 68 & 69 & 68 & 
    59 & 61 & 66 & 71 & 73 \\ \cline{2-13}        
    & \multicolumn{2}{c|}{BoomerAMG} & 
    60 & 63 & 76 & 86 & 102 & 
    62 & 67 & 76 & 91 & 105 \\ \cline{2-13}
    & \multicolumn{2}{c|}{Identity} & 
    588 & 1264 & 2528 & - & - &
    290 & 1390 & - & - & - \\
    \hline\hline
  \end{tabular}
  }}
  \caption{The convergence steps for symmetric and nonsymmetric
  methods in Example 4.}
  \label{tab_ex4_steps}
\end{table}

\begin{table}[htbp]
  \centering
  \renewcommand{\arraystretch}{1.15}
  \scalebox{0.75}{
  \revise{}{
  \begin{tabular}{p{0.2cm}|p{0.5cm}|p{2.5cm}|p{1.05cm}|p{1.05cm}|p{1.05cm}|p{1.05cm} | p{1.05cm}||
    p{1.05cm}|p{1.05cm}|p{1.05cm}|p{1.05cm}|p{1.05cm}}
    \hline\hline
    \multirow{2}{*}{$m$} & \multicolumn{2}{c|}{\diagbox[width=3.5cm]{\small Preconditioner}{$1/h$}} & 10 & 20 & 40 & 80 & 
    160 & 10 & 20 & 40 & 80 & 160 \\ \cline{2-13}
    & \multicolumn{2}{c|}{$\theta$} & \multicolumn{5}{c||}{-1:symmetric } 
    & \multicolumn{5}{c}{1:nonsymmetric} \\ \hline
    \multirow{5}{*}{1} & \multirow{3}{*}{$A_0^{-1}$} & GMG & 
     0.002 & 0.010 & 0.036 & 0.160 & 0.744 
    & 0.002 & 0.011 & 0.044 & 0.200 & 0.982 \\ 
    \cline{3-13}
    & & BoomerAMG & 
    0.005 & 0.017 & 0.063 & 0.282 & 1.229 &
    0.007 & 0.022 & 0.080 & 0.345 & 1.559
    \\ \cline{3-13}        
    & & DirectSolver & 
    0.002 & 0.008 & 0.036 & 0.255 & 1.396 &
    0.002 & 0.008 & 0.042 & 0.301 & 1.736 
    \\\cline{2-13}        
    & \multicolumn{2}{c|}{BoomerAMG} & 
    0.010 & 0.036 & 0.156 & 0.962 & 5.616 & 
    0.018 & 0.063 & 0.268 & 1.195 & 7.347 \\ \cline{2-13}        
    & \multicolumn{2}{c|}{Identity} &
    0.015 & 0.096 & 0.753 & 6.025 & 58.90
    & 0.009 & 0.065 & 0.614 & 8.416 & - \\\hline
    \multirow{5}{*}{2} & \multirow{3}{*}{$A_0^{-1}$} & GMG &
    0.004 & 0.013 & 0.057 & 0.256 & 1.112 &
    0.004 & 0.018 & 0.075 & 0.352 & 1.497 \\ \cline{3-13}
    & & BoomerAMG & 
    0.008 & 0.022 & 0.085 & 0.376 & 1.603 &
    0.012 & 0.028 & 0.126 & 0.533 & 2.219 \\\cline{3-13}        
    & & DirectSolver & 
    0.004 & 0.012 & 0.056 & 0.423 & 1.923 & 
    0.004 & 0.015 & 0.075 & 0.493 & 2.765 \\\cline{2-13}        
    & \multicolumn{2}{c|}{BoomerAMG} & 
    0.016 & 0.069 & 0.336 & 1.851 & 12.22 &
    0.016 & 0.065 & 0.300 & 1.734 & 10.77 \\ \cline{2-13}        
    & \multicolumn{2}{c|}{Identity} & 
    0.026 & 0.187 & 1.530 & 13.60 & - &
    0.013 & 0.142 & 1.341 & 20.23 & - \\\hline
    \multirow{5}{*}{3} & \multirow{3}{*}{$A_0^{-1}$} & GMG &
    0.020 & 0.036 & 0.148 & 0.662 & 2.833 &
    0.007 & 0.032 & 0.141 & 0.690 & 2.955 \\ \cline{3-13}
    & & BoomerAMG &
    0.013 & 0.049 & 0.187 & 0.820 & 3.512 &
    0.013 & 0.059 & 0.203 & 0.854 & 3.650 \\ \cline{3-13}
    & & DirectSolver & 
    0.009 & 0.032 & 0.150 & 0.862 & 4.479 & 
    0.006 & 0.027 & 0.132 & 1.093 & 4.625 \\\cline{2-13}        
    & \multicolumn{2}{c|}{BoomerAMG} & 
    0.028 & 0.125 & 0.610 & 3.669 & 18.85 & 
    0.030 & 0.130 & 0.582 & 3.933 & 17.05 \\ \cline{2-13}        
    & \multicolumn{2}{c|}{Identity} & 
    0.058 & 0.427 & 3.529 & - & -  &
    0.022 & 0.352 & 3.823 & - & - \\\hline
    \multirow{5}{*}{4} & \multirow{3}{*}{$A_0^{-1}$} & GMG &
    0.019 & 0.077 & 0.324 & 1.546 & 6.388 & 
    0.012 & 0.049 & 0.226 & 1.130 & 5.300  \\ \cline{3-13}
    & & BoomerAMG & 
    0.024 & 0.089 & 0.370 & 1.567 & 7.229 &
    0.018 & 0.066 & 0.304 & 1.649 & 7.544 \\ \cline{3-13}
    & & DirectSolver & 
    0.016 & 0.068 & 0.327 & 1.856 & 8.563 & 
    0.016 & 0.043 & 0.237 & 1.460 & 7.847 \\\cline{2-13}        
    & \multicolumn{2}{c|}{BoomerAMG} & 
    0.060 & 0.281 & 1.603 & 8.587 & 42.99 &
    0.063 & 0.286 & 1.390 & 8.981 & 42.58 \\ \cline{2-13}        
    & \multicolumn{2}{c|}{Identity} & 
    0.120 & 1.027 & 8.490 & - & - &
    0.038 & 0.740 & - & - & - \\
    \hline\hline
  \end{tabular}}}
  \caption{The CPU times of solving linear systems for symmetric and
  nonsymmetric methods in Example 4.}
  \label{tab_ex4_times}
\end{table}
\begin{table}
  \centering
  \renewcommand{\arraystretch}{1.25}
  \scalebox{0.75}{
  \revise{}{
  \begin{tabular}{p{0.3cm}| p{1.0cm} | p{7.0cm} |  p{1cm} | p{1cm} |
    p{1cm}  }
    \hline\hline
    $m$ & & $h$ & $1/20$ & $1/40$ &  $1/80$  \\
    \hline
    \multirow{5}{*}{$1$} & \multirow{3}{*}{RDA} & 
    solve least squares problems & 
    0.015 &  0.061 & 0.251 \\
    \cline{3-6}
    & & assemble the stiffness matrix & 
    0.155  & 0.636 & 2.626 \\ 
    \cline{3-6}
    & & solve the linear system by CG + $A_0^{-1}$  &
    0.010 & 0.036 & 0.160 \\ 
    \cline{2-6}
    & \multirow{2}{*}{DGM} &   assemble
    the stiffness matrix &
    0.052  & 0.206 & 0.826 \\ 
    \cline{3-6}
    & & solve the linear system by CG + AMG & 
    0.076 & 0.313 & 1.662 \\ 
    \hline
    \multirow{5}{*}{$2$} & \multirow{3}{*}{RDA} & 
    solve least squares problems &
    0.047 &  0.181 & 0.739 \\
    \cline{3-6}
    & & assemble the stiffness matrix &
    0.578  & 2.332 & 9.378 \\ 
    \cline{3-6}
    & & solve the linear system by CG + $A_0^{-1}$ &
    0.013 & 0.057 & 0.256 \\ 
    \cline{2-6}
    & \multirow{2}{*}{DGM} &   assemble
    the stiffness matrix &
    0.287  & 1.150 & 4.638 \\ 
    \cline{3-6}
    & & solve the linear system by CG + AMG &
    0.272 & 1.883 & 11.79 \\ 
    \hline
    \multirow{5}{*}{$3$} & \multirow{3}{*}{RDA} & 
    solve least squares problems & 
    0.113 &  0.463 & 1.861 \\
    \cline{3-6}
    & & assemble the stiffness matrix & 
    1.092  & 4.369 & 17.37 \\ 
    \cline{3-6}
    & & solve the linear system by CG + $A_0^{-1}$ & 
    0.036 & 0.148 & 0.662 \\ 
    \cline{2-6}
    & \multirow{2}{*}{DGM} & assemble the stiffness matrix & 
    0.568  & 2.280 & 9.576 \\ 
    \cline{3-6}
    & & solve the linear system by CG + AMG &
    0.593 & 4.372 & 29.06 \\ 
    \hline
    \multirow{5}{*}{$4$} & \multirow{3}{*}{RDA} & 
    solve least squares problems & 
    0.265 &  1.066 & 4.260 \\
    \cline{3-6}
    & & assemble the stiffness matrix & 
    2.392  & 9.301 & 37.23 \\ 
    \cline{3-6}
    & & solve the linear system by CG + $A_0^{-1}$& 
    0.077 & 0.324 & 1.546 \\ 
    \cline{2-6}
    & \multirow{2}{*}{DGM} &   
    assemble the stiffness matrix & 
    1.382  & 5.391 & 20.25 \\ 
    \cline{3-6}
    & & solve the linear system  by CG + AMG&
    1.723 & 12.29 & 67.29 \\ 
    \hline\hline
  \end{tabular}}}
  \caption{The CPU times for all steps in Example 4.}
  \label{tab_ex4_alltime}
\end{table}

\begin{figure}[htbp]
  \centering
  \includegraphics[width=0.35\textwidth]{./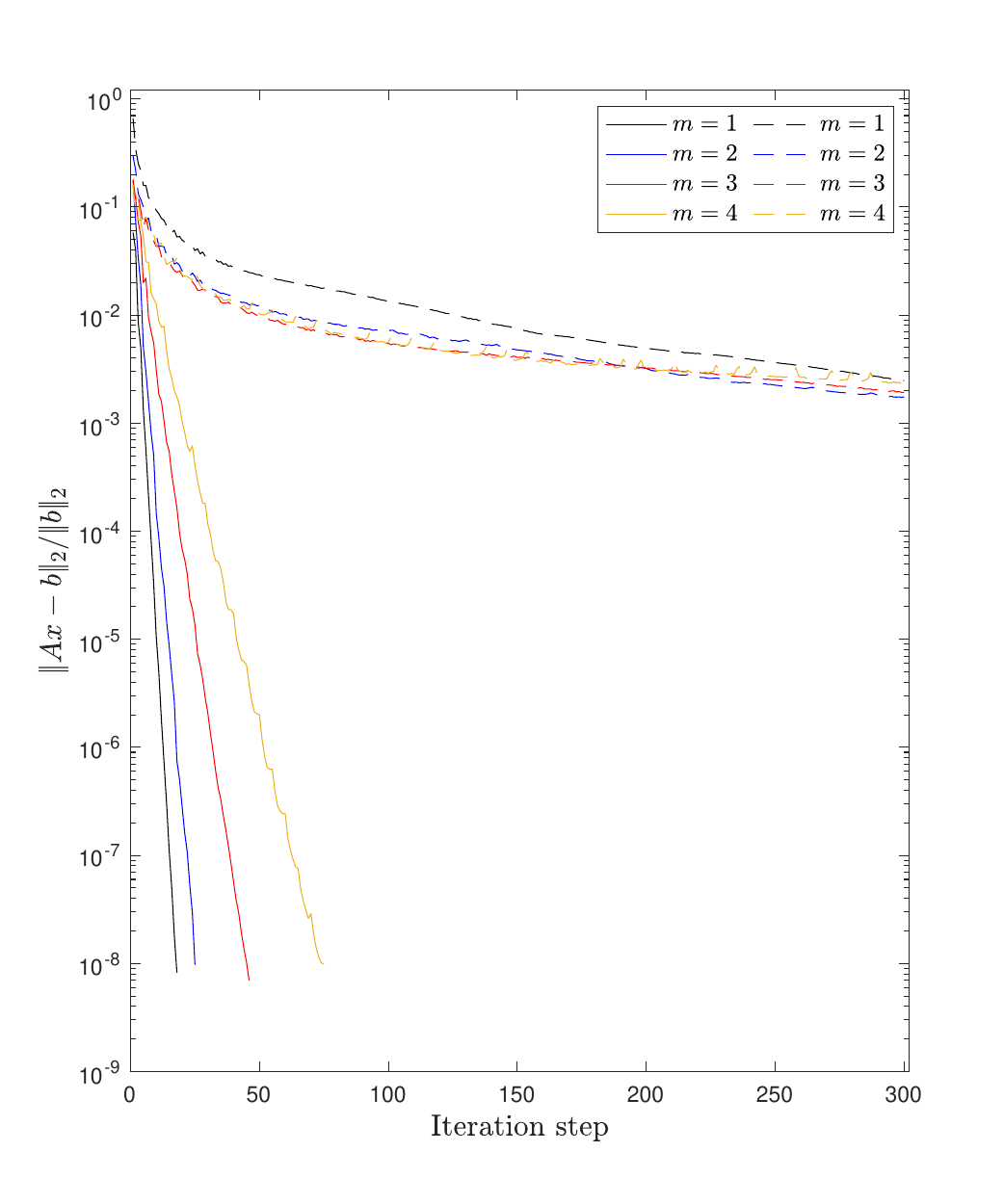}
  \hspace{20pt}
  \includegraphics[width=0.35\textwidth]{./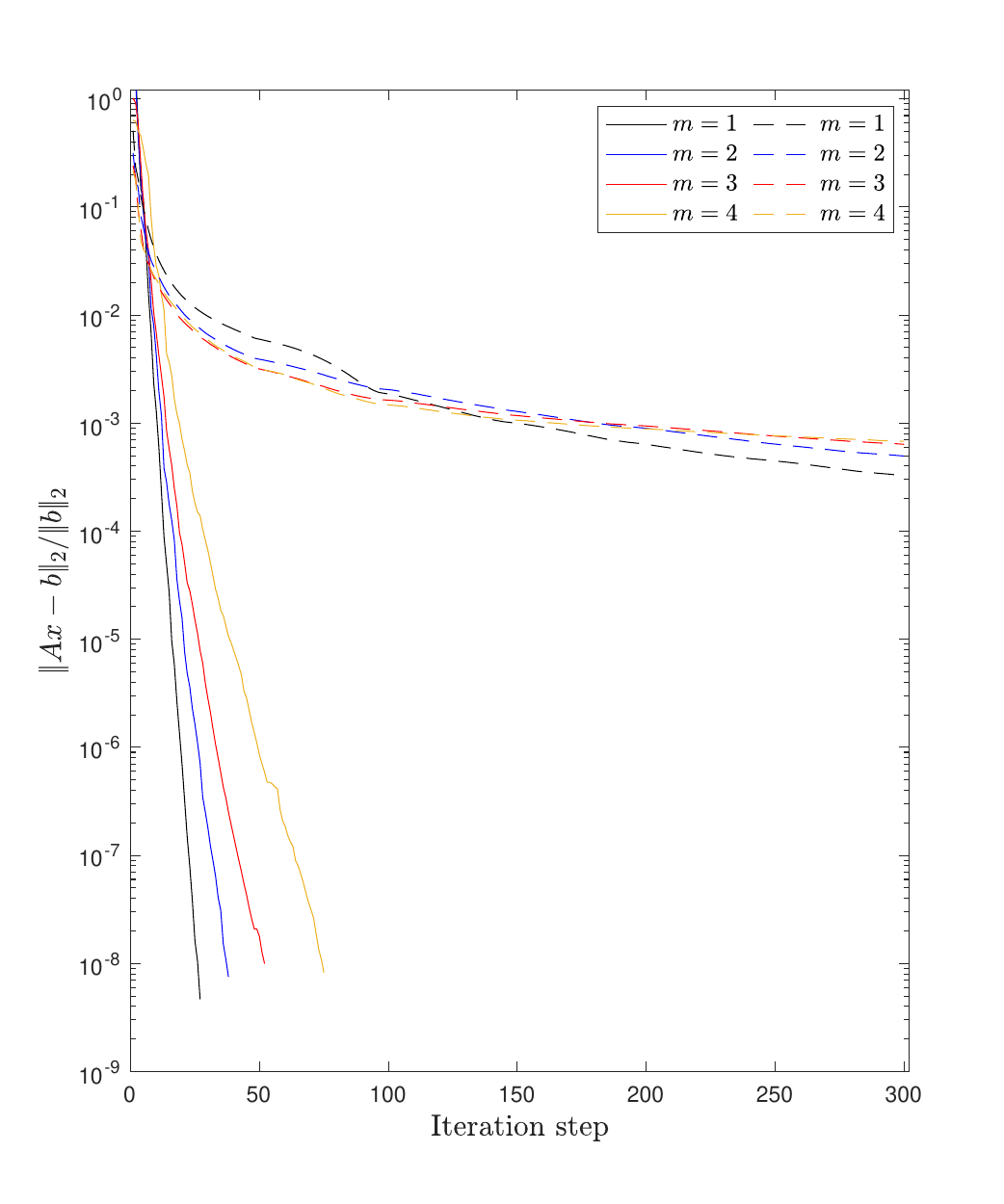}
  \caption{The convergence histories of CG (left) /GMRES (right) solvers on the mesh $h =
  1/160$ (solid line: CG/GMRES solver with preconditioner $A_0^{-1}$,
  dashed line: standard CG/GMRES solver).}
  \label{fig_ex4_relative_error}
\end{figure}

\paragraph{\textbf{Example 5.}}
In this example, we numerically solve the problem defined on $\Omega =
(-1, 1)^2$ with a series of polygonal meshes. The polygonal meshes are
generated by the package PolyMesher \cite{talischi2012polymesher},
which contain different types of polygon elements. The exact solution
is
\begin{displaymath}
  u(x,y) = e^{x^2 + y^2} \sin(xy).
\end{displaymath}
Because the polygonal meshes are not nested, only the algebraic
multigrid method and the direct method are used to approximate
$A_0^{-1}$.
The numerical results
are shown in Tab.~\ref{tab_ex5_steps} and Tab.~\ref{tab_ex5_times}.
Clearly, the results demonstrate that the proposed method can work
very well for elements of complex geometries and has a great
efficiency on solving the linear system.

\begin{table}[htbp]
  \centering
  \renewcommand{\arraystretch}{1.15}
  \scalebox{0.80}{
  \revise{}{
  \begin{tabular}{p{0.25cm}|p{0.5cm}|p{2.5cm}|p{0.85cm}|p{0.85cm}|p{0.85cm}|p{0.85cm}
    |p{0.85cm}||p{0.85cm}|p{0.85cm}|p{0.85cm}|p{0.85cm}|p{0.85cm}}
    \hline\hline
    \multirow{2}{*}{$m$} & \multicolumn{2}{c|}{\diagbox[width=3.85cm]{Preconditioner}{$1/h$}} & 10 & 20 & 40 & 80 & 160 
    & 10 & 20 & 40 & 80 & 160 \\ \cline{2-13}
    & \multicolumn{2}{c|}{$\theta$} & \multicolumn{5}{c||}{-1: symmetric}  & \multicolumn{5}{c}{1: nonsymmetric} \\ \hline
    \multirow{4}{*}{1} & \multirow{2}{*}{$A_0^{-1}$} & BoomerAMG & 
    24 & 27 & 29 & 30 & 31 &
    19 & 20 & 21 & 21 & 21 \\
    \cline{3-13}        
    & & DirectSolver & 
    23 & 26 & 28 & 28 & 29 & 
    19 & 20 & 21 & 21 & 21 \\ \cline{2-13}        
    & \multicolumn{2}{c|}{BoomerAMG} & 
    13 & 15 & 25 & 32 & 52 & 
    13 & 15 & 23 & 28 & 39 \\ \cline{2-13}        
    & \multicolumn{2}{c|}{Identity} &
    130 & 325 & 623 & 1230 & 2368 & 
    134 & 302 & 592 & 1406 & - \\ \hline
    \multirow{4}{*}{2} & \multirow{2}{*}{$A_0^{-1}$} & BoomerAMG & 
    36 & 41 & 44 & 46 & 47 &
    25 & 29 & 31 & 31 & 31 \\
    \cline{3-13}        
    & & DirectSolver &
    36 & 40 & 43 & 44 & 44 & 
    26 & 29 & 31 & 31 & 31 \\ \cline{2-13}        
    & \multicolumn{2}{c|}{BoomerAMG} & 
    17 & 21 & 28 & 52 & 82 & 
    17 & 21 & 27 & 43 & 66 \\ \cline{2-13}        
    & \multicolumn{2}{c|}{Identity} & 
    150 & 415 & 800  & 1608 & -  & 
    148  & 446 & 825 & 1880 & - \\ \hline
    \multirow{4}{*}{3} & \multirow{2}{*}{$A_0^{-1}$} & BoomerAMG & 
    57 & 64 & 69 & 74 & 75 &
    33 & 37 & 40 & 40 & 41 \\
    \cline{3-13}        
    & & DirectSolver & 
    56 & 63 & 67 & 71 & 71 & 
    33 & 37 & 41 & 40 & 41 \\ \cline{2-13}        
    & \multicolumn{2}{c|}{BoomerAMG} & 
    33 & 42 & 49 & 62 & 98 & 
    32 & 39 & 46 & 59 & 74 \\ \cline{2-13}        
    & \multicolumn{2}{c|}{Identity} & 
    228 & 614 & 1213 & 2509 & - & 
    180  & 543 & 1110 & 2793 & - \\ \hline
    \multirow{4}{*}{4} & \multirow{2}{*}{$A_0^{-1}$} & BoomerAMG &
    76 & 91 & 101 & 116 & 119 & 
    38 & 44 & 53 & 57 & 59 \\
    \cline{3-13}        
    & & DirectSolver & 
    75 & 89 & 99 & 111 & 113 & 
    39 & 44 & 53 & 57 & 59 \\ \cline{2-13}        
    & \multicolumn{2}{c|}{BoomerAMG} & 
    62 & 75 & 95 & 116 & 156 & 
    55 & 72 & 89 & 105 & 129 \\ \cline{2-13}        
    & \multicolumn{2}{c|}{Identity} & 
    308 & 977 & 2168 & - & - & 
    186 & 787 & 1702 & - & - \\
    \hline\hline
  \end{tabular}
  }}
  \caption{The convergence steps for symmetric and nonsymmetric methods in Example 5.}
  \label{tab_ex5_steps}
\end{table}

\begin{table}[htbp]
  \centering
  \renewcommand{\arraystretch}{1.15}
  \scalebox{0.75}{
  \revise{}{
  \begin{tabular}{p{0.2cm}|p{0.5cm}|p{2.5cm}|p{1.05cm}|p{1.05cm}|p{1.05cm}|p{1.05cm} | p{1.05cm}||
    p{1.05cm}|p{1.05cm}|p{1.05cm}|p{1.05cm}|p{1.05cm}}
    \hline\hline
    \multirow{2}{*}{$m$} & \multicolumn{2}{c|}{\diagbox[width=3.5cm]{\small Preconditioner}{$1/h$}} & 10 & 20 & 40 & 80 
    & 160 & 10 & 20 & 40 & 80 & 160 \\ \cline{2-13}
    & \multicolumn{2}{c|}{$\theta$} & \multicolumn{5}{c||}{-1:symmetric}  & \multicolumn{5}{c}{1:nonsymmetric} \\ \hline
    \multirow{4}{*}{1} & \multirow{2}{*}{$A_0^{-1}$} & BoomerAMG & 
    0.002 & 0.009 & 0.033 & 0.131 & 0.555 &
    0.002 & 0.006 & 0.022 & 0.099 & 0.466 \\ \cline{3-13}
    & & DirectSolver & 
    0.001 & 0.003 & 0.014 & 0.067 & 0.516 & 
    0.001 & 0.002 & 0.010 & 0.047 & 0.317 \\\cline{2-13}        
    & \multicolumn{2}{c|}{BoomerAMG} & 
    0.002 & 0.008 & 0.051 & 0.280 & 1.861 & 
    0.003 & 0.011 & 0.056 & 0.227 & 1.682 \\ \cline{2-13}        
    & \multicolumn{2}{c|}{Identity} & 
    0.003 & 0.023 & 0.170 & 1.401 & 13.01 &
    0.004 & 0.036 & 0.288 & 2.979 & - \\ \hline
    \multirow{4}{*}{2} & \multirow{2}{*}{$A_0^{-1}$} & BoomerAMG & 
    0.003 & 0.016 & 0.052 & 0.239 & 1.197 & 
    0.003 & 0.010 & 0.038 & 0.151 & 0.709 \\ \cline{3-13}        
    & & DirectSolver & 
    0.003 & 0.008 & 0.032 & 0.151 & 0.979 & 
    0.001 & 0.004 & 0.018 & 0.082 & 0.555 \\ \cline{2-13}        
    & \multicolumn{2}{c|}{BoomerAMG} & 
    0.003 & 0.016 & 0.087 & 0.602 & 5.321 &
    0.005 & 0.018 & 0.087 & 0.560 & 4.171 \\ \cline{2-13}        
    & \multicolumn{2}{c|}{Identity} & 
    0.004 & 0.049 & 0.396 & 3.388 & - & 
    0.005 & 0.061 & 0.502 & 5.734 & - \\ \hline        
    \multirow{4}{*}{3} & \multirow{2}{*}{$A_0^{-1}$} & BoomerAMG & 
    0.005 & 0.023 & 0.090 & 0.403 & 1.793 &
    0.003 & 0.012 & 0.045 & 0.187 & 0.825 \\ \cline{3-13}
    & & DirectSolver & 
    0.002 & 0.013 & 0.059 & 0.304 & 1.789 & 
    0.002 & 0.006 & 0.026 & 0.132 & 0.836 \\\cline{2-13}        
    & \multicolumn{2}{c|}{BoomerAMG} & 
    0.007 & 0.033 & 0.174 & 0.988 & 7.758 & 
    0.007 & 0.034 & 0.169 & 0.912 & 6.984 \\ \cline{2-13}        
    & \multicolumn{2}{c|}{Identity} & 
    0.009 & 0.096 & 0.796 & 6.940 & - & 
    0.007 & 0.087 & 0.769 & 8.809 & - \\\hline
    \multirow{4}{*}{4} & \multirow{2}{*}{$A_0^{-1}$} & 
    BoomerAMG & 
    0.007 & 0.035 & 0.155 & 0.779 & 3.911 &
    0.004 & 0.015 & 0.066 & 0.330 & 1.674 \\ \cline{3-13}
    & & DirectSolver & 
    0.005 & 0.024 & 0.129 & 0.769 & 4.888 & 
    0.002 & 0.010 & 0.046 & 0.273 & 1.774 \\\cline{2-13}        
    & \multicolumn{2}{c|}{BoomerAMG} & 
    0.015 & 0.084 & 0.428 & 2.651 & 19.66 &
    0.014 & 0.082 & 0.432 & 2.406 & 15.66 \\ \cline{2-13}        
    & \multicolumn{2}{c|}{Identity} & 
    0.014 & 0.226 & 2.149 & - & -  &
    0.008 & 0.149 & 1.490 & - & -  \\
    \hline\hline
  \end{tabular}}}
  \caption{The CPU times of solving linear system for symmetric and
  nonsymmetric methods in Example 5.}
  \label{tab_ex5_times}
\end{table}

\paragraph{\textbf{Example 6.}}
In this case, we test an elliptic problem on the domain $\Omega = (-1,
1)^2$ with the coefficient matrix 
\begin{displaymath}
  A = \left(
  \begin{array}{cc}
    3 & 0 \\
    0 & 0.1 \\
  \end{array}
  \right),
\end{displaymath}
and the exact solution is selected by
\begin{displaymath}
  u(x,y) = \sin(\frac{1}{3} x) + \cos(10 y).
\end{displaymath}
In this test, the penalty parameter in the symmetric scheme is taken
as $\mu = 6m^2 + 10$.
For such a problem, the convergence steps and CPU time of solving the
final linear systems are recorded in Tab.~\ref{tab_ex6_steps} and
Tab.~\ref{tab_ex6_times}, respectively. The preconditioned methods
still have a good numerical performance for both symmetric and
nonsymmetric interior penalty methods.

\begin{table}[htbp]
  \centering
  \renewcommand{\arraystretch}{1.15}
  \scalebox{0.8}{
  \revise{}{
  \begin{tabular}{p{0.25cm}|p{0.5cm}|p{2.5cm}|p{0.85cm}|p{0.85cm}|p{0.85cm}|p{0.85cm}
    |p{0.85cm}||p{0.85cm}|p{0.85cm}|p{0.85cm}|p{0.85cm}|p{0.85cm}}
    \hline\hline
    \multirow{2}{*}{$m$} & \multicolumn{2}{c|}{\diagbox[width=3.85cm]{Preconditioner}{$1/h$}} & 10 & 20 & 40 & 80 & 160 
    & 10 & 20 & 40 & 80 & 160 \\ \cline{2-13}
    & \multicolumn{2}{c|}{$\theta$} & \multicolumn{5}{c||}{-1: symmetric}  & \multicolumn{5}{c}{1: nonsymmetric} \\ \hline
    \multirow{5}{*}{1} & \multirow{3}{*}{$A_0^{-1}$} & GMG & 
    36 & 46 & 52 & 57 & 59 &
    40 & 48 & 57 & 62 & 66 \\ \cline{3-13}
    & & BoomerAMG & 
    37 & 46 & 54 & 58 & 60 &
    41 & 48 & 56 & 60 & 64 \\ \cline{3-13}
    & & DirectSolver & 
    36 & 44 & 52 & 56 & 60 & 
    39 & 47 & 54 & 59 & 62 \\ \cline{2-13}        
    & \multicolumn{2}{c|}{BoomerAMG} & 
    15 & 24 & 34 & 46 & 66 &
    16 & 19 & 28 & 43 & 69 \\ \cline{2-13}        
    & \multicolumn{2}{c|}{Identity} & 
    238 & 537 & 1036 & 2049 & - & 
    280 & 570 & 1098 & 2548 & -\\ \hline
    \multirow{5}{*}{2} & \multirow{3}{*}{$A_0^{-1}$} & GMG & 
    62 & 79 & 88 & 92 & 97 &
    68 & 81 & 89 & 93 & 96 \\
    \cline{3-13}
    & & BoomerAMG &
    61 & 78 & 89 & 95 & 100 &
    66 & 84 & 92 & 97 & 98 \\ \cline{3-13}
    & & DirectSolver & 
    57 & 76 & 86 & 93 & 96 &
    63 & 82 & 89 & 93 & 94 \\ \cline{2-13}        
    & \multicolumn{2}{c|}{BoomerAMG} & 
    39 & 55 & 67 & 92 & 126 & 
    36 & 48 & 55 & 67 & 90 \\ \cline{2-13}        
    & \multicolumn{2}{c|}{Identity} &
    338 & 691 & 1412 & 2758 & - & 
    297 & 711 & 1613 & - & - \\ \hline
    \multirow{5}{*}{3} & \multirow{3}{*}{$A_0^{-1}$} & GMG & 
    98 & 115 & 122 & 129 & 131 &
    99 & 118 & 125 & 129 & 129 \\
    \cline{3-13}
    & & BoomerAMG & 
    98 & 116 & 127 & 133 & 136 &
    106 & 121 & 131 & 133 & 133 \\ 
    \cline{3-13}        
    & & DirectSolver & 
    94 & 113 & 123 & 129 & 131 & 
    102  & 117 & 126 & 129 & 129 \\ \cline{2-13}        
    & \multicolumn{2}{c|}{BoomerAMG} & 
    44 & 65 & 78 & 99 & 130 & 
    46 & 62 & 70 & 82 & 105 \\ \cline{2-13}        
    & \multicolumn{2}{c|}{Identity} & 
    437 & 905 & 1786 & - & - &
    390 & 942 & 2229 & - & - \\ \hline
    \multirow{5}{*}{4} & \multirow{3}{*}{$A_0^{-1}$} & GMG &
    142 & 178 & 187 & 199 & 204 &
    129 & 166 & 184 & 200 & 206 \\ \cline{3-13}
    & & BoomerAMG & 
    122 & 161 & 183 & 197 & 203 & 
    131 & 168 & 191 & 205 & 210 \\ \cline{3-13}
    & & DirectSolver &
    122 & 157 & 176 & 192 & 198 & 
    129 & 163 & 186 & 200 & 208 \\ \cline{2-13}        
    & \multicolumn{2}{c|}{BoomerAMG} & 
    99 & 171 & 211 & 239 & 273 & 
    72 & 132 & 161 & 181 & 209 \\ \cline{2-13}        
    & \multicolumn{2}{c|}{Identity} &
    577 & 1301 & 2584 & - & - & 
    469 & 1062 & - & - & - \\
    \hline\hline
  \end{tabular}
  }}
  \caption{The convergence steps for symmetric and nonsymmetric methods in Example 6.}
  \label{tab_ex6_steps}
\end{table}

\begin{table}[htbp]
  \centering
  \renewcommand{\arraystretch}{1.15}
  \scalebox{0.75}{
  \revise{}{
  \begin{tabular}{p{0.2cm}|p{0.5cm}|p{2.5cm}|p{1.05cm}|p{1.05cm}|p{1.05cm}|p{1.05cm} | p{1.05cm}||
    p{1.05cm}|p{1.05cm}|p{1.05cm}|p{1.05cm}|p{1.05cm}}
    \hline\hline
    \multirow{2}{*}{$m$} & \multicolumn{2}{c|}{\diagbox[width=3.5cm]{\small Preconditioner}{$1/h$}} & 10 & 20 & 40 & 80 & 
    160 & 10 & 20 & 40 & 80 & 160 \\ \cline{2-13}
    & \multicolumn{2}{c|}{$\theta$} & \multicolumn{5}{c||}{-1:symmetric}  & \multicolumn{5}{c}{1:nonsymmetric} \\ \hline
    \multirow{5}{*}{1} & \multirow{3}{*}{$A_0^{-1}$} & GMG & 
    0.006 & 0.028 & 0.128 & 0.689 & 2.736 &
    0.028 & 0.046 & 0.148 & 0.653 & 2.969 \\ \cline{3-13}
    & & BoomerAMG & 
    0.010 & 0.046 & 0.200 & 0.913 & 3.812 & 
    0.011 & 0.042 & 0.213 & 0.925 & 3.992 \\ \cline{3-13}
    & & DirectSolver & 
    0.004 & 0.025 & 0.130 & 0.926 & 6.034 & 
    0.005 & 0.023 & 0.118 & 1.007 & 5.974 \\\cline{2-13}        
    & \multicolumn{2}{c|}{BoomerAMG} & 
    0.011 & 0.060 & 0.284 & 1.777 & 11.39 & 
    0.013 & 0.052 & 0.271 & 1.837 & 12.81 \\ \cline{2-13}        
    & \multicolumn{2}{c|}{Identity} & 
    0.020 & 0.183 & 1.408 & 15.19 & - &
    0.038 & 0.322 & 2.665 & 35.41 & - \\\hline
    \multirow{5}{*}{2} & \multirow{3}{*}{$A_0^{-1}$} & GMG & 
    0.011 & 0.053 & 0.242 & 1.172 & 4.993 &
    0.044 & 0.084 & 0.266 & 1.173 & 5.093 \\ \cline{3-13}
    & & BoomerAMG & 
    0.020 & 0.078 & 0.360 & 1.597 & 7.221 &
    0.018 & 0.084 & 0.387 & 1.639 & 7.562 \\ \cline{3-13}        
    & & DirectSolver & 
    0.009 & 0.047 & 0.245 & 1.692 & 10.20 & 
    0.009 & 0.047 & 0.236 & 1.943 & 12.32 \\\cline{2-13}        
    & \multicolumn{2}{c|}{BoomerAMG} &
    0.027 & 0.152 & 0.721 & 4.698 & 22.56 & 
    0.030 & 0.155 & 0.706 & 4.154 & 20.40 \\ \cline{2-13}        
    & \multicolumn{2}{c|}{Identity} &
    0.035 & 0.281 & 2.378 & 18.93 & - & 
    0.066 & 0.417 & 4.343 & - & - \\\hline
    \multirow{5}{*}{3} & \multirow{3}{*}{$A_0^{-1}$} & GMG &
    0.024 & 0.111 & 0.486 & 2.303 & 10.59 & 
    0.059 & 0.143 & 0.488 & 2.403 & 10.64 \\ \cline{3-13}
    & & BoomerAMG & 
    0.036 & 0.147 & 0.639 & 3.004 & 13.11 &
    0.031 & 0.117 & 0.690 & 2.912 & 13.71 \\ \cline{3-13}        
    & & DirectSolver & 
    0.020 & 0.103 & 0.514 & 3.826 & 15.56 & 
    0.018 & 0.083 & 0.436 & 3.769 & 15.20 \\\cline{2-13}        
    & \multicolumn{2}{c|}{BoomerAMG} & 
    0.041 & 0.217 & 1.074 & 7.294 & 41.81 & 
    0.042 & 0.199 & 0.960 & 5.696 & 36.81 \\ \cline{2-13}        
    & \multicolumn{2}{c|}{Identity} & 
    0.072 & 0.618 & 5.140 & - & - &
    0.065 & 0.705 & 7.763 & - & - \\ \hline
    \multirow{5}{*}{4} & \multirow{3}{*}{$A_0^{-1}$} & GMG &
    0.041 & 0.221 & 1.010 & 5.302 & 21.28 & 
    0.081 & 0.237 & 0.909 & 4.934 & 21.35  \\ \cline{3-13}
    & & BoomerAMG & 
    0.055 & 0.277 & 1.413 & 6.208 & 24.41 & 
    0.044 & 0.215 & 1.085 & 5.520 & 31.75 \\ \cline{3-13}
    & & DirectSolver & 
    0.033 & 0.185 & 1.012 & 6.038 & 27.45 & 
    0.026 & 0.143 & 0.866 & 5.854 & 27.10 \\\cline{2-13}        
    & \multicolumn{2}{c|}{BoomerAMG} & 
    0.109 & 0.788 & 4.583 & 25.08 & 115.20 & 
    0.089 & 0.703 & 3.865 & 20.66 & 89.13 \\ \cline{2-13}        
    & \multicolumn{2}{c|}{Identity} & 
    0.133 & 1.264 & 10.66 & - & - &
    0.092 & 0.956 & - & - & - \\
    \hline\hline
  \end{tabular}}}
  \caption{The CPU times of solving linear system for symmetric and
  nonsymmetric methods in Example 6.}
  \label{tab_ex6_times}
\end{table}

\noindent \textbf{Example 7.}
We solve a three-dimensional elliptic problem defined in the cubic
domain $\Omega = (0, 1)^3$.  \substitute{ with the analytic solution
\begin{displaymath}
  u(x, y, z) = \sin(x + y + z),
\end{displaymath}
}{The exact solution is selected to be the same as Example 2}.  
This problem is solved on a series of tetrahedral meshes
with the mesh size $h = 1/4, 1/8, 1/16, 1/32$. 
The numerical results are reported in Tab.~\ref{tab_ex7_steps}.
In three dimensions, the proposed
preconditioning method still has a fast convergence speed, and the
convergence steps for symmetric/nonsymmetric systems keep almost
unchanged as $h$ tends to zero.  This numerical observation validates
the estimates in Theorem \ref{th_A0Amcond}. 
The convergence histories for all accuracy $1 \leq m \leq 3$ are
plotted in Fig.~\ref{fig_ex7_relative_error}. The relative errors in
iterations decrease sharply for our method. 
The CPU times costed in solving
linear systems are displayed in Tab.~\ref{tab_ex7_times}.  Clearly,
our method is still efficient for problems in three dimensions.

\begin{table}[htbp]
  \centering
  \renewcommand{\arraystretch}{1.15}
  \scalebox{0.75}{
  \revise{}{
  \begin{tabular}{p{0.6cm}|p{0.5cm}|p{2.5cm}|p{0.8cm}|p{0.8cm}|p{0.8cm}|p{0.8cm}||p{0.8cm}|p{0.8cm}|p{0.8cm}|p{0.8cm}}
    \hline\hline
    \multirow{2}{*}{$m$} & \multicolumn{2}{c|}{\diagbox[width=3.85cm]{Preconditioner}{$1/h$}} &
    4 & 8 & 16 & 32 & 
    4 & 8 & 16 & 32 \\ \cline{2-11}
    & \multicolumn{2}{c|}{$\theta$} & \multicolumn{4}{c||}{-1:symmetric}  & \multicolumn{4}{c}{1:nonsymmetric} \\ \hline
    \multirow{5}{*}{1} & \multirow{3}{*}{$A_0^{-1}$} & GMG  &
    18 & 26 & 32 & 35 &
    23 & 27 & 29 & 30 \\ \cline{3-11}
    & & BoomerAMG  &
    18 & 25 & 32 & 36 &
    23 & 26 & 28 & 28 \\ \cline{3-11}  
    & & DirectSolver & 
    17 & 24 & 31 & 34 &
    23 & 26 & 27 & 28 \\ \cline{2-11}  
    & \multicolumn{2}{c|}{BoomerAMG} & 
    9 & 15 & 26 & 36 & 
    8 & 13 & 23 & 33  \\ \cline{2-11}        
    & \multicolumn{2}{c|}{Identity} & 
    155 & 399 & 788 & 1576 & 
    75  & 183 & 423 & 1333 \\ \hline
    \multirow{5}{*}{2} & \multirow{3}{*}{$A_0^{-1}$} & GMG &
    36 & 46 & 49 & 52 &
    40 & 44 & 46 & 47\\ \cline{3-11}
    & & BoomerAMG  & 
    36 & 46 & 48 & 51 & 
    40 & 43 & 43 & 44 \\ \cline{3-11}        
    & & DirectSolver &
    36 & 45 & 46 & 50 &
    40 & 43 & 44 & 44 \\ \cline{2-11}        
    & \multicolumn{2}{c|}{BoomerAMG} &
    18 & 28 & 42 & 66 & 
    18 & 26 & 40 & 62 \\ \cline{2-11}        
    & \multicolumn{2}{c|}{Identity} & 
    186 & 522 & 1044 & 2047 & 
    160 & 493 & 1093 & 2460 \\ \hline
    \multirow{5}{*}{3} & \multirow{3}{*}{$A_0^{-1}$} & GMG  & 
    61 & 89 & 93 & 96 & 
    48 & 57 & 64 & 67 \\ \cline{3-11}
    & & BoomerAMG  & 
    61 & 81 & 90 & 95 &
    47 & 55 & 61 & 62 \\ \cline{3-11}        
    & & DirectSolver  & 
    61 & 80 & 88 & 93 &
    48 & 56 & 61 & 63 \\ \cline{2-11}        
    & \multicolumn{2}{c|}{BoomerAMG} & 
    43 & 64 & 75 & 92 & 
    46 & 69 & 76 & 88  \\ \cline{2-11}        
    & \multicolumn{2}{c|}{Identity} & 
    277 & 812 & 1564 & - &
    201 & 684 & 1417 & - \\ 
    \hline\hline
  \end{tabular}}}
  \caption{The convergence steps for symmetric and nonsymmetric methods in Example 7.}
  \label{tab_ex7_steps}
\end{table}

\begin{table}[htbp]
  \centering
  \renewcommand{\arraystretch}{1.15}
  \scalebox{0.8}{
  \revise{}{
  \begin{tabular}{p{0.4cm}|p{0.5cm}|p{2.5cm}|p{0.9cm}|p{0.9cm}|p{0.9cm}|p{0.9cm}||p{0.9cm}|p{0.9cm}|p{0.9cm}|p{0.9cm}}
    \hline\hline
    \multirow{2}{*}{$m$} & \multicolumn{2}{c|}{\diagbox[width=3.5cm]{\small Preconditioner}{$1/h$}} & 4 & 8 & 16 & 32 & 
    4 & 8 & 16 & 32 \\ \cline{2-11}
    & \multicolumn{2}{c|}{$\theta$} & \multicolumn{4}{c||}{-1:symmetric}  & \multicolumn{4}{c}{1:nonsymmetric} \\ \hline
    \multirow{5}{*}{1} & \multirow{3}{*}{$A_0^{-1}$} & GMG  & 
    0.003 & 0.022 & 0.218 & 2.092 & 
    0.003 & 0.018 & 0.202 & 1.969 \\ \cline{3-11}
    & & BoomerAMG  & 
    0.003 & 0.038 & 0.500 & 6.832 & 
    0.004 & 0.026 & 0.306 & 4.045 \\ \cline{3-11}        
    & & DirectSolver  & 
    0.002 & 0.029 & 0.832 & 17.29 & 
    0.002 & 0.020 & 0.627 & 12.94\\ \cline{2-11}  
    & \multicolumn{2}{c|}{BoomerAMG} & 
    0.003 & 0.051 & 1.182 & 19.53 & 
    0.004 & 0.056 & 1.200 & 16.59  \\ \cline{2-11}        
    & \multicolumn{2}{c|}{Identity} & 
    0.008 & 0.192 & 4.691 & 77.80 & 
    0.004 & 0.107 & 2.999 & 102.25 \\ \hline
    \multirow{5}{*}{2} & \multirow{3}{*}{$A_0^{-1}$} & GMG & 
    0.007 & 0.067 & 0.659 & 5.684 & 
    0.006 & 0.038 & 0.424 & 4.029 \\
    \cline{3-11}
    & & BoomerAMG  & 
    0.010 & 0.089 & 0.969 & 8.216 &
    0.009 & 0.069 & 0.825 & 7.336 \\ \cline{3-11}        
    & & DirectSolver  &
    0.005 & 0.070 & 1.553 & 25.31 &
    0.004 & 0.040 & 1.001 & 22.94 \\ \cline{2-11}        
    & \multicolumn{2}{c|}{BoomerAMG} & 
    0.010 & 0.165 & 3.288 & 53.99 & 
    0.009 & 0.175 & 3.207 & 47.28  \\ \cline{2-11}        
    & \multicolumn{2}{c|}{Identity} & 
    0.018 & 0.529 & 10.88 & 141.10 &
    0.013 & 0.379 & 9.446 & 238.85 \\ \hline
    \multirow{5}{*}{3} & \multirow{3}{*}{$A_0^{-1}$} & GMG  &
    0.022 & 0.220 & 2.837 & 23.39 &
    0.019 & 0.165 & 1.705 & 16.52 \\ \cline{3-11}
    & & BoomerAMG  & 
    0.018 & 0.241 & 2.742 & 30.52 &
    0.015 & 0.185 & 1.826 & 21.15 \\ \cline{3-11}
    & & DirectSolver  &
    0.014 & 0.198 & 3.603 & 62.84 & 
    0.009 & 0.135 & 1.985 & 36.04  \\ \cline{2-11}        
    & \multicolumn{2}{c|}{BoomerAMG} & 
    0.035 & 0.750 & 12.54 & 139.98 & 
    0.039 & 0.861 & 12.22 & 121.88 \\ \cline{2-11}        
    & \multicolumn{2}{c|}{Identity} &
    0.050 & 1.709 & 42.10 & - & 
    0.027 & 1.057 & 21.84 & -  \\ 
    \hline\hline
  \end{tabular}}}
  \caption{The CPU times for symmetric and nonsymmetric methods in Example 7.}
  \label{tab_ex7_times}
\end{table}

\begin{figure}[htbp]
  \centering
  \includegraphics[width=0.35\textwidth]{./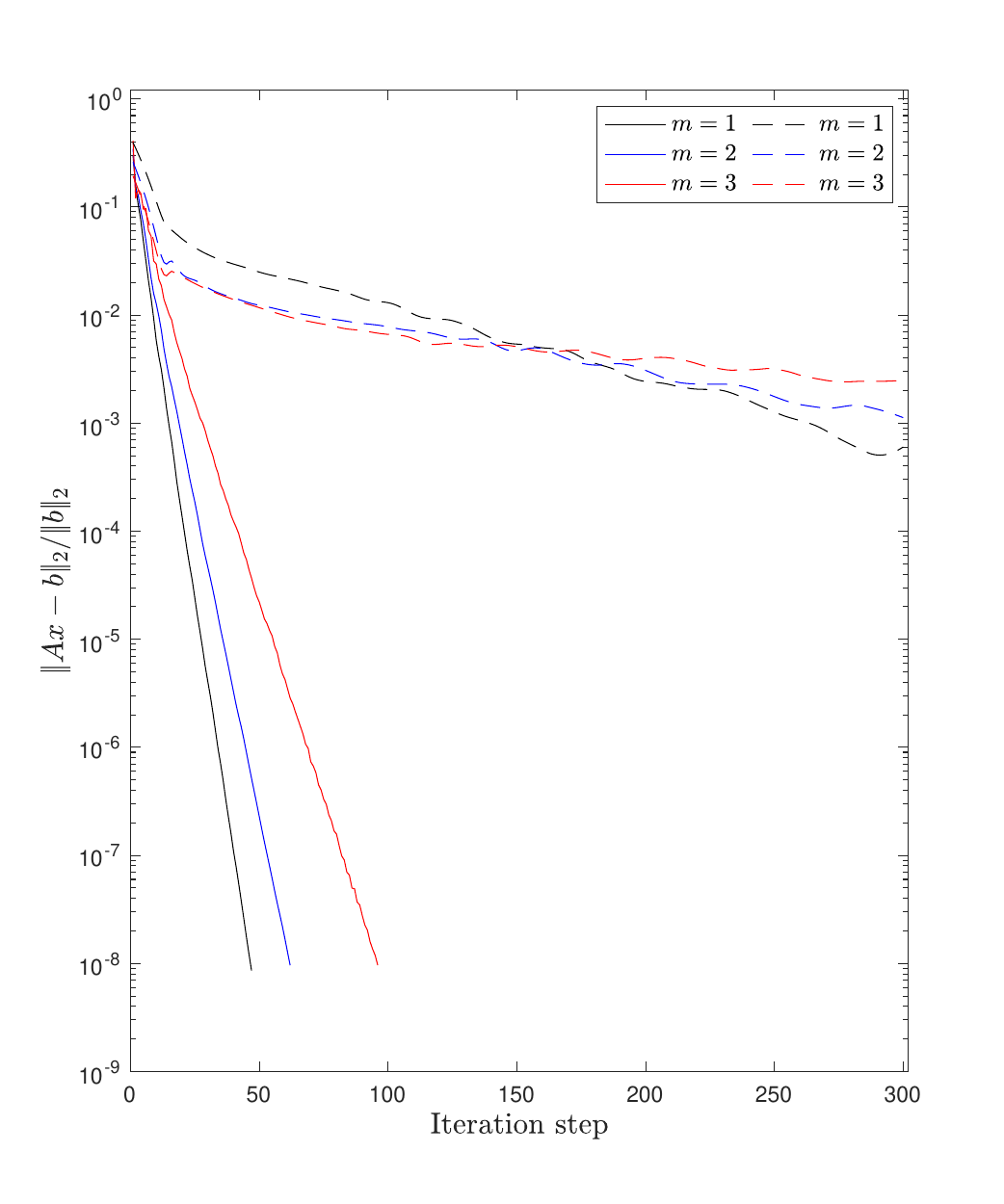}
  \hspace{20pt}
  \includegraphics[width=0.35\textwidth]{./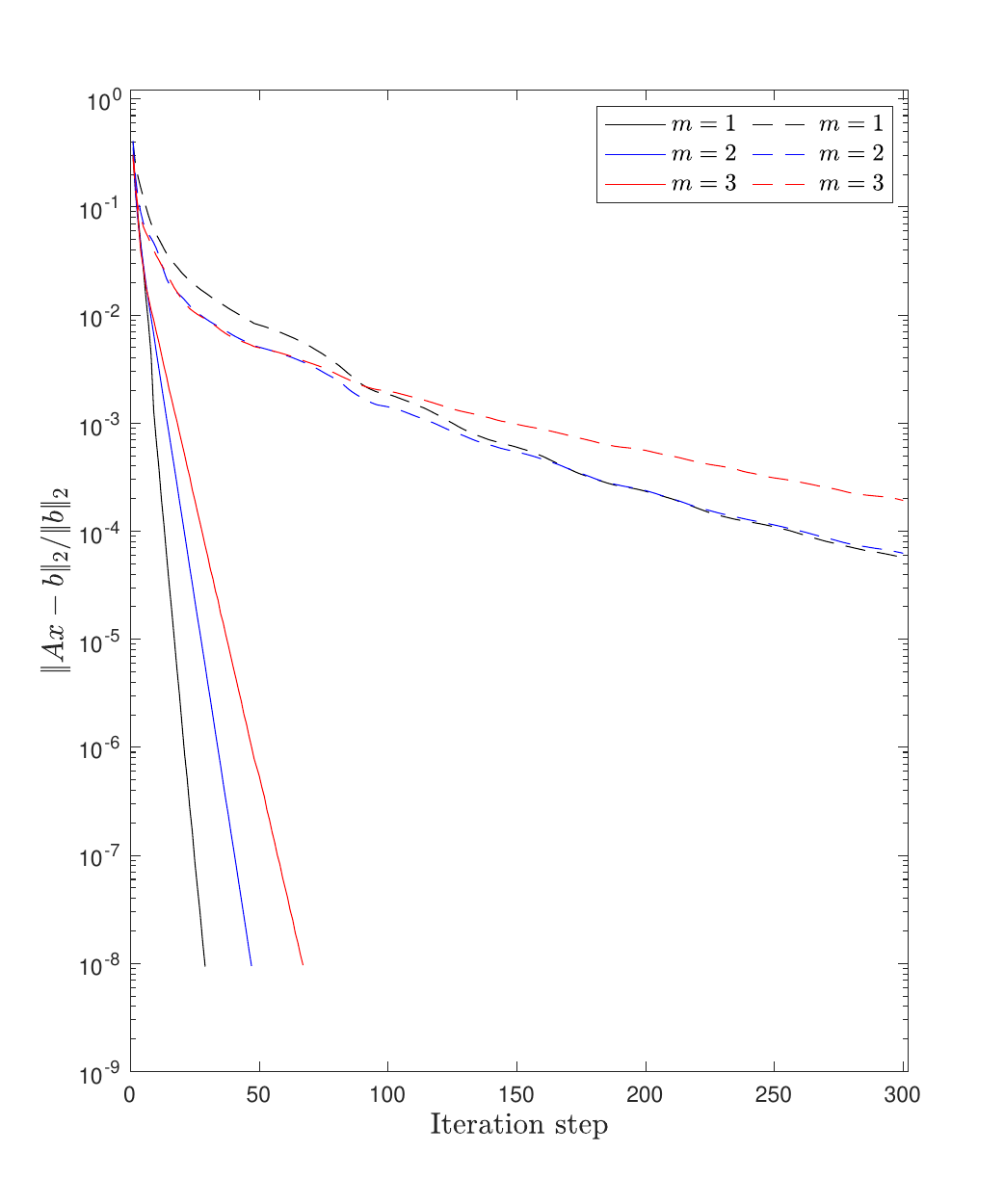}
  \caption{The convergence histories of CG (left) /GMRES (right) solvers on the mesh $h =
  1/32$ (solid line: CG/GMRES solver with preconditioner $A_0^{-1}$,
  dashed line: standard CG/GMRES solver).}
  \label{fig_ex7_relative_error}
\end{figure}


\section{Conclusions}
\label{sec_conclusion}
In this paper, we proposed a preconditioned interior penalty method
for the elliptic problem with the reconstructed discontinuous
approximation.  The schemes are derived under the
symmetric/nonsymmetric interior penalty DG methods. We constructed a
preconditioner from the piecewise constant function and the
preconditioned system is shown to be optimal. Numerical experiments
demonstrated the efficiency on both the approximation by the
reconstructed space and the iterative method for the preconditioned
system. 

\section*{Acknowledgements} 
The authors would like to thank the anonymous referees
sincerely for their constructive comments that improve the quality of
this paper. 
This research was supported by National Natural Science Foundation of
China (12201442, 12288101).

\section*{Data Availability} 
The author declares that all data
supporting the findings of this study are available within this
particle.

\section*{Declarations}
The author has no relevant financial or non-financial interests to
disclose.


\bibliographystyle{amsplain}
\bibliography{../ref}

\end{document}